\definecolor{links}{rgb}{0.5,0.2,0}
\definecolor{green}{rgb}{0.2,0.5,0.2}
\newcommand{\avg}[1]{\left\langle #1 \right \rangle}
\newcommand{\abs}[1]{\left\vert #1 \right \vert}
\newcommand{\norm}[1]{\left\Vert #1 \right \Vert}
\newcommand{\brk}[1]{\left[ #1 \right]}
\DeclareMathOperator{\BMO}{{BMO}}
\DeclareMathOperator{\BMOA}{{BMOA}}
\DeclareMathOperator{\VMOA}{{VMOA}}
\newcommand{\B}{\mathrm{B}}
\newcommand{\A}{\mathrm{A}}
\newcommand{\D}{\mathrm{D}}
\newcommand{\RH}{\mathrm{RH}}
\newcommand{\BP}{\Pi}
\newcounter{theorems}
\newcounter{other}
\numberwithin{other}{section}
\newtheorem{proposition}[other]{Proposition}
\newtheorem{theorem}[theorems]{Theorem}
\newtheorem*{theorem*}{Theorem}
\newtheorem*{proposition*}{Proposition}
\newtheorem*{corollary*}{Corollary}
\numberwithin{cor}{theorems}
\newtheorem{lemma}[other]{Lemma}
\theoremstyle{definition}
\newtheorem{remark}[other]{Remark}
\title[Weighted estimates on planar domains]{Weighted estimates for the Bergman projection on planar domains}
\author[A. W. Green]{A. Walton Green}
\thanks{A. W. Green's research partially supported by NSF grant DMS-2202813. }
\address{A. Walton Green \hfill\break\indent 
Department of Mathematics\hfill\break\indent 
Washington University in Saint Louis\hfill\break\indent 
1 Brookings Drive\hfill\break\indent 
Saint Louis, MO 63130, USA}
\email{\href{mailto:awgreen@wustl.edu}{\textnormal{awgreen@wustl.edu}}}
\author[N. A. Wagner]{Nathan A. Wagner}
\thanks{N. A. Wagner's research partially supported by NSF grant DMS-2203272. }
\address{ Nathan A. Wagner \hfill\break\indent 
Department of Mathematics \hfill\break\indent 
 Brown University \hfill\break\indent 
 151 Thayer Street\hfill\break\indent 
 Providence, RI, 02912 USA}
\email{\href{mailto:nathan_wagner@brown.edu}{\textnormal{nathan\_wagner@brown.edu}}}
\subjclass[2020]{Primary: 30H20, 42B20. Secondary: 30C20}
\keywords{Bergman projection, planar domains, weighted estimates, Riemann map, weak-type estimates}
\numberwithin{equation}{section}
\begin{document}

\begin{abstract}
We investigate weighted Lebesgue space estimates for the Bergman projection on a simply connected planar domain via the domain's Riemann map. We extend the bounds which follow from a standard change-of-variable argument in two ways. First, we provide a regularity condition on the Riemann map, which turns out to be necessary in the case of uniform domains, in order to obtain the full range of weighted estimates for the Bergman projection for weights in a B\'{e}koll\`{e}-Bonami-type class. Second, by slightly strengthening our condition on the Riemann map, we obtain the weighted weak-type (1,1) estimate as well. Our proofs draw on techniques from both conformal mapping and dyadic harmonic analysis.
\end{abstract}

\maketitle

\section{Introduction}
Let $\Omega$ be a bounded, simply connected domain in $\mathbb C$. A fundamental object of study in complex analysis is the Bergman projection $\BP_{\Omega}$, which is the orthogonal projection from $L^2(\Omega)$  (the Lebesgue space with respect to the Lebesgue area measure $dA$ restricted to $\Omega$)   to the closed subspace of holomorphic functions. Unweighted and weighted $L^p$ estimates for $\BP_{\Omega}$ have close connections to duality of Bergman spaces,   theory of conformal mappings , and complex partial differential equations. They have been investigated by many authors, particularly in the case $\Omega= \mathbb{D}.$ See references   \cites{bekolle1982, bekollebonami1978, borichev04, hedenmalm, lanzani-stein-2004, PR} for some fundamental results   pertaining to the regularity of the Bergman projection.

For each interval $I \subset \mathbb T$, with center $e^{\mathrm{i} \theta_0}$, the associated Carleson box is 
	\begin{equation}\label{e:QI} Q_I=\left\{re^{\mathrm{i} \theta} \in \mathbb{D}: 1-r \leq \ell(I), |\theta-\theta_0| \leq \frac{\ell(I)}{2} \right\}.\end{equation}
It is well-known that Carleson boxes form a Muckenhoupt basis, and moreover the corresponding Muckenhoupt weight condition for $1<p<\infty$ (denoted by $\sigma \in \B_p(\mathbb D)$) characterizes the continuity of $\BP_{\mathbb{D}}$ on $L^p(\mathbb{D},\sigma)$ \cite{bekollebonami1978,bekolle1982}. More precisely, for $1\le p<\infty$, we say a weight $\sigma$, which is a locally integrable, almost everywhere positive function on $\mathbb{D}$, belongs to the B\'ekoll\`e-Bonami class $\B_p(\mathbb{D})$ if the $\B_p(\mathbb{D})$ characteristic,
	\begin{equation}\label{e:BpD} \brk{\sigma}_{\B_p(\mathbb{D})} = \sup_{I} \avg{ \sigma }_{Q_I} \avg{\sigma^{-1}}_{\frac{1}{p-1},Q_I} \end{equation}
is finite, where we are using the following local average notation. For a measurable function $f$, $0 < p <\infty $, a weight $v$, and a measurable set $E$,
	\[ \avg{ f}_{p,v,E} = \frac{ \norm{ f }_{L^p(E,v)} }{\left( \int_E v \, dA \right) ^{1/p}} , \quad \norm{ f }_{L^p(E,v)} = \left( \int_E \abs{f}^p v \, dA \right)^{1/p}.\]
When $p=\infty$, we understand both quantities above to be the $L^\infty(E)=L^\infty(E,v)$ norm. The measure $dA$ is the area measure in $\mathbb C$ (normalized so that $A(\mathbb{D})=1$) and when $v$ is identically $1$ or $p=1$ we omit those parameters in the notation.
	
Quantitative weighted estimates for $\BP_{\mathbb D}$ in terms of the characteristic \eqref{e:BpD} are also well-understood. For $1<p<\infty$,
	\begin{equation}\label{e:w} [\sigma]_{\B_p(\mathbb{D})}^{\frac 1{2p}} \lesssim \norm{ \BP_{\mathbb D} }_{\mathcal L(L^p(\mathbb D,\sigma))} \lesssim [\sigma]_{\B_p(\mathbb{D})}^{\max\left\{1,\frac{1}{p-1}\right\}}. \end{equation}
The lower inequality can be derived from \cite{bekolle1982}*{Proposition 2} together with arguments in \cite{rtw2017}, and the upper was first established in \cite{PR} and was extended to the unit ball in $n$ dimensions in \cite{rtw2017} \footnote{The quantitative upper estimate is not simply a matter of tracking the constants in the qualitative case, but rather was delayed until the proof of the $\mathrm{A}_2$ conjecture in harmonic analysis \cite{hytonen12} and the ensuing ``sparse revolution'' that took place in the early 2010's \cites{conde-alonso2016pointwise,lerner-a2, lacey-a2,lerner2019intuitive}} . A weak-type analog of \eqref{e:w} also holds when $p=1$, see \cite{bekolle1982}, though the sharp dependence on the $\B_1(\mathbb D)$ characteristic is not known.   
	
The following question is natural: what is the appropriate generalization of \eqref{e:BpD} and \eqref{e:w} to general bounded simply connected planar domains $\Omega$? 
One immediate obstacle is the apparent lack of canonical replacement for Carleson boxes that respects the boundary geometry. 
A proposal was given by Burbea in \cite[Theorem 2]{burbea83} which we slightly simplify here.

Let $\psi$ be a conformal map from $\mathbb D$ onto $\Omega$. For $1\le p<\infty$, we say a weight $\sigma$ on $\Omega$ belongs to the B\'ekoll\`e-Bonami class $\B_p(\Omega)$ if the $\B_p(\Omega)$ characteristic,
	\begin{equation}\label{e:BpO} \brk{\sigma}_{\B_p(\Omega)} = \sup_{I} \avg{ \sigma }_{\psi(Q_I)} \avg{\sigma^{-1}}_{\frac{1}{p-1},\psi(Q_I)} \end{equation}
is finite. For the domains we consider, the images $\{\psi(Q_I)\}_{I}$ do indeed form a Muckenhoupt basis (in the sense of \cite[Definition 3.1]{cmp2011}) and furthermore $\brk{\sigma}_{\B_p(\Omega)}$ is, up to an absolute constant, independent of the choice of conformal map (see Proposition \ref{prop:muck} below).

By changing variables, $[\sigma]_{\B_p(\Omega)}$ can be related to a weighted characteristic on the unit disc. For weights $u,v$ on $\mathbb D$, and $1 \le p < \infty$, we say $u$ belongs to the weighted B\'ekoll\`e-Bonami class $\B_p(\mathbb D,v)$ if 
	\begin{equation}\label{e:Bpuv} [u]_{\B_p(\mathbb D,v)} = \sup_{I} \avg{ u }_{v,Q_I} \avg{u^{-1}}_{\frac{1}{p-1},v,Q_I} < \infty.\end{equation}
When $v$ is identically $1$, we remove it from the notation and recover \eqref{e:BpD}.	
A change of variable reveals
	\begin{equation}\label{e:intro-change} \avg{\sigma}_{p,\psi(Q_I)} = \avg{\sigma \circ \psi}_{p,|\psi'|^2,Q_I},\end{equation}
hence, for $1\leq p<\infty$, $u = (\sigma \circ \psi)$, and $v = |\psi'|^{2}$,
	\begin{equation}\label{e:sigma-uv} [\sigma]_{\B_p(\Omega)} = [u]_{\B_p(\mathbb D,v)}.\end{equation}
 
On the other hand, the Bergman projection $\BP_{\Omega}$ can be connected to $\BP_{\mathbb D}$ through the conformal map $\psi$. Consequently,
	\begin{equation}\label{e:transform} \norm{ \BP_{\Omega}}_{\mathcal L(L^p(\Omega,\sigma))} = \norm{ \BP_{\mathbb D} }_{\mathcal L(L^p(\mathbb D, (\sigma \circ \psi) |\psi|^{2-p}))}; \end{equation}
see, for example, \cite[Theorem 2]{burbea83} or \cite[Lemma 2.4]{wagner22}. 
So, in light of \eqref{e:sigma-uv}, \eqref{e:transform}, and \eqref{e:w}, we pose the following question. Under what conditions on a weight $v$ does it hold that $u \in \B_p(\mathbb D,v)$ implies $uv^{1-p/2} \in \B_p(\mathbb D)$?  
A   non-optimal   sufficient condition is that $v \in \B_1(\mathbb D)$. 
Indeed, the definition of $[v]_{\B_1(\mathbb D)}$ implies
\begin{equation}\label{e:introB1Bp} \avg{u v^{1-p/2}}_{Q_I} \avg{ uv^{1-p/2} }_{\frac{1}{p-1},Q_I}  \le [v]_{\B_1(\mathbb D)}^p [u]_{\B_p(\mathbb D,v)}, \qquad 1 \le p < \infty.\end{equation}
Thus, concerning the Bergman projection on $\Omega$, the following result   is an immediate consequence of   the above observations and the upper estimate in \eqref{e:w}.

\begin{theorem}\label{thm:strong-quant}
Let $\Omega \subset \mathbb C$ be a bounded, simply connected domain with $|\psi'|^2 \in \B_1(\mathbb D)$, then for $1<p<\infty$,
	\begin{equation}\label{e:strong-quant}\|\BP_{\Omega}\|_{\mathcal L(L^p(\Omega,\sigma))} \lesssim \left( \brk{\abs{\psi'}^2}_{\B_1(\mathbb D)}^p \brk{\sigma}_{\B_p(\Omega)} \right)^{\max\{1,\frac{1}{p-1}\}}.\end{equation}
Moreover, the implicit constant depends only on $p$.
\end{theorem}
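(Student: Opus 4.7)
The plan is to chain together three ingredients that the excerpt has already assembled: the conformal-transplant identity \eqref{e:transform} for the operator norm, the upper bound in \eqref{e:w} for the Bergman projection on the disc, and the ``product rule'' \eqref{e:introB1Bp} that turns the $\B_1$-hypothesis on $|\psi'|^2$ into control of a $\B_p(\mathbb{D})$ characteristic by a $\B_p(\mathbb{D},v)$ characteristic. No new analysis is needed; the task is purely to verify that the constants line up.

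Concretely, I would set $u = \sigma \circ \psi$ and $v = |\psi'|^2$, so that the transplanted weight appearing in \eqref{e:transform} is exactly $\tau := (\sigma \circ \psi)|\psi'|^{2-p} = uv^{1-p/2}$. Step one: apply \eqref{e:transform} to rewrite the left-hand side as $\|\BP_{\mathbb D}\|_{\mathcal L(L^p(\mathbb D, \tau))}$. Step two: apply the upper bound in \eqref{e:w} with weight $\tau$ to obtain
\[
\|\BP_{\mathbb D}\|_{\mathcal L(L^p(\mathbb D, \tau))} \lesssim [\tau]_{\B_p(\mathbb D)}^{\max\{1, \frac{1}{p-1}\}},
\]
reducing matters to an upper bound on $[\tau]_{\B_p(\mathbb D)}$. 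Step three: invoke \eqref{e:introB1Bp}, which, after taking the supremum over intervals $I$, gives
\[
[\tau]_{\B_p(\mathbb D)} = [uv^{1-p/2}]_{\B_p(\mathbb D)} \le [v]_{\B_1(\mathbb D)}^p [u]_{\B_p(\mathbb D, v)}.
\]
Step four: apply \eqref{e:sigma-uv} to identify $[u]_{\B_p(\mathbb D,v)} = [\sigma]_{\B_p(\Omega)}$, and use the hypothesis that $[v]_{\B_1(\mathbb D)} = [|\psi'|^2]_{\B_1(\mathbb D)}$ is finite. Composing these four steps yields \eqref{e:strong-quant}, with implicit constant coming only from \eqref{e:w}, hence depending only on $p$.

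There is no genuine obstacle in the argument; it is a bookkeeping assembly. The one place worth double-checking is \eqref{e:introB1Bp} itself, which is the pointwise-in-$I$ estimate $\avg{uv^{1-p/2}}_{Q_I} \avg{u^{-1} v^{(p/2)-1}}_{\frac{1}{p-1}, Q_I} \le [v]_{\B_1(\mathbb D)}^p [u]_{\B_p(\mathbb D, v)}$; this follows by converting each unweighted average over $Q_I$ into a $v$-weighted average via $\avg{v}_{Q_I} \le [v]_{\B_1(\mathbb D)}\operatorname{ess\,inf}_{Q_I} v$ and then regrouping. Since the excerpt records this inequality as a standing identity, the proof of Theorem \ref{thm:strong-quant} itself is literally a one-line composition of \eqref{e:transform}, \eqref{e:w}, \eqref{e:introB1Bp}, and \eqref{e:sigma-uv}.
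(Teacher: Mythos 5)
Your proposal is correct and is exactly the argument the paper intends; the paper explicitly presents Theorem~\ref{thm:strong-quant} as ``an immediate consequence'' of \eqref{e:transform}, \eqref{e:sigma-uv}, \eqref{e:introB1Bp}, and the upper estimate in \eqref{e:w}, which is precisely the four-step chain you wrote out. Your reading of \eqref{e:introB1Bp} (with $u^{-1}v^{(p/2)-1}$ in the second factor, which is the inverse of the transplanted weight $\tau=uv^{1-p/2}$) is the intended one, and your sketch of why the $\avg{v}_{Q_I}$-powers cancel is the right verification.
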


In \cite{bekolle1986}, B\'ekoll\`e has shown that $|\psi'|^2 \in \B_1(\mathbb D)$ when $\Omega$ is convex. Also, when the boundary of $\Omega$ is sufficiently smooth (e.g. Dini smooth, which is slightly better than $C^1$), it is known that $\psi'$ extends to a continuous, non-vanishing function on $\overline{\mathbb{D}}$ \cite{pommerenke-book}, so trivially $|\psi'|^2 \in \B_1(\mathbb D)$. A weaker version of Theorem \ref{thm:strong-quant} in this latter special case of Dini smooth domains appears to have been discovered by Burbea in \cite{burbea80}. Beyond this, the authors are not aware of geometric conditions on $\Omega$ which are necessary or sufficient for $|\psi'|^2 \in \B_1(\mathbb D)$. 

Here, our goal is to go beyond the simple observations which to lead to Theorem \ref{thm:strong-quant} in two different, but related directions.

\subsection{Sharpened conditions on $\psi$ for weighted estimates}
First we will show that the assumption $|\psi'|^2 \in \B_1(\mathbb D)$ can be weakened to   an optimal condition (in some sense)   while maintaining the full range of weighted estimates. In fact, in Theorem \ref{thm:weighted-est} below, we give sufficient conditions on domains $\Omega$, in terms of $|\psi'|$, for which full or limited range weighted estimates hold for $\BP_{\Omega}$ with respect to the weight classes defined by \eqref{e:BpO}. We have the following special case of Theorem \ref{thm:weighted-est} for the full range of $p$:

\begin{theorem}\label{thm:mainsufffull} Let $\Omega \subset \mathbb{C}$ be a bounded, simply connected domain. If
	\begin{equation}\label{e:introB1plus-suff} |\psi'|^2 \in \bigcap_{p > 1} \B_p(\mathbb D),\end{equation}
then for each $\sigma \in \B_p(\Omega)$, $\BP_{\Omega} : L^p(\Omega,\sigma) \to L^p(\Omega,\sigma)$.
\end{theorem}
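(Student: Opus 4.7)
The plan is to chain together the three reductions already in the excerpt and then verify a single weight-theoretic implication on the disc. By \eqref{e:transform} together with the upper estimate in \eqref{e:w}, it suffices to show that $w := (\sigma\circ\psi)|\psi'|^{2-p} = uv^{1-p/2}$ lies in $\B_p(\mathbb D)$, where $u := \sigma\circ\psi$ and $v := |\psi'|^2$. Since $\sigma \in \B_p(\Omega)$ translates via \eqref{e:sigma-uv} to $u \in \B_p(\mathbb D,v)$, the theorem reduces to the implication
\[
u \in \B_p(\mathbb D,v)\ \text{and}\ v \in \bigcap_{q>1}\B_q(\mathbb D)\ \Longrightarrow\ uv^{1-p/2} \in \B_p(\mathbb D).
\]
The special case $v \in \B_1(\mathbb D)$ of this implication is exactly the soft estimate \eqref{e:introB1Bp}, so the actual work lies in relaxing $\B_1$ to $\bigcap_{q>1}\B_q$.

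The idea is to view $u \in \B_p(\mathbb D,v)$ as a Muckenhoupt $A_p$-condition for the weight $u$ with respect to the measure $d\nu := v\,dA$. Because $v \in \bigcap_q\B_q(\mathbb D)$ forces $\nu$ to be doubling on Carleson boxes, the usual Coifman--Fefferman machinery for $A_\infty$ weights carries over to $(\mathbb D,\nu)$; in particular, both $u$ and its dual $u^{-1/(p-1)}$ satisfy reverse Hölder inequalities with respect to $\nu$ at some exponent $s > 1$. To estimate the first average in $[uv^{1-p/2}]_{\B_p(\mathbb D)}$, I would rewrite $\int_Q uv^{1-p/2}\,dA = \int_Q u\cdot v^{-p/2}\,d\nu$ and apply Hölder's inequality with respect to $\nu$; the reverse Hölder bound controls the $u$-factor and what remains is $\avg{v^r}_Q^{1/s'}$ with $r = 1 - ps'/2 \le 1$. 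Jensen's inequality handles $r \in [0,1]$ and the $\B_q(\mathbb D)$ condition handles $r<0$ (by taking $q$ with $-1/(q-1)=r$), yielding in both cases $\avg{v^r}_Q \lesssim \avg{v}_Q^r$. Bookkeeping the exponents produces
\[
\avg{uv^{1-p/2}}_Q \lesssim \avg{u}_{\nu,Q}\avg{v}_Q^{1-p/2},
\]
and the parallel treatment of the dual integrand gives $\avg{u^{-1/(p-1)} v^{(p-2)/(2(p-1))}}_Q \lesssim \avg{u^{-1/(p-1)}}_{\nu,Q}\avg{v}_Q^{(p-2)/(2(p-1))}$. Forming the product that defines $[uv^{1-p/2}]_{\B_p(\mathbb D)}$, the two powers of $\avg{v}_Q$ combine to $\avg{v}_Q^{(1-p/2) + (p-2)/2} = \avg{v}_Q^0 = 1$, leaving $[u]_{\B_p(\mathbb D,v)} = [\sigma]_{\B_p(\Omega)}$ as the bound.

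The main obstacle is setting up the reverse Hölder estimate for $u$ with respect to $\nu$ on the Carleson-box basis and tracking the Hölder exponent carefully enough for the reverse Hölder hypothesis on $u$ and the $\B_q$ hypothesis on $v$ to apply simultaneously. The doubling of $\nu$ and the Muckenhoupt basis status of $\{Q_I\}$ relative to $\nu$ are essentially built into the hypotheses (the latter being Proposition \ref{prop:muck} under the change of variables $\psi$), so the usual self-improvement of $A_p$ weights should port cleanly; still, the $\B_q$ class one needs to invoke for $v$ depends on the reverse Hölder exponent for $u$, and this is precisely where the full strength of $v \in \bigcap_{q>1}\B_q(\mathbb D)$ is called upon. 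If treating $p<2$ and $p>2$ uniformly becomes cumbersome, one can argue the case $p\le 2$ directly and recover $p\ge 2$ from the duality identity $[uv^{1-p/2}]_{\B_p(\mathbb D)} = [u^{-1/(p-1)}v^{1-p'/2}]_{\B_{p'}(\mathbb D)}^{p-1}$ together with the weighted analog $u \in \B_p(\mathbb D,v) \Longleftrightarrow u^{-1/(p-1)} \in \B_{p'}(\mathbb D,v)$.
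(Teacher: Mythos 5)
Your overall architecture matches the paper's: reduce via \eqref{e:transform}, \eqref{e:sigma-uv}, and the upper bound in \eqref{e:w} to showing $u \in \B_p(\mathbb D,v)$ and $v \in \bigcap_{q>1}\B_q(\mathbb D)$ imply $uv^{1-p/2}\in\B_p(\mathbb D)$; then H\"older-split off a power of $v$, absorb one factor using $v\in\B_q$, and absorb the other using a reverse H\"older inequality for $u$. The exponent bookkeeping is essentially right. However, the step you describe as routine --- ``the usual Coifman--Fefferman machinery for $A_\infty$ weights carries over to $(\mathbb D,\nu)$; in particular, both $u$ and its dual $u^{-1/(p-1)}$ satisfy reverse H\"older inequalities with respect to $\nu$'' --- is precisely the step that fails, and it is the step the paper explicitly identifies as the obstacle: ``the main obstacle to obtaining the sufficiency in Theorem \ref{thm:mainsufffull} is that $\B_p(\mathbb D)$ weights do not in general satisfy a reverse H\"older inequality.''

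The reason Coifman--Fefferman does not port is that the Carleson box family $\{Q_I\}$ is not a differentiation basis on $\mathbb D$: a point $z\in T_I$ lies in no box strictly smaller than $Q_I$, so there is no sequence of boxes shrinking to $z$, Lebesgue differentiation along $\{Q_I\}$ fails, and the pointwise bound $u(z)\lesssim M_v^{\mathcal D}(u\chi_{Q_I})(z)$ --- the very first step in the reverse H\"older argument --- is simply false for a general $u\in\B_p(\mathbb D,v)$, even when $v$ is doubling. Doubling of $\nu$ gives you boundedness of $M_v^{\mathcal D}$ (which you correctly invoke for the Muckenhoupt basis), but not the pointwise domination of $u$ by its own dyadic maximal function. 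What saves the argument in the paper is a dyadic regularization: replace $u$ by $u_{\mathcal D,v}$ from \eqref{e:reg}, which by Lemma \ref{lemma:reg}(ii) has the \emph{same} Carleson-box averages as $u$ (so you lose nothing in the estimate) and by construction is essentially constant on each top half $T_I$, i.e.\ satisfies the APR condition \eqref{e:APR}. For an APR weight the pointwise bound is recovered (see the first display in the proof of Proposition \ref{prop:reverse Holder}), and then the Coifman--Fefferman iteration does go through, yielding the needed reverse H\"older inequality via Propositions \ref{prop:B2containment} and \ref{prop:reverse Holder}. That regularization step is the content of Proposition \ref{prop:reg}, and it is exactly what is missing from your argument. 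Without it, the proof has a genuine gap at the reverse H\"older claim, and the rest of the bookkeeping, though correct, does not suffice.
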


Theorem \ref{thm:mainsufffull} sharpens Theorem \ref{thm:strong-quant} in a non-trivial way. It is indeed the case that there exist domains $\Omega$ for which \eqref{e:introB1plus-suff} is satisfied, but $|\psi'|^2 \not\in \B_1(\mathbb D)$. See Section \ref{ss:example} below for an example. The main obstacle to obtaining the sufficiency in Theorem \ref{thm:mainsufffull} is that $\B_p(\mathbb D)$ weights do not in general satisfy a reverse H\"older inequality. If they did, then the strategy implemented by Johnson and Neugenbauer \cite{JN} concerning homeomorphisms which preserve the $\mathrm{A}_p(\mathbb R^n)$ classes could be easily repeated. In particular, a parallel theorem for the Szeg\H{o} projection can be established by directly adapting the arguments in the proof of \cite[Theorem 2.7]{JN}, so we do not focus on it in this paper. Nonetheless, we get around this difficulty for the Bergman projection by using the special form of the homeomorphism $\psi$, namely its conformality, as well as more modern tools from dyadic harmonic analysis. 

In the case when $\Omega$ is a uniform domain, the condition \eqref{e:introB1plus-suff} is sharp for weighted estimates (see Theorem \ref{thm:TFAE} below), and furthermore we can replace the $\B_p(\Omega)$ characteristic by the following equivalent one which is intrinsic to $\Omega$,
\begin{equation}\label{e:Dp} \brk{\sigma}_{\D_p(\Omega)} = \sup_{D} \avg{ \sigma }_{D \cap \Omega} \avg{\sigma^{-1}}_{\frac{1}{p-1}, D \cap \Omega} \end{equation}
 where the supremum is taken over all Euclidean disks $D$ centered on the boundary $\partial \Omega$. 

\begin{theorem}\label{thm:intro} Let $\Omega$ be a simply connected, bounded uniform domain. Then,
	\begin{equation}\label{e:introB1plus} |\psi'|^2 \in \bigcap_{p > 1} \B_p(\mathbb D),\end{equation}
if and only if for each $\sigma \in \D_p(\Omega)$, $\BP_{\Omega} : L^p(\Omega,\sigma) \to L^p(\Omega,\sigma)$.
\end{theorem}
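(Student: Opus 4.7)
The proof splits into two directions; both rely on a preliminary identification of $\D_p(\Omega)$ with $\B_p(\Omega)$ on uniform domains. The plan is first to establish this equivalence via a geometric comparison: when $\Omega$ is uniform, the image $\psi(Q_I)$ of every Carleson box is comparable to a boundary-centered Euclidean disk intersected with $\Omega$, and conversely. This comparison rests on the Koebe distortion theorem applied along nontangential approach regions, together with the corkscrew/cone characterization of uniform domains. With $\brk{\sigma}_{\D_p(\Omega)} \sim \brk{\sigma}_{\B_p(\Omega)}$ in hand, the theorem reduces to an equivalence phrased in terms of $\B_p(\Omega)$.

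\textbf{Sufficiency.} If $|\psi'|^2 \in \bigcap_{p > 1} \B_p(\mathbb D)$ and $\sigma \in \D_p(\Omega) = \B_p(\Omega)$, then Theorem \ref{thm:mainsufffull} applied directly to $\sigma$ yields boundedness of $\BP_\Omega$ on $L^p(\Omega,\sigma)$.

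\textbf{Necessity.} Assume $\BP_\Omega$ is bounded on $L^p(\Omega,\sigma)$ for every $\sigma \in \D_p(\Omega)$. Combining \eqref{e:transform} with the lower bound in \eqref{e:w}, and then \eqref{e:sigma-uv}, the hypothesis becomes the statement: for each $u \in \B_p(\mathbb D, |\psi'|^2)$ one has $u|\psi'|^{2-p} \in \B_p(\mathbb D)$. Testing with the power weight $u = |\psi'|^p$ then formally gives $u|\psi'|^{2-p} = |\psi'|^2 \in \B_p(\mathbb D)$; letting $p$ range over $(1,\infty)$ produces the desired condition $|\psi'|^2 \in \bigcap_{p > 1} \B_p(\mathbb D)$.

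\textbf{Main obstacle.} The crux is verifying that the testing weight $u = |\psi'|^p$ is admissible, that is, $u \in \B_p(\mathbb D,|\psi'|^2)$, equivalently that $\sigma_0 := |(\psi^{-1})'|^{-p}$ lies in $\D_p(\Omega)$. Unwinding the definitions, this reduces to a reverse-H\"older-type bound for $|\psi'|^2$ on Carleson boxes, and it is here that the uniform-domain hypothesis is expected to enter essentially: together with conformal distortion (in particular the $\BMOA$ character of $\log\psi'$ for uniform targets), it should supply the required self-improvement. If direct testing is unavailable in some subrange of $p$, a fallback is to combine the information at $p$ and $p'$ via the duality $\B_p(\mathbb D) \leftrightarrow \B_{p'}(\mathbb D)$ and the monotonicity $\B_p \subset \B_q$ for $p \leq q$, exploiting that the hypothesis is assumed simultaneously for the whole range $p>1$.
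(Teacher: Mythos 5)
Your reduction of the equivalence to a $\B_p(\Omega)$-versus-$\D_p(\Omega)$ comparison, and your treatment of the sufficiency direction via Theorem~\ref{thm:mainsufffull}, both match the paper. For the $\D_p \sim \B_p$ equivalence, your outline (Koebe plus corkscrew/cones) is on the right track but understates the geometric input: the paper's Proposition~\ref{prop:BpO-psi} really needs the quasiconformal extension of $\psi$ to the plane (Ahlfors/Proposition~\ref{p:uniform}), and then quasisymmetry of $\Psi$ for one inclusion and Gehring's $\A_\infty$ property of $|J\Psi|$ (Lemma~\ref{lem:ainfintytents}) for the other; Koebe alone does not give the $\A_\infty$ comparison between interior area and Lebesgue measure that the harder direction requires.

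The necessity direction is where there is a genuine gap. You correctly translate the hypothesis to ``$u \in \B_p(\mathbb D,v) \Rightarrow uv^{1-p/2} \in \B_p(\mathbb D)$'' with $v = |\psi'|^2$, and you correctly note that testing with $u = v^{p/2}$ would immediately yield $v \in \B_p(\mathbb D)$. But you explicitly acknowledge you cannot verify that $u = v^{p/2}$ is admissible, i.e., that $v^{p/2} \in \B_p(\mathbb D, v)$, and this is not a technicality: unwinding the definition, $v^{p/2}\in\B_p(\mathbb D,v)$ is itself a reverse-H\"older/$\B_p$-type condition on $v$ that is essentially the \emph{conclusion} you are trying to establish, so the argument is circular as stated. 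Gesturing at $\log\psi'\in\BMOA$ for quasidisks does give $v^t\in\A_\infty$ for small $|t|$, but that is far from the $v$-weighted reverse H\"older needed for $v^{p/2}$ uniformly in $p$, and you have not shown how to extract it from the boundedness hypothesis.

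The paper avoids this circularity entirely by a different mechanism (Section~\ref{sec:necessity}): instead of a single power-weight test, it observes that the boundedness hypothesis forces $w\circ\psi$ to be weakly doubling for every $w$ in a class that contains the Coifman--Rochberg-type weights $(M_\psi f)^q$ constructed in Lemma~\ref{lemma:B1O}. Feeding these maximal-function weights into a Jones-style argument (Proposition~\ref{prop:homM}, modeled on \cite{jones-hom}), aided by the geometric Lemma~\ref{lemma:neigh} and the uniform-domain doubling of $v$, yields the unweighted maximal inequality $\avg{M_v(\chi_{Q_I}g)}_{q,Q_I}\lesssim\avg{g}_{v,Q_I}$, which is then specialized to $g=v^{-p/(p-1)}$ to conclude $v\in\B_p(\mathbb D)$. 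The admissibility of these test weights is \emph{proved}, via the weak type $(1,1)$ of $M_\psi$ and the Vitali covering Lemma~\ref{lemma:vitali}, rather than assumed. You would need to supply an argument of comparable substance to close the gap; the power-weight test as proposed does not close.
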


We postpone the precise definition of uniform domain until Section \ref{s:prelim}, but the class includes all Lipschitz domains and furthermore, the boundary can be quite irregular, allowing for Hausdorff dimension arbitrarily close to $2$.
Furthermore, the condition \eqref{e:introB1plus} is satisfied whenever $\Omega$ is asymptotically conformal (see Section \ref{ss:asymp-conf} below) therefore the full range of weighted estimates hold on these domains.

 We remark in passing that our results should be extendable to unbounded graph domains. One only needs the existence of a conformal map that maps the upper-half space to $\Omega$, and has a continuous extension to the Riemann sphere that maps the real axis to $\partial \Omega \setminus \{\infty\}$ and fixes the point at infinity (such a conformal map is guaranteed to exist for graph domains, see \cite[Proposition 2.2]{lanzani-stein-2004}). In this case, one proceeds by using the upper-half plane rather than the unit disk as the model domain. The B\'{e}koll\`{e}-Bonami regions in the upper-half space are again Carleson tents, see \cite{PR}. We have not checked all the details, but we believe that the proofs should be completely analogous. 

\subsection{Weighted weak-type estimates}
Our second extension of Theorem \ref{thm:strong-quant} is to obtain the weighted weak-type $(1,1)$ analogue of \eqref{e:strong-quant}. The main difference is that when establishing the analogue of \eqref{e:transform} for the $L^{p,\infty}$ spaces, the conformal map appears partly as a multiplier, and partly as a measure (for the $L^p$ norm there is no distinction between a multiplier and a measure). The argument is similar to the one outlined in \cite[Lemma 3.1]{bekolle1986}, but we will need a generalization of the result there to weighted spaces and we include all the arguments for completeness. Weak-type estimates in which some or all of the weight is treated as a multiplier, which now appear to be called mixed weak-type inequalities, begin with Muckenhoupt and Wheeden \cite{muckenhoupt1977}. There was limited development of these types of estimates \cite{sawyer-weak,bonami82} until their systematic study by Cruz-Uribe, Martell, and P\'erez \cite{dcu-martell}. \cite{dcu-martell} and subsequent modern developments are motivated by interpolation problems with a change of measure, but as noted in Proposition \ref{prop:weak-transform} below, and in \cite{bonami82,bekolle1986}, they also arise in establishing weak-type estimates through a change of variable. The following theorem is proved in Section \ref{sec:weak} below. 

\begin{theorem}\label{thm:weak}
Let $\Omega \subset \mathbb C$ be a bounded, simply connected domain with $\abs{\psi'}^2 \in \B_1(\mathbb D)$, then
\begin{equation}\label{e:weak-quant}\|\BP_{\Omega}\|_{L^1(\Omega,\sigma) \to L^{1,\infty}(\Omega,\sigma)} \lesssim [\sigma]_{\B_1(\Omega)}^{3},\end{equation}
where the implicit constant depends polynomially on $\brk{\abs{\psi'}^2}_{\B_1(\mathbb D)}$.
\end{theorem}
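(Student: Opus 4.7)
The strategy is the two-stage scheme hinted at in the introduction: first reduce Theorem \ref{thm:weak} via $\psi$ to a mixed weak-type estimate for $\BP_\mathbb D$ on the disk, then establish that estimate by adapting Sawyer's argument \cite{sawyer-weak} (in the refined form of \cite{dcu-martell}) to the B\'ekoll\`e--Bonami setting.

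For the reduction, conformal invariance of the Bergman kernels gives the pointwise identity $\psi'(w)\,\BP_\Omega f(\psi(w)) = \BP_\mathbb D g(w)$ with $g := (f\circ\psi)\psi'$. Setting $u := \sigma\circ\psi$, $v := |\psi'|^2$, and changing variables yields
\[ \sigma\bigl\{|\BP_\Omega f|>\lambda\bigr\} = \int_{\{|\BP_\mathbb D g| > \lambda v^{1/2}\}} uv\, dA, \qquad \|f\|_{L^1(\Omega,\sigma)} = \int_\mathbb D |g|\,uv^{1/2}\,dA. \]
Writing $h := g/v^{1/2}$, $U := uv^{1/2}$, $V := v^{1/2}$ (so $UV = uv$), Theorem \ref{thm:weak} becomes the mixed weak-type bound
\[ \bigl\|\BP_\mathbb D(Vh)/V\bigr\|_{L^{1,\infty}(UV\,dA)} \lesssim [U]_{\B_1(\mathbb D)}^3\, C\bigl([V]_{\B_1(\mathbb D)}\bigr)\,\|h\|_{L^1(UV\,dA)}. \]
Here $V\in\B_1(\mathbb D)$ since $\B_1$-weights are stable under $t\mapsto t^{1/2}$ (apply Jensen to the a.e.\ pointwise inequality defining $\B_1$), and $U\in\B_1(\mathbb D)$ with $[U]_{\B_1(\mathbb D)}\lesssim [v]_{\B_1(\mathbb D)}\,[\sigma]_{\B_1(\Omega)}$ by \eqref{e:introB1Bp} at $p=1$. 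Substitution produces the cubic dependence on $[\sigma]_{\B_1(\Omega)}$ claimed in \eqref{e:weak-quant}.

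For the mixed weak-type bound, I would pointwise-dominate $\BP_\mathbb D$ by finitely many positive sparse operators over Carleson boxes $Q_I$ (using the positive kernel estimates underlying \cite{PR}). On each sparse form one runs a Cruz-Uribe--Martell--P\'erez-style Calder\'on--Zygmund decomposition of $h$ at height $\lambda$ relative to $UV\,dA$ and the Carleson-box basis. The $\B_1(\mathbb D)$ property of $V$ forces $V$ to be comparable to its Carleson-box averages a.e., which is the mechanism permitting $V$ to shuttle between the multiplier and the measure roles in the weak-type estimate: the good part is controlled via a duality/$L^2$ argument invoking \eqref{e:w}, and the bad part via near-disjointness of the stopping atoms together with the $\B_1$-property of $V$. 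The principal obstacle is executing this dyadic/sparse machinery with respect to Carleson boxes rather than Euclidean cubes, since the measure $UV\,dA$ is not Euclidean-doubling and Carleson boxes are highly eccentric near $\partial \mathbb D$; one must implement the stopping-time selection and reverse-doubling estimates intrinsically to the Carleson-box basis, and it is here that the precise polynomial exponents in $[U]_{\B_1(\mathbb D)}$ and $[V]_{\B_1(\mathbb D)}$ must be tracked.
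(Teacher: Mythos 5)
Your plan follows the same overall architecture as the paper's proof: conformal change of variables to reduce to a mixed weak-type estimate on $\mathbb D$ with $V=|\psi'|=w$ as multiplier and $UV=uv$ as underlying measure, pointwise sparse domination of $\BP_{\mathbb D}$, and a Calder\'on--Zygmund-type decomposition. Your identifications $U=uw\in\B_1(\mathbb D)$ and $V=w\in\B_1(\mathbb D)$ are exactly the weight hypotheses the paper verifies (its display \eqref{e:uwB1} and the remark $c_w\le[w]_{\B_1}\le[v]_{\B_1}$), so the reduction step is essentially the paper's Proposition \ref{prop:weak-transform}.

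The main gap is in how you propose to run the decomposition. First, a small but consequential slip: the stopping-time decomposition should be taken at height $\lambda$ relative to $V\,dA=w\,dA$ (i.e., one decomposes $h=g w^{-1}$ using $M^{\mathcal D}_{w}$), not relative to $UV\,dA$. This is standard in the Sawyer/Cruz-Uribe--Martell--P\'erez framework, and it is the choice that makes the level-set measure $uv(E_\lambda)$ controllable by a weighted weak-$(1,1)$ bound for $M^{\mathcal D}_w$ in $L^1(\mathbb D,uv)$ (the paper isolates this as a separate proposition; you would need it too). Second, and more importantly, your handling of the bad part — ``near-disjointness of the stopping atoms together with the $\B_1$-property of $V$'' — does not identify the actual mechanism, and a naive cancellation argument cannot work here because the dominating operators $\mathcal A_{\mathcal D}$ are positive. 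The paper's key move is a dyadic regularization: replace $g_1$ by its $T_I$-averages and $g_2$ by its averages on the maximal stopping boxes $Q_{\lambda,k}$, so that both $\widetilde{g_1}w^{-1}$ and $\widetilde{g_2}w^{-1}$ are pointwise $\le c_w[w]_{\B_1}\lambda$. One then applies the sharp $L^2(\mathbb D,uv)$ sparse bound \eqref{e:Aw} (with $v\mapsto w$ there) to both regularized pieces and converts the $L^2$ estimate to a weak-$L^1$ estimate by using the $L^\infty$ bound to strip one power of $\lambda$. Without this regularize-then-$L^2$ step, your plan would stall at the bad part. Finally, your concern about ``stopping-time selection and reverse-doubling intrinsic to the Carleson-box basis'' is not where the difficulty lies: the dyadic grids $\mathcal D_1,\mathcal D_2$ on $\mathbb T$ and the doubling of $w$ (a $\B_1$ weight) handle that routinely; the genuine work is in the regularization device and the bookkeeping of characteristics $[uw]_{\B_1(w)}$, $[uw]_{\B_1}$, and $[w]_{\B_1}$ that yields the cubic exponent in \eqref{e:weak-quant}.
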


\subsection{Organization}
This paper is organized as follows. In Section \ref{s:prelim}, we collect useful preliminaries regarding dyadic structures on $\mathbb{D}$, uniform domains, properties of conformal maps, and weight classes. In Section \ref{s:main-proof}, we introduce and prove a general version of Theorem \ref{thm:mainsufffull}, namely Theorem \ref{thm:weighted-est}. In Section \ref{sec:necessity}, we focus on the category of uniform domains and prove a converse to Theorem \ref{thm:weighted-est} which implies Theorem \ref{thm:intro}. Finally, in Section \ref{sec:weak} we prove Theorem \ref{thm:weak}.

\section{Preliminaries: Dyadic structures, conformal maps, and uniform domains}\label{s:prelim}
In this section we collect some preliminaries concerning dyadic harmonic analysis, conformal maps, and uniform domains.

\subsection{Dyadic harmonic analysis in $\mathbb D$} \label{ss:dyad}
Given a Carleson box $Q_I$, defined by \eqref{e:QI}, let $T_I$ denote the corresponding ``top half''
\[ T_I=\left \{re^{\mathrm{i} \theta} \in \mathbb{D}: \frac{\ell(I)}{2} \leq 1-r \leq \ell(I), |\theta-\theta_0| \leq \frac{\ell(I)}{2} \right\},\]
where $\ell(I)$ denotes the arclength of the interval $I$, normalized so that $\ell(\mathbb{T})=1$. 
$T_I$ coincides with $T_{I,\rho}$ from \cite{APR} with $\rho=\frac{1}{2}$. 

It is well-known that any Carleson box $Q_I$ can be well-approximated by $Q_J$ where $J$ belongs to one of two dyadic systems of intervals; say $\mathcal{D}_1, \mathcal{D}_2.$ By well-approximated, we mean that given an arbitrary interval $I$ on $\mathbb{T}$, there exists intervals $J,J' \in \mathcal{D}_1 \cup \mathcal{D}_2$ satisfying $J' \subset I \subset J$ and $|Q_I| \sim |Q_J| \sim |Q_{J'}|.$ For example, one can take 

\begin{equation}\label{e:dyad}\begin{gathered} \mathcal{D}_1= \left\{\left[\frac{2 \pi j}{2^k},\frac{2 \pi (j+1)}{2^k}\right): j,k \in \mathbb{N}, 0 \leq j< 2^k \right\},\\
\mathcal{D}_2= \left\{\left[\frac{2 \pi j}{2^k}+\frac{2 \pi}{3},\frac{2 \pi (j+1)}{2^k}+\frac{2 \pi}{3}\right): j,k \in \mathbb{N}, 0 \leq j< 2^k\right\}.\end{gathered} \end{equation}
 Therefore, when computing the B\'{e}koll\`{e}-Bonami characteristic of $u$, it suffices to compute averages over Carleson boxes $Q_I$ where $I$ belongs to $\mathcal{D}_1$ or $\mathcal{D}_2.$ The precise form of $\mathcal D_j$ is not important, but rather we note a few essential properties of any dyadic grid $\mathcal D$:
\begin{itemize}
	\item[(i)] For each $k \in \mathbb N$, $\{I \in \mathcal D : \ell(I) = 2^{-k} \}$ forms a partition of $\mathbb T$.
	\item[(ii)] $\{T_I : I \in \mathcal D\}$ forms a partition of $\mathbb D$.
	\item[(iii)] Any two $I,J \in \mathcal D$ are either disjoint, or one is contained in the other.
	\item[(iv)] For each $I \in \mathcal D$ and $k \ge 1$, we can subdivide $I$ into subintervals $\{I_j^k\}_{j=1}^{2^k}$ with side length $\ell(I_j^k)=2^{-k} \ell(I)$ which we call the \textit{$k$-th generation} of $I$. For $J=I_j^1$, we say $I$ is the \textit{dyadic parent} of $J$ and analogously, $Q_I$ the dyadic parent of $Q_J$.
\end{itemize}

Throughout $u$, $v$, and $w$ will be generic weights on $\mathbb D$.
We say a weight $u$ is doubling (with respect to $\mathcal D$) if there exists $c_u>0$ such that for all intervals $I\subset J$ in $\mathbb T$ (resp. in $\mathcal D$) with $\ell(J)=2\ell(I)$,
	\begin{equation}\label{e:doubling} c_u^{-1} \int_{Q_J} u \, dA  \le \int_{Q_I} u \, dA \le c_u \int_{T_I} u \, dA.\end{equation}
If only the first inequality in \eqref{e:doubling} holds, then we say $u$ is   \emph{weakly doubling}  . It is not difficult to show that any weight $u \in \B_p(\mathbb D)$ is doubling, $1\leq p<\infty.$

For a weight $v$ on $\mathbb{D}$, define the maximal operator 
\begin{equation}\label{e:max} M_{v} f(z):= \sup_{I: Q_I \ni z} \avg{f}_{v,Q_I}, \quad f\in L^1(\mathbb{D},\sigma), z \in \mathbb{D}.  \end{equation}
Note when $v \equiv 1$, $M_v$ corresponds to the ordinary maximal function in the Bergman setting, and in this case we simply write $M_v=M.$ When the supremum is restricted only to $I \in \mathcal D$, we use the notation $M^{\mathcal D}_v$. Standard considerations show that for any $v$ and any dyadic grid $\mathcal D$, $M^{\mathcal D}_v : L^p(\mathbb D,v) \to L^p(\mathbb D,v)$ with norm $p'=\frac{p}{p-1}$ \cite{lerner2019intuitive}*{Theorem 15.1}.
By the approximation property of $\mathcal{D}_1$ and $\mathcal{D}_2$,
\begin{equation} M_v f(z) \lesssim M_v^{\mathcal{D}_1}f(z)+ M_v^{\mathcal{D}_2}f(z), \quad f\in L^1(\mathbb{D},v), z \in \mathbb{D} \label{MaximalReduDyad} \end{equation}
holds whenever $v$ is doubling.

We also outline the following sparse estimate in the Bergman setting which we will use, but is quite standard by now. Let 
	\begin{equation}\label{e:A} \mathcal A_{\mathcal D,v}f = \sum_{I \in \mathcal D} \avg{ f}_{v,Q_I} \chi_{Q_I}.\end{equation}
Then,
	\[ \begin{aligned} \abs{ \left \langle \mathcal A_{\mathcal D,v} f,g \, wv \right\rangle } &= \sum_{I \in \mathcal D} \left( \int_{Q_I} v \, dA \right) \avg{ f }_{v,Q_I} \avg{ g w}_{v,Q_I} \\
		&\le c_v \sum_{I \in \mathcal D} \left(\int_{T_I} v \, dA \right) \avg{ w }_{v,Q_I} \avg{ w^{-1} }_{v,Q_I}\avg{ fw }_{w^{-1}v,Q_I} \avg{ g}_{wv,Q_I} \\
		&\le c_v \brk{w}_{\B_2(v)} \int_{\mathbb D} M_{w^{-1}v} (fw) w^{-\frac 12} M_{wv} (g) w^{\frac{1}{2}} v \, dA. \\
		&\le 4 c_v \brk{w}_{\B_2(v)} \norm {f}_{L^2(wv)} \norm{g}_{L^2(wv)}.
	\end{aligned} \]
This, together with a slight modification for $p \ne 2$ (see, e.g. \cite{moen2012}) gives
	\begin{equation}\label{e:Aw} \norm{ A_{\mathcal D,v} }_{L^p(\mathbb D,wv)} \le pp' c_v \brk{w}_{\B_p(v)}^{\max\left\{1,\frac{1}{p-1} \right\} }. \end{equation}

\subsection{Conformal estimates}\label{ss:conf}
 We first verify that the class $\B_p(\Omega)$ is independent of the choice of conformal map, and that the collection $\{\psi(Q_I)\}_{I \subseteq \mathbb{T}}$ forms a Muckenhoupt basis, which means the maximal function
	\begin{equation}\label{e:Mpsi} M_\psi f(z) = \sup_{I: z \in \psi(Q_I)} \avg{ f}_{\psi(Q_I)} \end{equation}
is bounded on $L^p(\Omega,\sigma)$ when $\sigma \in \B_p(\Omega)$. 
 
\begin{proposition}\label{prop:muck}
Suppose there exists a conformal map $\psi_0$ from $\mathbb D$ onto $\Omega$ such that $\abs{ \psi_0'}^2 \in \cup_{ q=1}^\infty \B_q(\mathbb D)$. Then for any other conformal map $\psi$ from $\mathbb D$ onto $\Omega$,
\begin{itemize} 
\item[(i)]For each $1<p<\infty$, there exists $C>0$ such that for any weight $\sigma$,
\[ \sup_{\psi} \sup_{I} \avg{ \sigma }_{\psi(Q_I)} \avg{\sigma^{-1} }_{\frac{1}{p-1}, \psi(Q_I)} \le C \sup_{I} \avg{ \sigma }_{\psi_0(Q_I)} \avg{\sigma^{-1} }_{\frac{1}{p-1},\psi_0(Q_I)}.\]
\item[(ii)]  $|\psi'|^2$ is doubling,
\item[(iii)] $\{ \psi(Q_I)\}_{I}$ forms a Muckenhoupt basis.
\end{itemize}
\end{proposition}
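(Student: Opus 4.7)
The plan is to reduce all three assertions to the hypothesized map $\psi_0$ via the observation that any two conformal maps $\psi, \psi_0 \colon \mathbb{D} \to \Omega$ differ by a Möbius automorphism of $\mathbb{D}$, so $\psi = \psi_0 \circ \varphi$ for some $\varphi \in \mathrm{Aut}(\mathbb{D})$. The key geometric input is a lemma asserting that Möbius automorphisms preserve the Carleson box structure: for every $\varphi \in \mathrm{Aut}(\mathbb{D})$ and every interval $I \subset \mathbb{T}$, there exist intervals $J_-, J_+ \subset \mathbb{T}$ with $Q_{J_-} \subset \varphi(Q_I) \subset Q_{J_+}$ and $\ell(J_-) \sim \ell(J_+)$, with absolute implicit constants, and in a manner respecting dyadic inclusion up to bounded factors. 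This follows from the fact that $Q_I$ coincides, up to absolute factors, with a hyperbolic ball based at the ``top'' point $(1 - \ell(I)) e^{\mathrm{i}\theta_0}$, together with the hyperbolic isometry property of Möbius transformations.

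For part (ii), the change of variable $\int_{Q_I} \abs{\psi'}^2 \, dA = \int_{\varphi(Q_I)} \abs{\psi_0'}^2 \, dA$ combined with the geometric lemma reduces doubling of $\abs{\psi'}^2$ to doubling of $\abs{\psi_0'}^2$, which holds since $\abs{\psi_0'}^2 \in \bigcup_{q} \B_q(\mathbb{D})$ implies the doubling condition \eqref{e:doubling}. Part (i) is the analogous argument applied to the weight $(\sigma \circ \psi_0)\abs{\psi_0'}^2$: \eqref{e:intro-change} turns $\avg{\sigma}_{\psi(Q_I)}$ into a $\abs{\psi_0'}^2$-weighted average of $\sigma \circ \psi_0$ over $\varphi(Q_I)$, and the geometric lemma together with doubling of $\abs{\psi_0'}^2$ shows this is comparable to the corresponding average over $\psi_0(Q_{J_\pm})$; the same reasoning handles the $\sigma^{-1/(p-1)}$ factor in \eqref{e:BpO}, giving $\brk{\sigma}_{\B_p(\Omega),\psi} \lesssim \brk{\sigma}_{\B_p(\Omega),\psi_0}$, and reversing the roles of $\psi$ and $\psi_0$ via $\varphi^{-1}$ yields the opposite inequality.

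For part (iii), a direct change of variable yields the identity $(M_\psi f) \circ \psi = M_{\abs{\psi'}^2}(f \circ \psi)$, so $M_\psi \colon L^p(\Omega, \sigma) \to L^p(\Omega, \sigma)$ is equivalent to $M_v \colon L^p(\mathbb{D}, uv) \to L^p(\mathbb{D}, uv)$ where $u = \sigma \circ \psi$ and $v = \abs{\psi'}^2$. Since $v$ is doubling by (ii), \eqref{MaximalReduDyad} reduces the estimate to the dyadic maximal functions $M_v^{\mathcal{D}_i}$. The hypothesis $\sigma \in \B_p(\Omega)$ translates via \eqref{e:sigma-uv} into $u \in \B_p(\mathbb{D}, v)$, and the $L^p(\mathbb{D},uv)$ boundedness of $M_v^{\mathcal{D}}$ under this condition is the classical $A_p$ theory for the doubling measure $v \, dA$ on the Carleson-box basis, or equivalently follows by pointwise dominating $M_v^{\mathcal{D}} f$ by a sparse operator of the form \eqref{e:A} and applying \eqref{e:Aw}.

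The main obstacle is the geometric lemma in the first paragraph: although well-known, its verification requires care with the distortion formula $\abs{\varphi'(z)} = (1 - \abs{a}^2)/\abs{1 - \bar{a} z}^2$ (or equivalently pseudo-hyperbolic invariance) to track how $\varphi$ deforms both the top of $Q_I$ and its base arc $I$, and to confirm that the association $I \mapsto J_\pm$ behaves well under dyadic parent-child moves. Everything downstream is routine change of variable together with standard weighted maximal function theory for doubling measures.
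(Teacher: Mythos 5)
Your proposal follows essentially the same route as the paper: the near-invariance of the Carleson-box family under $\mathrm{Aut}(\mathbb D)$ (the paper's \eqref{e:app}, illustrated in Figure 1), the factorization $\psi=\psi_0\circ\tau$, a change-of-variable plus doubling argument for (i) and (ii), and for (iii) the pointwise identification $(M_\psi f)\circ\psi = M_{|\psi'|^2}(f\circ\psi)$ followed by the dyadic reduction \eqref{MaximalReduDyad} and the sparse bound \eqref{e:Aw}. The only cosmetic differences are that you pull the averages in (i) back to $\mathbb D$ whereas the paper keeps the comparison on $\Omega$, and you spell out the doubling chain in (ii) a bit more explicitly than the paper's terse remark that $|\tau'|^2\in\B_1(\mathbb D)$; both formulations are correct and neither changes the underlying argument.
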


\begin{proof}
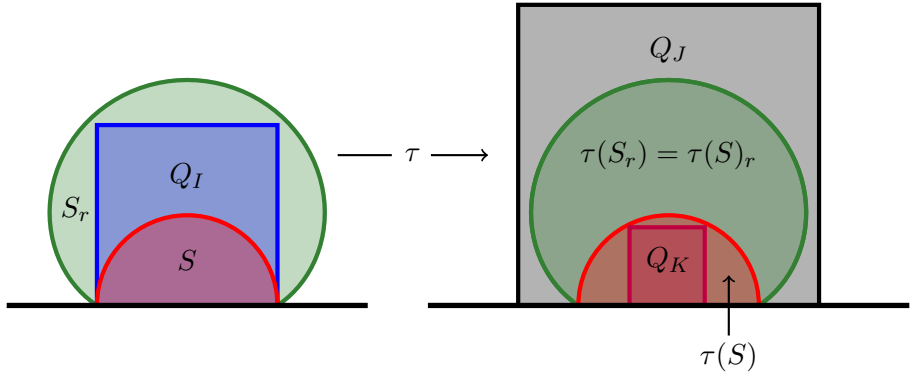
\begin{figure}\label{f:app}
\begin{tikzpicture}[scale=0.4]
\filldraw [green, ultra thick,  domain=0:180, samples=75, opacity=0.3] 
 plot ({
	3*(1+1.5*sin(\x))*cos(\x)
	},{
	3*(1+1.5*sin(\x))*sin(\x)
	});
\draw [green, ultra thick,  domain=0:180, samples=75, fill opacity=0.9] 
 plot ({
	3*(1+1.5*sin(\x))*cos(\x)
	},{
	3*(1+1.5*sin(\x))*sin(\x)
	});
\filldraw[blue,fill opacity=0.3] (-3,0) rectangle (3,6);
\draw[blue,fill opacity=0.9,ultra thick] (-3,0) rectangle (3,6);
\filldraw[red,fill opacity=0.3] (-3,0) -- (3,0) arc(0:180:3) --cycle;
\draw[red,fill opacity=0.9,ultra thick] (-3,0) -- (3,0) arc(0:180:3) --cycle;
\node at (0,1.5) {$S$};
\node at (0,4.3) {$Q_I$};
\node at (-3.7,3.2) {$S_r$};
\draw[line width=2pt] (-6,0) -- (6,0);
\draw[thick] (5,5) -- (6.8,5);
\node at (7.5,5) {$\tau$};
\draw[->,thick] (8.1,5) -- (10,5);
\draw[black,fill opacity=0.9,ultra thick] (11,0) rectangle (21,10);
\filldraw[black,fill opacity=0.3] (11,0) rectangle (21,10);
\filldraw [green, ultra thick,  domain=0:180, samples=75, fill opacity=0.3] 
 plot ({
	16+3*(1+1.5*sin(\x))*cos(\x)
	},{
	3*(1+1.5*sin(\x))*sin(\x)
	});
\draw [green, ultra thick,  domain=0:180, samples=75, fill opacity=0.9] 
 plot ({
	16+3*(1+1.5*sin(\x))*cos(\x)
	},{
	3*(1+1.5*sin(\x))*sin(\x)
	});
\filldraw[red,fill opacity=0.3] (13,0) -- (19,0) arc(0:180:3) --cycle;
\draw[red,fill opacity=0.9,ultra thick] (13,0) -- (19,0) arc(0:180:3) --cycle;
\filldraw[purple,fill opacity=0.3] (14.7,0) rectangle (17.2,2.6);
\draw[purple,fill opacity=0.9,ultra thick] (14.7,0) rectangle (17.2,2.6);

\node at (16,8.5) {$Q_J$};
\node at (16,5) {$\tau(S_r)=\tau(S)_r$};
\node at (16,1.5) {$Q_K$};
\node at (18,-1.7) {$\tau(S)$};
\draw[->,thick] (18,-1) to (18,1);
\draw[line width=2pt] (8,0) -- (24,0);
\end{tikzpicture}
\caption{Carleson boxes are well-approximated in the pseudohyperbolic metric by regions bounded by geodesics, $S$, in that $S \subset Q_I \subset S_r$ where $S_r$ is an $r$-neighborhood of $S$ in the pseudohyperbolic metric. Such $r>0$ is absolute and $|S| \sim |S_r|$. Since $\{S_r\}$ is invariant under $\tau$, the collection $\{Q_I\}$ is nearly invariant under automorphisms of $\mathbb D$ in the sense of \eqref{e:app}.}\end{figure}
 
First notice that the family $\{Q_I\}_I$ is nearly invariant under $\operatorname{Aut}(\mathbb D)$, the automorphisms of $\mathbb D$, in the sense that there exists an absolute constant $C>0$ such that given any $Q_I$ and any $\tau \in \operatorname{Aut}(\mathbb D)$, we can find $Q_J$ and $Q_K$ such that
	\begin{equation}\label{e:app} Q_{K} \subset \tau(Q_I) \subset Q_{J}, \qquad C^{-1} \le \frac{\ell( K)}{\ell(J)} \le C.\end{equation}
This geometric property is illustrated in Figure 1. Now, to prove (i), any such $\psi$ can be factored as $\psi=\psi_0 \circ \tau$ for some $\tau \in \operatorname{Aut}(\mathbb D)$, so by \eqref{e:app} and the doubling property of $\psi_0$,
	\[ \avg{\sigma}_{\psi(Q_I)} = \avg{\sigma}_{\psi_0( \tau (Q_I))} \le \frac{ \int_{\psi_0(Q_{J})} \sigma }{|\psi_0(Q_{K})|} \lesssim \avg{ \sigma }_{\psi_0(Q_{J})}.\]
Repeating the same estimate for $\sigma^{\frac{-1}{p-1}}$ establishes (i).

The proof of (ii) is immediate, since $\abs{\tau'}^2$ is in fact a $\B_1$ weight (with characteristic depending on $\tau$). Therefore $|\tau(Q_J)| \sim |\tau(Q_I)| \sim |\tau(T_I)|$.

Now, to prove (iii), let $f \in L^1(\Omega)$ be nonnegative,  $I \subset \mathbb{T}$, and $z \in \psi(Q_I).$ Then by change of variable,
\[ \avg{f}_{\psi(Q_I)}= \avg{f\circ \psi}_{v, Q_I} \leq M_{v}(f \circ \psi)(\psi^{-1}(z)), \qquad v = \abs{\psi'}^2,\]
which immediately implies the pointwise inequality 
\[ M_{\psi} f(z) 
\leq M_{v}(g)(\zeta), \quad g= f \circ \psi, \quad z \in \Omega. \]
Fix $\sigma \in \B_p(\Omega)$, which equivalently means $u = \sigma \circ \psi \in \B_p(\mathbb D,v)$ (recall \eqref{e:intro-change}). By \eqref{e:Aw} and the obvious inequality 
	\[ M_v g \lesssim \sum_{j=1,2} \sum_{I \in \mathcal{D}_j} \langle g \rangle_{v, Q_I} \chi_{Q_I},\] 
$M_v$ is bounded on $L^p(\mathbb{D}, uv)$. Then notice, again using change of variable, and assuming $f \in L^p(\Omega,\sigma)$, 

\begin{align*}
\|M_{\psi}f\|_{L^p(\Omega,\sigma)} & \leq \|[M_v g ] \circ \psi^{-1} \|_{L^p(\Omega,\sigma)} \\
& = \|M_v g\|_{L^p(\mathbb{D}, uv)}\\
& \lesssim \|g\|_{L^p(\mathbb{D}, uv)}\\
& = \|f\|_{L^p(\Omega, \sigma)}.
\end{align*}
\end{proof}

To conclude this section, we state a convenient version of the Koebe distortion theorem.

\begin{proposition}\label{KoebeDistort}
There exists an absolute constant $K$ so that for all $I \in \mathbb{T}$, all $z_1,z_2 \in T_I$, and any $\psi: \mathbb D \to \mathbb C$ which is conformal, one has
\begin{equation}\label{e:koebe}K^{-1} \leq \frac{|\psi'(z_1)|}{|\psi'(z_2)|} \leq K. \end{equation}
\end{proposition}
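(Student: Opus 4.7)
The plan is to derive Proposition~\ref{KoebeDistort} from the classical Koebe distortion theorem together with the observation that each top half $T_I$ has bounded hyperbolic diameter, independent of $I$.

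First, I would invoke the M\"obius-invariant form of the Koebe distortion theorem: there is an absolute constant $C_0$ such that for every conformal map $\psi: \mathbb D \to \mathbb C$ and all $z,w \in \mathbb D$,
\[ \left| \log \frac{|\psi'(z)|(1-|z|^2)}{|\psi'(w)|(1-|w|^2)} \right| \leq C_0\, d_{\mathrm{hyp}}(z,w), \]
where $d_{\mathrm{hyp}}$ denotes hyperbolic distance on $\mathbb D$. This standard fact is proved by applying the usual normalized Koebe distortion estimate to $\psi \circ \tau_w$, where $\tau_w \in \mathrm{Aut}(\mathbb D)$ is a disk automorphism sending $0$ to $w$, and then taking logarithms.

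Next, I would check that the hyperbolic diameter of $T_I$ is bounded by an absolute constant $D$. A direct computation: for $z_1 = r_1 e^{\mathrm{i}\theta_1}, z_2 = r_2 e^{\mathrm{i}\theta_2} \in T_I$ with $\ell = \ell(I)$, the radial contribution to $d_{\mathrm{hyp}}(z_1,z_2)$ is at most $\int_{1-\ell}^{1-\ell/2} \frac{dr}{1-r^2} \lesssim 1$, and the angular contribution is at most $\int_{|\theta-\theta_0|\leq \ell/2} \frac{r\, d\theta}{1-r^2} \lesssim 1$, both independent of $\ell$. Moreover, for $z_1, z_2 \in T_I$ one has $1-|z_j| \in [\ell/2, \ell]$, so the ratio $(1-|z_1|^2)/(1-|z_2|^2)$ is bounded above and below by absolute constants.

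Combining these ingredients in the displayed inequality gives
\[ \left| \log \frac{|\psi'(z_1)|}{|\psi'(z_2)|} \right| \leq C_0 D + O(1) \]
uniformly in $\psi$ and $I$, which is exactly \eqref{e:koebe} with $K = \exp(C_0 D + O(1))$. No real obstacle arises: the two ingredients (the conformally invariant Koebe distortion and the hyperbolic diameter bound on $T_I$) are both standard, with only a short integral computation needed for the latter.
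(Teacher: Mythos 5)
Your proof is correct and follows essentially the same route as the paper: invoke the Koebe distortion corollary in its hyperbolic-distance form, then observe that $T_I$ has uniformly bounded hyperbolic diameter. The only cosmetic difference is that you cite the conformally invariant form with the $(1-|z|^2)$ factors and then bound those separately, whereas the paper cites a version of the corollary (Pommerenke Cor.~1.1.5) that already bounds $|\psi'(z_1)|/|\psi'(z_2)|$ directly by $e^{6\lambda(z_1,z_2)}$.
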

\begin{proof}
A classical corollary to the Koebe distorition theorem \cite{pommerenke-book}*{Cor 1.1.5} is that \eqref{e:koebe} holds for any $z_1,z_2 \in \mathbb D$ and $K=e^{6 \lambda(z_1,z_2)}$ where $\lambda$ is the hyperbolic metric in $\mathbb D$. However, there exists an absolute constant $r>0$ and points $z_I \in \mathbb D$ such that $T_I \subset \{ z \in \mathbb D : \lambda(z,z_I) \le r \}$, whence \eqref{e:koebe} follows with $K = e^{12r}$.
\end{proof}

\subsubsection{Example}\label{ss:example}
Next, to show   that   Theorem \ref{thm:weighted-est} is a non-trivial qualitative extension of Theorem \ref{thm:strong-quant}, let us construct a conformal map $\psi$ such that $|\psi'|^2$ belongs to $\B_{p}(\mathbb D)$ for all $p>1$, but not to $\B_1(\mathbb D)$. Define
	\[ \psi_0(z) = \frac{z}{\log z}.\]
This a conformal map on the disk $D(\frac 14, \frac 14)=\{z \in \mathbb C : |z-\frac 14| < \frac 14\}$ whose image is drawn in Figure \ref{f:domain}.
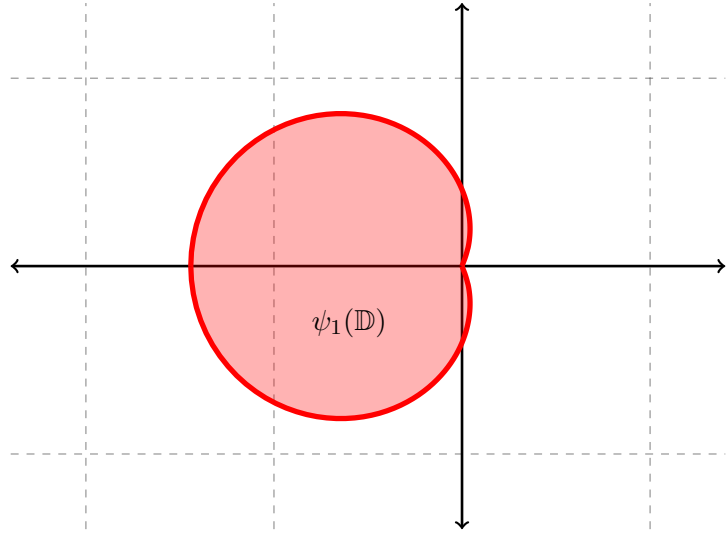
\begin{figure}\label{f:domain}
\begin{tikzpicture}[scale=5]
\draw[very thin,color=gray,step=.5cm,dashed] (-1.2,-0.7) grid (0.7,0.7);
\draw[<->, line width=1pt] (-1.2,0) -- (0.7,0);
\draw[<->, line width=1pt] (0,-0.7) -- (0,0.7);
\draw [red, line width=2pt,  domain=0:3.14, samples=75] 
 plot ({
	(
	(1+cos(\x r))*ln( cos(\x*0.5 r)*0.5 ) 
	+ sin(\x r)*\x*0.5
	)/
	(4*(ln( cos(\x*0.5 r)*0.5 ))^2+\x^2)
	} ,
	{
	(
	(sin(\x r))*ln( cos(\x*0.5 r)*0.5 ) 
	- (1+ cos(\x r))*\x*0.5
	)/
	(4*(ln( cos(\x*0.5 r)*0.5 ))^2+\x^2)
	});
\draw [red, line width=2pt,  domain=0:3.14, samples=75] 
 plot ({
	(
	(1+cos(\x r))*ln( cos(\x*0.5 r)*0.5 ) 
	+ sin(\x r)*\x*0.5
	)/
	(4*(ln( cos(\x*0.5 r)*0.5 ))^2+\x^2)
	} ,
	{
	(
	-(sin(\x r))*ln( cos(\x*0.5 r)*0.5 ) 
	+ (1+ cos(\x r))*\x*0.5
	)/
	(4*(ln( cos(\x*0.5 r)*0.5 ))^2+\x^2)
	});
\fill [red, fill opacity=0.3,  domain=0:3.14, samples=75] 
 plot ({
	(
	(1+cos(\x r))*ln( cos(\x*0.5 r)*0.5 ) 
	+ sin(\x r)*\x*0.5
	)/
	(4*(ln( cos(\x*0.5 r)*0.5 ))^2+\x^2)
	} ,
	{
	(
	(sin(\x r))*ln( cos(\x*0.5 r)*0.5 ) 
	- (1+ cos(\x r))*\x*0.5
	)/
	(4*(ln( cos(\x*0.5 r)*0.5 ))^2+\x^2)
	});
\fill [red, fill opacity=0.3,  domain=0:3.14, samples=75] 
 plot ({
	(
	(1+cos(\x r))*ln( cos(\x*0.5 r)*0.5 ) 
	+ sin(\x r)*\x*0.5
	)/
	(4*(ln( cos(\x*0.5 r)*0.5 ))^2+\x^2)
	} ,
	{
	(
	-(sin(\x r))*ln( cos(\x*0.5 r)*0.5 ) 
	+ (1+ cos(\x r))*\x*0.5
	)/
	(4*(ln( cos(\x*0.5 r)*0.5 ))^2+\x^2)
	});
\draw (-0.3,-0.15) node {$\psi_1(\mathbb D)$};
\end{tikzpicture}
\caption{$\psi_1(\mathbb D)$}
\end{figure}
Compute
	\[ |\psi_0'(z)|^2 = \left|\frac{\log z -1}{(\log z)^2} \right|^2 \sim \frac{1}{|\log z|^2}.\]
But $|\log(z)|^\alpha$ is integrable over $D(\frac 14,\frac 14)$ for every $\alpha \in \mathbb R$. Define $\psi_1(z)=\psi_0(\frac{z+1}{4})$ so that $\psi_1(\mathbb D)=\psi_0(D(\frac 14,\frac 14))$. Then $\psi_1$ is a conformal map on $\mathbb D$ with $|\psi_1'|^2 \in \B_{p}(\mathbb D)$ for each $p>1$. However, 
$\psi_1'(-1) = 0$ so $|\psi_1'|^2$ does not belong to $\B_1(\mathbb D)$.

\subsubsection{Asymptotically   conformal   domains}\label{ss:asymp-conf}
Given a domain $\Omega \subset \mathbb C$  which is bounded by a Jordan curve,   and two points $z_1,z_2 \in \partial \Omega$, define $\partial \Omega(z_1,z_2)$ to be the smaller arc of $\partial \Omega$ between the points $z_1$ and $z_2$. $\Omega$ is said to be asymptotically conformal if
 	\[ \max_{z \in \partial \Omega(z_1,z_2)} \frac{ |z-z_1| + |z-z_2| }{|z_1-z_2|} \to 1 \quad \mbox{as} \quad |z_1-z_2| \to 0.\]

\begin{lemma}\label{lemma:asymp-conf}
If $\Omega$ is asymptotically conformal, then $\abs{\psi'}^2 \in \B_{p}(\mathbb D)$ for every $p>1$.
\end{lemma}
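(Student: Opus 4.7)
The plan is to show that asymptotic conformality forces $|\psi'|$ to be nearly constant on each Carleson box $Q_I$ of small enough arclength; this will make the $\B_p$ test ratio uniformly close to $1$ at small scales. The remaining Carleson boxes form a finite family in each of the dyadic grids $\mathcal D_1, \mathcal D_2$ and can be handled one at a time.

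The core step is to establish the quantitative flatness of $|\psi'|$: for every $\epsilon>0$ there is a $\delta = \delta(\epsilon)>0$ such that
\begin{equation}\label{e:flat}
\sup_{z_1, z_2 \in Q_I} \left| \log\frac{|\psi'(z_1)|}{|\psi'(z_2)|} \right| \le \epsilon \qquad \text{whenever} \quad \ell(I) < \delta. \tag{$\ast$}
\end{equation}
My route is via the Pommerenke--Becker characterization that $\partial \Omega$ is asymptotically conformal if and only if $\log \psi' \in \VMOA$; see \cite{pommerenke-book}. A standard consequence is that the oscillation of $\log|\psi'|$ on any pseudohyperbolic disk of a fixed radius $r < 1$ tends to $0$ as the disk approaches $\partial \mathbb D$. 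Since $Q_I$ has pseudohyperbolic diameter bounded by an absolute constant strictly less than $1$, covering $Q_I$ by a bounded number of pseudohyperbolic disks of radius $1/4$ and chaining the oscillation estimate produces \eqref{e:flat}.

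Granted \eqref{e:flat}, for any $I$ with $\ell(I) < \delta$ and any $p > 1$, setting $c_I = \inf_{Q_I} |\psi'|$ and $C_I = \sup_{Q_I} |\psi'| \le e^\epsilon c_I$, we immediately obtain
\[ \avg{|\psi'|^2}_{Q_I} \avg{|\psi'|^{-\frac{2}{p-1}}}_{Q_I}^{p-1} \le C_I^2 \cdot c_I^{-2} \le e^{2\epsilon}, \]
a bound independent of $p$. For the remaining $I \in \mathcal D_1 \cup \mathcal D_2$ with $\ell(I) \ge \delta$ there are only finitely many choices, so it suffices to verify that each integral is finite for each such $I$. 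One has $\int_{Q_I} |\psi'|^2 \le |\Omega|$ by change of variable. For $\int_{Q_I} |\psi'|^{-\frac{2}{p-1}}$, decompose $Q_I$ into its top halves $T_J$, $J \subset I$ dyadic: by Koebe distortion (Proposition \ref{KoebeDistort}), $|\psi'|$ is comparable on each $T_J$ to its value at any reference point of $T_J$; applying \eqref{e:flat} to the dyadic ancestor of $J$ at a fixed scale $\delta/2$, together with the continuity and non-vanishing of $\psi'$ on the finitely many such ancestor boxes at that scale, yields a positive lower bound on $|\psi'|$ uniform over $z \in Q_I$. Summing gives $\int_{Q_I} |\psi'|^{-\frac{2}{p-1}} \, dA \le C(\Omega, \delta, p) |Q_I|$.

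The principal obstacle is \eqref{e:flat}: converting the boundary-geometric definition of asymptotic conformality into a quantitative oscillation estimate on $\log |\psi'|$ in the disk. This is the non-trivial content of the Pommerenke--Becker theorem, which I would invoke rather than reprove.
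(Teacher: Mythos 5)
The central claim of your plan, the flatness estimate ($\ast$), is false, and the argument you give for it does not go through. A Carleson box $Q_I$ does \emph{not} have pseudohyperbolic diameter bounded away from $1$: $Q_I$ contains the radial segment $\{re^{i\theta_0} : 1-\ell(I) < r < 1\}$, whose pseudohyperbolic diameter is exactly $1$ and whose \emph{hyperbolic} diameter is infinite. Consequently $Q_I$ cannot be covered by a bounded number of pseudohyperbolic disks of radius $1/4$, and the chaining argument breaks down. What the little Bloch / $\VMOA$ condition gives you is that the oscillation of $\log|\psi'|$ per unit hyperbolic distance is small near $\partial \mathbb{D}$; over the full depth of $Q_I$ the accumulated oscillation of $\log|\psi'|$ is typically unbounded, even for asymptotically conformal domains. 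So one cannot hope for $\sup_{Q_I}|\psi'| / \inf_{Q_I}|\psi'| \le e^{\epsilon}$ no matter how small $\ell(I)$ is; this is precisely what makes the lemma non-trivial.

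The paper's proof confronts this head-on. It also starts from $\log\psi' \in \VMOA$, but then writes $\log\psi' = g + h$ with $g$ a polynomial (bounded, so harmless) and $\|h\|_{\BMO}$ small. It does not claim $h$ has small oscillation on all of $Q_I$; instead, using the hyperbolic-Lipschitz bound $|h(z_1)-h(z_2)| \lesssim \|h\|_{\BMO}\,\lambda(z_1,z_2)$, it tracks the oscillation of $h$ on the $k$-th generation strips $T_{I_j^k}$ and finds it grows \emph{linearly} in $k$: $|h(z)-h(z_I)| \lesssim k\|h\|_{\BMO}$. The resulting contribution to $\avg{|\psi'|^2}_{Q_I}$ is a geometric series $\sum_k 2^{-k} e^{ck\|h\|_{\BMO}}$, which converges exactly because $\|h\|_{\BMO}$ was chosen small enough that $e^{c\|h\|_{\BMO}} < 2$. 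That quantitative bookkeeping — small $\BMO$ norm trading off against the hyperbolic depth — is the content that your plan is missing, and it cannot be replaced by a flat sup/inf estimate on $Q_I$.
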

\begin{proof}In this case, $\log \psi'$ belongs to the little Bloch space \cite{pommerenke-VMO}. But it is well-known \cite[Theorem 8.15]{zhu2007} that this is equivalent to $\log \psi'$ belonging to $\VMOA$ defined with respect to the Bergman metric. More precisely, $\log \psi' \in \text{Hol}(\mathbb{D})$ and satisfies for any fixed $r>0$:
\[    \lim_{R \rightarrow 0^{-}}\sup_{\substack{z:\\1>|z| \geq R}} \frac{1}{|\beta(z,r)|}                         \int_{\beta(z,r)} |\log \psi' -\log \psi'_z|^2 \, dA=0. \]
Here $\beta(z,r)$ denotes a disk in the hyperbolic metric centered at $z$ and of radius $r$, while for any function $f$, we write $f_z$ to indicate $|\beta(z,r)|^{-1} \int_{\beta(z,r)} f \, dA$, omitting the implicit dependence on $r$. We say $f \in \BMO$ if $f$ satisfies the weaker condition, that
\[ \norm{f}_{\BMO} := \sup_{z \in \mathbb{D}}\frac{1}{|\beta(z,r)|}                         \int_{\beta(z,r)} |f -f_z|^2 \, dA \]
is finite. Different choices of $r$ give rise to equivalent norms, so this space (along with $\VMOA$) is independent of $r>0$.
Moreover, any function in $\VMOA$ can be arbitrarily well-approximated in the $\BMO$ norm by elements of $C(\overline{\mathbb{D}})$ \cite[Proposition 8.12]{zhu2007}, and hence in particular by polynomials, which belong to $\VMOA$. An additional key fact \cite[pp.218]{zhu2007} that we will use is that any $f \in \BMOA= \BMO \cap \text{ Hol}(\mathbb{D})$ satisfies the Lipschitz-like condition with respect to the hyperbolic metric $\lambda$:
$$ |f(z_1)-f(z_2)| \leq C \|f\|_{\BMO} \lambda(z_1,z_2), \quad z_1,z_2\in \mathbb{D},$$
where $C$ is an independent constant. 

Let $p>1$ and write $\log \psi'=g+h$, where $g$ is a polynomial and $h \in \BMOA$ satisfying $\|h\|_{\BMO} \leq \frac{\log (3/2)}{6 C r} \min\{1, p-1\}.$ We will show that $|\psi'|^{2} \in \B_p(\mathbb D).$ Fix a Carleson tent $Q_I$ corresponding to $I \subset \mathbb{T}$ and let $I_j^k$ denote the dyadic descendants of $I$ belonging to the $k$-th generation, which  (so $k \geq 1$), $1 \leq j \leq 2^k.$ We then may choose $r>0$ and points $z_{j,k} \in T_{I_j^k}$ so 
 \[  T_{I_j^k} \subset \beta(z_{j,k},r),\quad |T_{I_j^k}| \sim |\
 \beta(z_{j,k},r)| \sim 2^{-2k} |Q_I| .\] 
Let $z_I$ be such a point corresponding to $T_I=T_{I_1^{0}}$. It is straightforward to see then that if $z \in T_{I_j^k},$ there holds 
\[\lambda(z, z_I) \leq (2k+1)r,  \quad |h(z)-h(z_I)| \leq 3 C r k \|h\|_{\BMO}.        \]
Then, we can directly estimate:
\begin{align*}
\frac{1}{|Q_I|} \int_{Q_I} |\psi'|^{2} & \leq e^{2\|g\|_{\infty}} \cdot e^{2 \text{Re}\, h(z_I)} \cdot\frac{1}{|Q_I|} \int_{Q_I} e^{2\text{Re}\,(h-h(z_I))}\\
& \leq \frac{e^{2(\|g\|_{\infty}+ \text{Re} \, h(z_I))}}{|Q_I|} \sum_{k\geq0} \sum_{j=1}^{2^k} \int_{T_{I_j^k}}e^{2|h-h(z_I)|} \\
& \lesssim e^{2(\|g\|_{\infty}+\text{Re} \, h(z_I))} \sum_{k\geq0} 2^{-k} e^{6 Crk \|h\|_{\BMO}}\\
& = e^{2(\|g\|_{\infty}+ \text{Re} \, h(z_I))} \sum_{k\geq0} \left( \frac{e^{6 Cr \|h\|_{\BMO}}}{2}\right)^k
\\ & \lesssim e^{2(\|g\|_{\infty}+ \text{Re} \, h(z_I))}
\end{align*}
A similar computation shows that

\[   \left(\frac{1}{|Q_I|} \int_{Q_I} |\psi'|^{-2/(p-1)}                    \right)^{p-1} \lesssim e^{2(\|g\|_{\infty}-\text{Re}\, h(z_I))} ,\]
completing the proof. 

\end{proof}
  We remark that a very similar result appears in the literature in \cite[Lemma 2.4]{limani2023}

\subsection{Uniform Domains and the $\D_p(\Omega)$ class}
We begin with the definition of uniform domains, which have also been called $(\varepsilon,\infty)$ domains in the literature.   Uniform domains, which coincide with quasicircles in the plane (see Proposition \ref{p:uniform} below), were first introduced by F. John in \cite{FJohn}, but have since found applications to many different areas of function theory (see, for example \cite{jones80, jones81}). A simply connected domain $\Omega$ bounded by a Jordan curve is called a \emph{uniform domain} if there exists $C>0$ such that for each $z_1,z_2 \in \partial\Omega$,
		\[ \sup_{z \in \partial \Omega(z_1,z_2)} |z_1-z|+|z_2-z|\le C |z_1-z_2|. \]
  Recall the notation $\partial \Omega(z_1,z_2)$ from Subsection \ref{ss:asymp-conf}. 

The next proposition, which is a well-known result originally due to Ahlfors, gives an equivalent definition for uniform domains.
\begin{proposition}\label{p:uniform}\cite{pommerenke-book}*{p. 94}
Let $\Omega$ be a bounded simply connected Jordan domain and $\psi$ a conformal map from $\mathbb D$ onto $\Omega$. The following are equivalent.
\begin{itemize}
	\item[A.]$\Omega$ is a uniform domain.
	\item[B.] There exists $\Psi : \mathbb C \to \mathbb C$ quasiconformal such that $\Psi = \psi$ on $\mathbb D$.
\end{itemize}
\end{proposition}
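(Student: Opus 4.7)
The plan is to use Ahlfors's classical characterization of quasicircles, since the uniform domain condition stated is exactly the Ahlfors three-point condition for the Jordan curve $\partial\Omega$. I will prove the two directions separately.

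For the direction B $\Rightarrow$ A, I would exploit the fact that planar quasiconformal maps are quasi-symmetric: there is a homeomorphism $\eta : [0,\infty) \to [0,\infty)$ (depending only on the dilatation of $\Psi$) such that
\[ \frac{|\Psi(w_1)-\Psi(w_2)|}{|\Psi(w_1)-\Psi(w_3)|} \le \eta\!\left( \frac{|w_1-w_2|}{|w_1-w_3|} \right), \qquad w_1,w_2,w_3 \in \mathbb C.\]
Given $z_1,z_2 \in \partial \Omega$ and $z$ on the shorter arc of $\partial\Omega$ between them, write $z_j = \Psi(w_j)$ and $z = \Psi(w)$ with $w_j,w \in \partial \mathbb D$. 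Since $w$ lies on the shorter arc of $\partial\mathbb D$ between $w_1$ and $w_2$, the ratio $|w-w_j|/|w_1-w_2|$ is bounded by an absolute constant. Applying the quasi-symmetry estimate twice, once to control $|z-z_1|/|z_1-z_2|$ and once to control $|z-z_2|/|z_1-z_2|$, produces a uniform constant $C=C(\eta)$ with $|z-z_1|+|z-z_2| \le C|z_1-z_2|$, which is the uniform domain condition.

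For the direction A $\Rightarrow$ B, the strategy is to construct a \emph{quasiconformal reflection} $\rho : \mathbb C \to \mathbb C$ across $\partial \Omega$, i.e.\ an orientation-reversing, quasiconformal involution that fixes $\partial \Omega$ pointwise and interchanges $\Omega$ with $\mathbb C \setminus \overline{\Omega}$. Once such $\rho$ is in hand, one defines
\[ \Psi(z) = \begin{cases} \psi(z), & z \in \overline{\mathbb D}, \\ \rho\bigl(\psi(1/\bar z)\bigr), & z \in \mathbb C \setminus \overline{\mathbb D}. \end{cases}\]
The map $z \mapsto 1/\bar z$ is the standard conformal reflection across $\partial \mathbb D$, $\psi$ has a continuous extension to $\overline{\mathbb D}$ (by Carathéodory, since $\partial\Omega$ is a Jordan curve), and $\rho$ fixes $\partial \Omega$, so the two pieces agree along $\partial \mathbb D$. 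Since $\psi$ is conformal on $\mathbb D$ and the composition on the exterior involves one anti-conformal map and one quasiconformal map, $\Psi$ is quasiconformal on $\mathbb C \setminus \partial \mathbb D$ with the same dilatation bound as $\rho$; the removability of $\partial\mathbb D$ for quasiconformal maps (it is a rectifiable Jordan curve) promotes this to quasiconformality on all of $\mathbb C$.

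The main obstacle, and the only genuinely non-trivial step, is the construction of $\rho$ from the three-point condition. The standard route is to use the Beurling--Ahlfors extension: the three-point condition implies that $\partial \Omega$ is locally bi-Lipschitz equivalent to a line under a quasi-symmetric reparametrization, and one uses the Beurling--Ahlfors extension of the boundary parametrization (together with partitions of unity near $\partial\Omega$) to build a global quasiconformal reflection. All details of this construction can be found in Pommerenke's book \cite{pommerenke-book}, so in practice I would simply cite the theorem there; I have only sketched the conceptual route above.
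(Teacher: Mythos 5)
The paper itself does not prove this proposition; it is stated with a citation to Pommerenke's book (p.~94), which is precisely Ahlfors's theorem that quasidisks are the Jordan domains satisfying the three-point condition. Your two directions follow the same Ahlfors route as the cited source: quasi-symmetry for B~$\Rightarrow$~A, and a quasiconformal reflection $\rho$ across $\partial\Omega$ welded to $\psi$ via $\Psi(z) = \rho(\psi(1/\bar z))$ on $\mathbb C \setminus \overline{\mathbb D}$ for A~$\Rightarrow$~B.

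There is a gap in your B~$\Rightarrow$~A argument. You assert that if $z$ lies on the smaller arc of $\partial\Omega$ between $z_1$ and $z_2$, then $w = \Psi^{-1}(z)$ lies on the shorter arc of $\partial\mathbb D$ between $w_1$ and $w_2$. This is not automatic: the boundary homeomorphism is merely quasisymmetric and can badly distort arcs, so the shorter arc of the circle need not correspond to the smaller arc of $\partial\Omega$. The fix is short. Let $\beta$ be the shorter arc of $\partial\mathbb D$ between $w_1$ and $w_2$, so $|w - w_j| \le |w_1 - w_2|$ for all $w \in \beta$; quasi-symmetry gives $|\Psi(w) - z_j| \le \eta(1)\,|z_1 - z_2|$ and hence $\operatorname{diam}\Psi(\beta) \le 2\eta(1)\,|z_1 - z_2|$. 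The smaller of the two complementary arcs of $\partial\Omega$ between $z_1$ and $z_2$ has diameter at most $\operatorname{diam}\Psi(\beta)$ (it either equals $\Psi(\beta)$ or has still smaller diameter), so every $z$ on it satisfies $|z - z_1| + |z - z_2| \le 4\eta(1)\,|z_1 - z_2|$. With this patch the B~$\Rightarrow$~A direction is complete; your A~$\Rightarrow$~B sketch (Carath\'eodory extension, removability of $\partial\mathbb D$, and the quasiconformal-reflection construction, which you appropriately defer to the literature) is sound.
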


As discussed in the introduction, while the definition for $\D_p(\Omega)$ in \eqref{e:Dp} is more intrinsic and geometric, the $\B_p(\Omega)$ characteristic defined in \eqref{e:BpO} has many advantages over it.   Not only does it allow us to leverage the dyadic structure of $Q_I$ on $\mathbb D$, which makes for cleaner computations, it also clearly connects to the regularity of the conformal map $\psi$ at the boundary, which is crucial for our results.  

To prove the equivalence of these classes for uniform domains, we will use part B. of Proposition \ref{p:uniform}. The two important properties of quasiconformal maps $\Psi$ which we will need are
\begin{enumerate}
\item $|J \Psi|:= |\partial \Psi|^2-|\overline{\partial} \Psi|^2$ belongs to $\A_\infty(\mathbb{C})$;
\item $\Psi$ is quasisymmetric.
\end{enumerate}
Let us explain these two properties. First, there are many equivalent definitions of the weight class $\A_\infty(\mathbb C)$ \cite{gc-book}*{Corollary IV.2.13 and Theorem IV.2.15}. We will say a weight $v$ belongs to $\A_\infty(\mathbb C)$ if there exists $C,\delta>0$ such that for each cube $Q \subset \mathbb C$ and each measurable set $E \subset Q$,
	\[ \frac{|E|}{|Q|} \le C \left( \frac{\int_E v \, dA}{\int_Q v \, dA} \right)^{\delta}.\]
Second, $f: \mathbb{C} \rightarrow \mathbb{C}$ is \emph{quasisymmetric} if there exists an increasing function $\eta: \mathbb{R}^{+} \rightarrow \mathbb{R}^{+}$ such that for $z_0,z_1,z_2 \in \mathbb{C}$,

$$ \frac{|f(z_1)-f(z_0)|}{|f(z_2)-f(z_0)|} \leq \eta \left(\frac{|z_1-z_0|}{|z_2-z_0|} \right).$$ 
It follows by taking $z_i=f^{-1}(w_i)$ that if $f$ is quasisymmetric, so is $f^{-1}$, though with different $\eta.$ 

(1) is a well-known result due to Gehring \cite{gehring}, see also \cite{astala-book}*{Theorem 13.4.2}. For (2), see \cite{astala-book}*{Theorem 3.5.3}.
With these two facts, we can prove the following lemma for $\psi$   which will   be crucial in showing the sharpness of our condition as well as the equivalence of $\D_p(\Omega)$ and $\B_p(\Omega)$.

\begin{lemma}\label{lem:ainfintytents}
Let $\Omega \subset \mathbb{C}$ be a bounded, simply connected uniform domain with $\psi: \mathbb{D} \rightarrow \Omega$ conformal and fix $I \subseteq \mathbb{T}.$ There exists $\delta>0$ such that for all measurable $E \subseteq Q_I$,
$$ \frac{|E|}{|Q_I|} \lesssim \left(\frac{|\psi(E)|}{|\psi(Q_I)|} \right)^\delta.$$

\begin{proof}
Let $I$ be an interval in $\mathbb T$. There exists a Euclidean cube $P \supseteq Q_I$ with $|P| \sim |Q_I|$. Since $|J \Psi| \in \A_{\infty}(\mathbb{C})$, there exists $\delta>0$ such that for any $E \subset Q_I \subset P$,
\begin{align*}
\frac{|E|}{|Q_I|} & \lesssim \frac{|E|}{|P|} \lesssim \left(\frac{\int_{E} |J \Psi| \, dA}{\int_{P} |J\Psi| \, dA} \right)^\delta \\
& \leq \left(\frac{\int_{E} |J \Psi| \, dA}{\int_{Q_I} |J\Psi| \, dA} \right)^\delta  = \left(\frac{\int_{E} |\psi'|^2 \, dA}{\int_{Q_I} |\psi'|^2} \, dA \right)^\delta\\
& = \left( \frac{|\psi(E)|}{|\psi(Q_I)|} \right)^\delta.
\end{align*}
\end{proof}
\end{lemma}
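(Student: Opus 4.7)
The plan is to reduce the claim to the standard $\A_\infty(\mathbb{C})$ property of the Jacobian of the quasiconformal extension. By Proposition \ref{p:uniform}, since $\Omega$ is a uniform domain, there exists a quasiconformal $\Psi:\mathbb{C}\to\mathbb{C}$ with $\Psi|_{\mathbb{D}}=\psi$. By the fact (1) cited just before the lemma (Gehring's theorem), the Jacobian $|J\Psi|$ is an $\A_\infty(\mathbb{C})$ weight with respect to Euclidean cubes. That is, there exist $C,\delta>0$ such that for every Euclidean cube $P\subset\mathbb{C}$ and every measurable $F\subset P$,
\[
\frac{|F|}{|P|} \leq C \left(\frac{\int_F |J\Psi|\,dA}{\int_P |J\Psi|\,dA}\right)^{\delta}.
\]

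The first step is to observe that every Carleson box $Q_I$ is contained in a Euclidean cube $P$ with $|P|\sim |Q_I|$. Geometrically, $Q_I$ has tangential extent $\ell(I)$ and radial depth also $\sim \ell(I)$ (using that $\ell(I)\leq 1$), so we can take a Euclidean square of side length a fixed multiple of $\ell(I)$ centered near the midpoint of $I$. This gives the needed comparability uniformly in $I$.

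Next, I would apply the $\A_\infty$ inequality above with $F=E$ and combine it with $|Q_I|\sim|P|$ and the monotonicity $\int_P |J\Psi|\,dA \geq \int_{Q_I} |J\Psi|\,dA$ to obtain
\[
\frac{|E|}{|Q_I|} \;\lesssim\; \left(\frac{\int_E |J\Psi|\,dA}{\int_{Q_I} |J\Psi|\,dA}\right)^{\delta}.
\]
Finally, since $\Psi$ is holomorphic on $\mathbb{D}$, $|J\Psi|=|\psi'|^2$ there, so by the change of variables formula for conformal maps $\int_F |\psi'|^2\,dA = |\psi(F)|$ for any measurable $F\subseteq\mathbb{D}$. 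Substituting this into numerator and denominator gives the claimed inequality.

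The only nontrivial step is the first: choosing the containing Euclidean cube. The $\A_\infty$ condition is a Euclidean statement while Carleson boxes are hybrid (radial/tangential) objects, but because $\mathbb{D}$ is bounded and $\ell(I)\leq 1$, the aspect ratio of $Q_I$ stays bounded and the comparability $|P|\sim|Q_I|$ is straightforward. Once the geometric step is in place, the conclusion is an immediate bookkeeping application of Gehring's theorem and the Jacobian identity for conformal maps.
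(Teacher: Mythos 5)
Your proof is correct and follows exactly the same path as the paper's: enclose $Q_I$ in a Euclidean cube $P$ of comparable area, apply Gehring's $\A_\infty$ property of $|J\Psi|$ on $P$, use monotonicity of the integral to pass from $P$ to $Q_I$ in the denominator, and convert $|J\Psi|=|\psi'|^2$ into image areas via the conformal change of variables. There is no substantive difference from the paper's argument.
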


We are ready to prove the promised equivalence.
\begin{proposition}\label{prop:BpO-psi}
Suppose that $\Omega$ is a bounded, simply connected uniform domain and $\psi$ maps $\mathbb D$ onto $\Omega$ conformally. Then, for any weight $\sigma$,
	\[  \brk{\sigma}_{\B_p(\Omega)} \sim\brk{\sigma}_{\D_p(\Omega)}. \]
\begin{proof}
The upper inequality will follow from the fact that $\Psi$ is quasisymmetric and the lower inequality from a similar argument for $\Psi^{-1}$. To prove the upper bound, let $Q_I \subset \mathbb{D}$, and let $z_I$ be a point in $Q_I$ satisfying $|z_I-z|\sim \ell(I)$ for every $z \in \partial Q_I$. Let $r= \min_{z \in \partial Q_I}|\psi(z)-\psi(z_I)|$ and $R= \max_{z \in \partial Q_I}|\psi(z)-\psi(z_I)|$. The quasisymmetry of $\Psi$ shows that $r \sim R$ and clearly 

$$ D(\psi(z_I),r) \subset \psi(Q_I) \subset D(\psi(z_I),R).$$
Pick some $p \in \partial \Omega \cap D(\psi(z_I),R)$, and by the triangle inequality, $D(\psi(z_I),R) \subset D(p,2R)$. All in all,
$$ \psi(Q_I) \subset  D(p,2R) \cap \Omega   $$ and

\begin{align*}
|\psi(Q_I)|&  \geq |D(\psi(z_I),r)| \sim r^2 \sim R^2 \sim |D(p,2R)|\\
& \geq |D(p,2R) \cap \Omega|.
\end{align*}
This computation shows that every $\psi(Q_I)$ is contained in a disk $D$ centered on $\partial \Omega$ of comparable area, so $[\sigma]_{\B_p(\Omega)} \lesssim [\sigma]_{\D_p(\Omega)}.$ 

On the other hand, let $p \in \partial \Omega$ and $s>0$. We want to find $Q_I$ such that 
	\begin{equation}\label{e:claim2} D(p,s) \cap \Omega \subset \psi(Q_I), \qquad \abs{ D(p,s) \cap \Omega} \gtrsim \abs{\psi(Q_I)},\end{equation}
from which $[\sigma]_{\D_p(\Omega)} \lesssim [\sigma]_{\B_p(\Omega)}$ immediately follows. 
As before, set $r= \min_{|z-p|=s}|\Psi^{-1}(z)-\Psi^{-1}(p)|$ and $R= \max_{|z-p|=s}|\Psi^{-1}(z)-\Psi^{-1}(p)|$. Again, since $\Psi$ is quasisymmetric, $r \sim R$ and with $q=\Psi^{-1}(p) \in \mathbb T$,
$$ D(q,r) \subset \Psi^{-1}(D(p,s)) \subset D(q,R).$$
Simple geometry shows that there exists $Q_I \supset D(q,R) \cap \mathbb{D}$ with $|Q_I| \sim R^2$. Clearly,
$$ D(p,s) \cap \Omega \subset \psi(Q_I).$$
To establish \eqref{e:claim2}, it remains to show $|\psi(Q_I)| \lesssim |D(p,s) \cap \Omega|.$ Thus, the proof is concluded by applying Lemma \ref{lem:ainfintytents} to obtain

$$1 \lesssim \left( \frac{|D(q,r) \cap \mathbb{D}|}{|Q_I|} \right)^{1/\delta} \lesssim\frac{|\psi(D(q,r) \cap \mathbb{D})|}{|\psi(Q_I)|} \lesssim \frac{|D(p,s) \cap \Omega|}{|\psi(Q_I)|}.  $$
\end{proof}
\end{proposition}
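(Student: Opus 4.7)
The plan is to prove the two inequalities $[\sigma]_{\B_p(\Omega)}\lesssim [\sigma]_{\D_p(\Omega)}$ and $[\sigma]_{\D_p(\Omega)}\lesssim[\sigma]_{\B_p(\Omega)}$ separately. In both directions I would use the quasiconformal extension $\Psi:\mathbb{C}\to\mathbb{C}$ of $\psi$ provided by Proposition \ref{p:uniform}. The common strategy is to match every region appearing in one supremum with a region appearing in the other, of comparable area and containing each other up to a fixed dilation, so that the averages of both $\sigma$ and $\sigma^{-1/(p-1)}$ over the matched regions are comparable. The key geometric input is the quasisymmetry of $\Psi$ (and hence of $\Psi^{-1}$), which controls inclusions of balls under the map; the $\A_\infty$-type reverse doubling from Lemma \ref{lem:ainfintytents} will be needed only in one direction.

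For $[\sigma]_{\B_p(\Omega)}\lesssim[\sigma]_{\D_p(\Omega)}$, I would enclose each image $\psi(Q_I)$ inside a Euclidean disk centered on $\partial\Omega$ of comparable area. Pick a ``center'' $z_I\in Q_I$ at distance $\sim\ell(I)$ from all of $\partial Q_I$, and set $r=\min_{z\in\partial Q_I}|\psi(z)-\psi(z_I)|$ and $R=\max_{z\in\partial Q_I}|\psi(z)-\psi(z_I)|$. Quasisymmetry of $\Psi$ forces $r\sim R$, so $D(\psi(z_I),r)\subset\psi(Q_I)\subset D(\psi(z_I),R)$. Since $Q_I$ meets $\mathbb{T}$, the second disk necessarily contains a point $p\in\partial\Omega$; the triangle inequality then yields $\psi(Q_I)\subset D(p,2R)\cap\Omega$ with $|D(p,2R)\cap\Omega|\lesssim R^2\sim r^2\lesssim|\psi(Q_I)|$, so the two regions are comparable.

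For the converse direction, given $D(p,s)$ with $p\in\partial\Omega$, I would apply the same sandwich idea in the preimage: with $q=\Psi^{-1}(p)\in\mathbb{T}$, the quasisymmetry of $\Psi^{-1}$ gives radii $r\sim R$ with $D(q,r)\subset\Psi^{-1}(D(p,s))\subset D(q,R)$. Take a Carleson box $Q_I\supset D(q,R)\cap\mathbb{D}$ of area $\sim R^2$; then $D(p,s)\cap\Omega\subset\psi(Q_I)$ is automatic. The crucial remaining task is the reverse area bound $|\psi(Q_I)|\lesssim|D(p,s)\cap\Omega|$: I would invoke Lemma \ref{lem:ainfintytents} with $E=D(q,r)\cap\mathbb{D}\subset Q_I$, noting $|E|\sim|Q_I|$, to conclude $|\psi(Q_I)|\lesssim|\psi(E)|\leq|D(p,s)\cap\Omega|$.

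I expect the main obstacle to be exactly this last reverse area comparison in the second direction. Quasisymmetry by itself only controls inclusions of image sets, not their areas, so without the $\A_\infty$-regularity of $|J\Psi|$ (Gehring's theorem) encoded in Lemma \ref{lem:ainfintytents}, there is no a priori reason why comparability of preimage areas should pass to comparability of image areas. The two inequalities in the proposition are therefore genuinely asymmetric in difficulty: the first follows from a soft sandwich using quasisymmetry alone, while the second requires the harder quantitative distortion estimate.
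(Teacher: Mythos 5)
Your proposal is correct and follows essentially the same route as the paper's proof: quasisymmetry in both directions to sandwich the regions between disks, and the $\A_\infty$ distortion estimate of Lemma \ref{lem:ainfintytents} to get the reverse area bound, used only in the harder direction. Your observation that $Q_I$ meets $\mathbb{T}$ so the larger disk automatically contains a boundary point of $\Omega$ is a small but worthwhile justification that the paper leaves implicit.
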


\section{Limited range weighted estimates for $\BP_{\Omega}$}\label{s:main-proof}
In addition to the weight classes $\B_p(\Omega)$ defined in \eqref{e:BpO}, for $q_0 \ge 1$, define
	\[ \B_{q_0^+}(\mathbb D) = \bigcap_{q > q_0} \B_q(\mathbb D),\]
and for $1<s<\infty$, define the reverse H\"older class $\RH_s(\Omega)$ to be the weights $\sigma$ on $\Omega$ such that the characteristic
	\[ \brk{\sigma}_{\RH_s(\Omega)} = \sup_{I \subseteq \mathbb{T}} \avg{ \sigma }_{s,\psi(Q_I)} \left( \avg{ \sigma }_{\psi(Q_I)} \right)^{-1} \]
is finite. The goal of this section is to prove the following of which Theorem \ref{thm:mainsufffull} from the introduction is the special case $p_0=q_0=1$.
\begin{theorem}\label{thm:weighted-est}
Let $\Omega$ be a bounded, simply connected domain and $\psi$ a conformal map from $\mathbb D$ onto $\Omega$. Let $1 \le p_0 < 2$ and set $q_0 = \frac{p_0}{2-p_0}$. If $\abs{\psi'}^2 \in \B_{q_0^+}(\mathbb D)$, $p_0 < p < p_0'$, and
	\begin{equation}\label{e:strong-qual}\sigma \in \left\{ \begin{array}{ll} \B_p(\Omega) & p_0=1, \\[2mm] \B_{\frac{p}{p_0}}(\Omega) \cap \RH_{\frac{p_0'}{p_0'-p}}(\Omega), & p_0 > 1, \end{array} \right. \end{equation}
then $\BP_{\Omega} : L^p(\Omega,\sigma) \to L^p(\Omega,\sigma)$.\end{theorem}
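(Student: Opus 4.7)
The plan is to combine the conformal change of variable \eqref{e:transform} with a sparse-domination argument that exploits Proposition \ref{prop:muck}. The hypothesis $|\psi'|^2 \in \B_{q_0^+}(\mathbb{D})$ guarantees that $\{\psi(Q_I)\}_I$ is a Muckenhoupt basis, which is the structural ingredient needed to transport the standard dyadic sparse domination of $\BP_{\mathbb{D}}$ (via the grids $\mathcal D_1, \mathcal D_2$ of \eqref{e:dyad}) to $\Omega$. Using the unitary equivalence $f \mapsto (f \circ \psi) \psi'$ from $L^2(\Omega)$ to $L^2(\mathbb{D})$ to transfer the sparse bound for $\BP_{\mathbb{D}}$, and Koebe distortion (Proposition \ref{KoebeDistort}) to treat $|\psi'|$ as essentially constant on each top half $T_I$, one obtains
\[|\BP_\Omega f(z)| \lesssim \sum_{I} \avg{|f|}_{\psi(Q_I)} \chi_{\psi(T_I)}(z), \qquad z \in \Omega.\]

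For the full-range case $p_0 = 1$ (which is Theorem \ref{thm:mainsufffull}), the right-hand side is controlled, by another change of variable $z = \psi(\zeta)$, by $\mathcal A_{\mathcal D, v}(f \circ \psi)(\zeta)$ with $v = |\psi'|^2$. Since $L^p(\Omega, \sigma) \cong L^p(\mathbb{D}, uv)$ with $u = \sigma \circ \psi$, the weighted sparse estimate \eqref{e:Aw} gives
\[\|\BP_\Omega f\|_{L^p(\Omega,\sigma)} \lesssim [u]_{\B_p(\mathbb{D},v)}^{\max\{1, 1/(p-1)\}} \|f\|_{L^p(\Omega,\sigma)},\]
and the identification \eqref{e:sigma-uv} of $[u]_{\B_p(\mathbb{D},v)}$ with $[\sigma]_{\B_p(\Omega)}$ closes the argument. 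For the limited-range case $p_0 > 1$, I would refine the sparse bound to its $p_0$-th power version $|\BP_\Omega f|^{p_0} \lesssim \widetilde{\mathcal A}(|f|^{p_0})$ and then apply the two-weight sparse estimate on $L^{p/p_0}(\Omega, \sigma)$. The $\B_{p/p_0}(\Omega)$ hypothesis controls the direct average, while the reverse H\"older $\RH_{p_0'/(p_0'-p)}(\Omega)$ supplies the dual integrability needed at the upper endpoint $p_0'$. The identity $q_0 = p_0/(2-p_0)$ arises from balancing the $p_0$-th power in the sparse bound against the Jacobian factor $|\psi'|^{2-p}$ in the change-of-variable weight.

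The main obstacle, as flagged in the introduction, is that $\B_p(\mathbb{D})$ weights do not in general satisfy reverse H\"older, which rules out a direct Johnson-Neugebauer-style verification that the change-of-variable weight $(\sigma\circ\psi)|\psi'|^{2-p}$ lies in $\B_p(\mathbb{D})$. The sparse route circumvents this by exploiting the genuinely two-weight nature of \eqref{e:Aw}: the Jacobian is absorbed into the averaging measure $v$, so the only B\'ekoll\`e-Bonami condition required on $u$ is relative to $v$, and this is exactly the hypothesis on $\sigma$ via \eqref{e:sigma-uv}. The technical heart of the proof is therefore the transport of the sparse domination itself, where the conformality of $\psi$ (through Koebe distortion) is essential for packaging the Jacobian factors $|\psi'|^2$ cleanly into the averaging weight rather than leaving them as pointwise distortions of the output.
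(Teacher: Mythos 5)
Your plan hinges on the claimed pointwise sparse domination
\[ |\BP_\Omega f(z)| \lesssim \sum_{I} \avg{|f|}_{\psi(Q_I)} \chi_{\psi(Q_I)}(z), \qquad z \in \Omega, \]
(I read $\chi_{\psi(T_I)}$ as a slip for $\chi_{\psi(Q_I)}$; with $T_I$ the right-hand side collapses to a single average, which cannot dominate $\BP_\Omega$). But this bound does not follow from transporting the sparse domination of $\BP_{\mathbb D}$ through $\psi$, and indeed it is generally false. Writing $g = (f\circ\psi)\psi'$ and using the transformation law $(\BP_\Omega f)\circ \psi = (\BP_{\mathbb D} g)/\psi'$, the sparse bound for $\BP_{\mathbb D}$ gives
\[ |\BP_\Omega f(\psi(\zeta))| \lesssim \frac{1}{|\psi'(\zeta)|} \sum_{I} \left(\frac{1}{|Q_I|}\int_{Q_I} |f\circ\psi|\,|\psi'|\, dA \right) \chi_{Q_I}(\zeta), \]
whereas $\avg{|f|}_{\psi(Q_I)} = \left(\int_{Q_I}|\psi'|^2\,dA\right)^{-1}\int_{Q_I}|f\circ\psi|\,|\psi'|^2\,dA$. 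The outer factor $|\psi'(\zeta)|^{-1}$ and the mismatched Jacobian powers ($|\psi'|$ inside, $|\psi'|^2$ in the target) cancel only when $|\psi'|$ has essentially constant order over the \emph{entire} Carleson box $Q_I$, not merely over the top half $T_I$ where Koebe distortion applies. This near-constancy over $Q_I$ is tantamount to a strong regularity assumption on $|\psi'|$ — roughly the $\B_1$ condition — which is precisely what the theorem is trying to avoid. In fact your argument never uses the hypothesis $|\psi'|^2 \in \B_{q_0^+}(\mathbb D)$ except to invoke the Muckenhoupt basis property; since Theorem \ref{thm:TFAE} shows this hypothesis is \emph{sharp} on uniform domains, a proof that does not exploit it in a substantive, quantitative way cannot be correct.

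The paper's route is different and does not sparse-dominate $\BP_\Omega$ directly. It reduces by \eqref{e:transform} to the one-weight problem of showing $uv^{1-p/2}\in\B_p(\mathbb D)$ where $u=\sigma\circ\psi$ and $v=|\psi'|^2$, which is exactly the residual-distortion problem you are trying to bypass. Theorem \ref{thm:main} (the technical core) attacks this via the dyadic regularization $u_{\mathcal D,v}$ of \eqref{e:reg}: Koebe distortion is used only to identify $u_{\mathcal D}\sim u_{\mathcal D,v}$ at the level of the disjoint top halves $T_I$, the regularization automatically satisfies the APR condition \eqref{e:APR}, Proposition \ref{prop:reverse Holder} then yields a self-improving reverse H\"older inequality in $L^\tau(v)$ for some $\tau>1$, and finally Proposition \ref{prop:reg} trades the small gain $\tau>1$ against the hypothesis $v\in\B_r(\mathbb D)$ for some $r>q_0$ to land in the unweighted class. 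It is in this last step that the hypothesis $|\psi'|^2\in\B_{q_0^+}(\mathbb D)$ is used in an essential, nontrivial way. Your proposal is missing this entire mechanism.
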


If $\abs{\psi'}^2$ belonged to the smaller class $\B_{q_0}(\mathbb D)$, this theorem would be more or less trivial for the same reasons leading to Theorem \ref{thm:strong-quant} above. We outline that argument below, after the statement of Theorem \ref{thm:main}, the main technical result from which Theorem \ref{thm:weighted-est} follows. 

For two weights $u,v$ on $\mathbb D$ and a dyadic grid $\mathcal D$ define the weighted characteristic
	\[ \brk{u}_{\B_p(\mathcal D,v)} = \sup_{I \in \mathcal D} \avg{ u}_{v,Q_I} \avg{u^{-1}}_{v,\frac{1}{p-1},Q_I}.\]
We say $u$ belongs to $\B_p(\mathcal D,v)$ if the above characteristic is finite, and we drop the dependence on $v$ when $v \equiv 1$. Similarly, we let 
\[ \brk{u}_{\RH_s(\mathcal D,v)} = \sup_{I \in \mathcal D} \avg{ u}_{s,v,Q_I} \left(\avg{u}_{v,Q_I}\right)^{-1}.\]

\begin{theorem}\label{thm:main}
Let $\psi:\mathbb D \to \mathbb C$ be conformal, $\mathcal D$ a dyadic grid on $\mathbb T$, and $q_0 \ge 1$. Suppose that $v=\abs{\psi'}^2 \in \B_{q_0^+}(\mathbb D)$. If
\begin{equation}\label{e:main-ss} u \in \left\{ \begin{array}{ll} \B_2(\mathcal D,v),& q_0=1; \\[2mm] \B_{\frac{q_0+1}{q_0}}(\mathcal D,v) \cap \RH_{q_0}(\mathcal D,v) & 1<q_0<\infty, \end{array} \right.\end{equation}
then $u \in \B_2(\mathcal D,v)$.
\end{theorem}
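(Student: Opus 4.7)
My plan is to treat both cases ($q_0 = 1$ and $q_0 > 1$) through a single Hölder-inequality framework that exploits the openness of $\B_{q_0^+}(\mathbb D)$: since $v \in \B_{q_0 + \ep}(\mathbb D)$ for some $\ep > 0$, I would first derive the transfer estimates
\[ \avg{u}_{Q_I} \lesssim \avg{u}_{q, v, Q_I}, \qquad \avg{u^{-1}}_{Q_I} \lesssim \avg{u^{-1}}_{q, v, Q_I}, \qquad q = q_0 + \ep, \]
by writing $u = uv^{1/q} \cdot v^{-1/q}$, applying Hölder with exponents $(q, q')$, and controlling the factor $\bigl(\int_{Q_I} v^{-1/(q-1)}\bigr)^{(q-1)/q}$ by the $\B_q(\mathbb D)$ characteristic of $v$. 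This converts Lebesgue averages of $u^{\pm 1}$ into $v$-weighted $L^q$-averages, which is the form in which the hypotheses on $u$ are available.

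For $q_0 > 1$ the hypothesis is calibrated for this chain. The $\RH_{q_0}(\mathcal D, v)$ condition (after choosing $q$ slightly above $q_0$ and absorbing a minor interpolation) converts $\avg{u}_{q, v, Q_I}$ into $\avg{u}_{v, Q_I}$, while the identity $\bigl((q_0+1)/q_0 - 1\bigr)^{-1} = q_0$ makes the $\B_{(q_0+1)/q_0}(\mathcal D, v)$ characteristic precisely $\avg{u}_{v, Q_I} \avg{u^{-1}}_{q_0, v, Q_I}$. The two hypotheses therefore interlock so that the product $\avg{u}_{v, Q_I} \avg{u^{-1}}_{v, Q_I}$, together with the factors coming from the transfer estimates, is bounded, delivering the conclusion.

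The main obstacle is the $q_0 = 1$ case, corresponding to Theorem \ref{thm:mainsufffull}. There the hypothesis $u \in \B_2(\mathcal D, v)$ alone does not yield any reverse-Hölder-type bound on $u$, which is exactly the difficulty highlighted in the introduction: $\B_p(\mathbb D)$ weights do not in general satisfy a reverse Hölder inequality. To bridge this gap I would use the conformality of $\psi$ through the Koebe distortion theorem (Proposition \ref{KoebeDistort}): $v = |\psi'|^2$ is comparable to a single constant on each top half $T_I$, so the $v$-weighted averages in the hypothesis unravel, on each $T_I$, into essentially unweighted ones multiplied by the local value of $v$. A dyadic decomposition of $Q_I$ over its generations $\{T_{I_j^k}\}$, paired with the sparse bound \eqref{e:Aw} and the $\B_{1+\ep}(\mathbb D)$ characteristic of $v$, should then telescope to recover the needed bound.

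The technical crux will be tracking the $\B_{q_0+\ep}$ constants as $\ep \to 0^+$ and ensuring the geometric summation over dyadic generations converges with the correct quantitative dependence — this is where the special form of $v$ from conformality, rather than a generic $\B_{q_0^+}$ weight, will be essential.
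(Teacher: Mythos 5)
Your plan correctly identifies the Hölder transfer step (which reproduces \eqref{e:trivial} with $q = q_0 + \ep$ in place of $q_0$), the exponent arithmetic for $q_0 > 1$, and the fact that Koebe distortion must enter through the conformality of $\psi$. However, there is a genuine gap at the very point you flag as a "minor interpolation."

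The transfer estimate gives $\avg{u}_{Q_I} \lesssim [v]_{\B_{q_0+\ep}}^{1/(q_0+\ep)} \avg{u}_{q_0+\ep,v,Q_I}$, but the hypothesis $u \in \RH_{q_0}(\mathcal D,v) \cap \B_{(q_0+1)/q_0}(\mathcal D,v)$ only controls the $q_0$-averages of $u$ and $u^{-1}$, not the $(q_0+\ep)$-averages. There is no interpolation mechanism that passes from a lower exponent to a higher one, and the relevant self-improvement (Gehring-type openness of reverse Hölder classes) is precisely what fails in general for $\B_p(\mathbb D)$ weights and for $\RH_s(\mathcal D,v)$ with generic $v$ --- this is exactly the obstruction the introduction highlights. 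Your $q_0 = 1$ plan has the same shape: after unraveling the $v$-averages on each $T_I$ via Koebe, you still have no mechanism to produce a reverse-Hölder gain from the hypothesis $u \in \B_2(\mathcal D,v)$ alone, and the appeal to the sparse bound \eqref{e:Aw} and "telescoping" over generations is not connected to any concrete estimate.

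What is missing is the dyadic regularization $u_{\mathcal D,v} = \sum_{I\in\mathcal D} \avg{u}_{v,T_I}\chi_{T_I}$. The paper's argument replaces $u$ by this regularization, which by construction satisfies the APR($\mathcal D$) condition \eqref{e:APR} while preserving the $v$-averages over Carleson boxes (Lemma \ref{lemma:reg}); the Koebe estimate of Proposition \ref{KoebeDistort} is then what makes $u_{\mathcal D} \sim u_{\mathcal D,v}$, so that one can pass freely between the unweighted and $v$-weighted regularizations. The hypothesis places $u^{q_0}$ in $\B_\infty(\mathcal D,v)$ via \eqref{e:contain}; Proposition \ref{prop:B2containment} and Proposition \ref{prop:reverse Holder} then show the APR regularization genuinely self-improves, producing $\tau > 1$ with $(u^{q_0})_{\mathcal D,v} \in \RH_\tau(\mathcal D,v)$. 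That gain $\tau$ is what bridges the gap: one applies Hölder with the larger exponent $r = q_0\tau$ (so that $v \in \B_r$ is available), and the reverse Hölder inequality for the regularization brings the $r$-average back down to the $q_0$-average. Without the regularization there is no APR weight in sight, hence no reverse Hölder gain, and the $\ep$-gap remains unbridged.
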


If $v$ belonged to the smaller class $\B_{q_0}(\mathbb D)$, this result would be trivial. Indeed, for $q_0=1$, consult \eqref{e:introB1Bp}. For $q_0>1$, $r=\frac{1+q_0}{q_0}$ satisfies $-q_0=\frac{1}{1-r}$. Therefore,
	\begin{equation}\label{e:trivial} \avg{ u }_{Q_I} \avg{ u^{-1} }_{Q_I} \le \brk{v}_{\B_{q_0}}^{\frac{2}{q_0}} \avg{ u }_{q_0,v,Q_I} \avg{ u^{-1} }_{q_0,v,Q_I}\le \brk{v}_{\B_{q_0}}^{\frac{2}{q_0}} \brk{u}_{\RH_{q_0}(v)} \brk{u}_{\B_r(v)}.\end{equation}

The key idea in the proof of Theorem \ref{thm:main} is to use dyadic versions of weight characteristics and apply a dyadic regularization. We remark a similar idea appears in \cite{huowick2022} when proving weighted inequalities for the vector-valued Bergman projection with matrix weights. 

Recall the weighted dyadic maximal operators $M_{v}^{\mathcal{D}}$ from \eqref{e:max} and the following paragraph. Define the class $\B_{\infty}(\mathcal{D},v)$ to be those weights $u$ satisfying
\begin{equation}\label{e:Binfty} \brk{u}_{\B_\infty(\mathcal{D},v)}:= \sup_{I \in \mathcal{D}} \frac{\avg{ M_v^{\mathcal{D}}(u \chi_{Q_I}) }_{v,Q_I}}{\avg{u}_{v,Q_I}}  < \infty.\end{equation}

\begin{proposition}\label{prop:B2containment}
If there exists $C>0$ and $0<s<1$ such that
	\begin{equation}\label{e:weakRH} \avg{u}_{v,Q_I} \le C \avg{u}_{s,v,Q_I}, \qquad \forall I \in \mathcal D,\end{equation}
then $u \in \B_\infty(\mathcal D,v)$.
\end{proposition}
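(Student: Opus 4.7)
The plan is a layer-cake plus Calderón-Zygmund argument driven by \eqref{e:weakRH}, which is a reverse Jensen-type inequality and should self-improve to yield the $\B_\infty$ estimate. The first step is to reduce to a local maximal operator. Fix $I \in \mathcal D$ and, for $z \in Q_I$, note that any $J \in \mathcal D$ with $Q_J \ni z$ is either contained in $I$ or strictly contains it. In the former case $\avg{u\chi_{Q_I}}_{v,Q_J} = \avg{u}_{v,Q_J}$, and in the latter $\avg{u\chi_{Q_I}}_{v,Q_J} \le \avg{u}_{v,Q_I}$. Since $J=I$ is admissible in the supremum, this gives on $Q_I$ the pointwise identity
\[ M_v^{\mathcal D}(u \chi_{Q_I})(z) = M^{\mathcal D,I}_v u(z) := \sup_{J \in \mathcal D,\, J \subseteq I,\, Q_J \ni z} \avg{u}_{v,Q_J}, \]
so the task reduces to bounding $\int_{Q_I} M^{\mathcal D,I}_v u \cdot v\, dA$ by a uniform multiple of $\int_{Q_I} u v\, dA$.

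Next, for each $\lambda > \lambda_0 := \avg{u}_{v,Q_I}$, I would run the standard dyadic Calderón-Zygmund stopping time on $u$ inside $Q_I$ at height $\lambda$, producing a disjoint family $\{Q_{J_k(\lambda)}\}$ of maximal dyadic subcubes of $Q_I$ with $\avg{u}_{v,Q_{J_k}} > \lambda$ and $\{M^{\mathcal D,I}_v u > \lambda\} = \bigcup_k Q_{J_k(\lambda)}$ exactly (no null set issue, by dyadic maximality). Applying the hypothesis \eqref{e:weakRH} on each $Q_{J_k}$ then yields
\[ \lambda^s\, v(Q_{J_k}) \le C^s \int_{Q_{J_k}} u^s v\, dA. \]

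The only step requiring genuine thought, and the one I would flag as the main subtlety, is converting the right-hand $u^s$ back into $u$, so that integrating in $\lambda$ produces $\int uv$ rather than the weaker $\int u^s v$ (which would at best give a weak-type bound). I would do this by choosing a threshold $\theta = \theta(s,C) \in (0,1)$ with $C^s \theta^s \le \tfrac{1}{2}$, splitting each $Q_{J_k}$ according to whether $u \le \theta\lambda$ or $u > \theta\lambda$, and absorbing the small-$u$ piece through the trivial bound $(\theta\lambda)^s v(Q_{J_k})$. This yields the improved weak-type inequality $v(\{M^{\mathcal D,I}_v u > \lambda\}) \le 2C^s \lambda^{-s} \int_{Q_I \cap \{u > \theta\lambda\}} u^s v\, dA$.

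Finally I would assemble the pieces via the layer cake:
\[ \int_{Q_I} M^{\mathcal D,I}_v u \cdot v\, dA = \lambda_0\, v(Q_I) + \int_{\lambda_0}^{\infty} v\bigl(\{M^{\mathcal D,I}_v u > \lambda\}\bigr)\, d\lambda, \]
substitute the improved weak-type bound, swap the order of integration by Fubini, and use $\int_{\lambda_0}^{u/\theta} \lambda^{-s}\, d\lambda \le (1-s)^{-1} (u/\theta)^{1-s}$ to collapse $u^s \cdot u^{1-s} = u$. The resulting estimate is $\int_{Q_I} u v\, dA$ times a constant depending only on $s$ and $C$, which after dividing by $v(Q_I)$ gives \eqref{e:Binfty}. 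Notably the argument needs no doubling assumption on $v$, since the dyadic stopping time partitions the level set exactly.
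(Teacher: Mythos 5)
Your argument is correct, with a final constant of the same quality in $s$ and $C$, but it takes a more hands-on route than the paper. The paper's proof is short: the hypothesis \eqref{e:weakRH} applied to each $J\subseteq I$ appearing in the supremum gives the pointwise inequality $M_v^{\mathcal D}(u\chi_{Q_I}) \le C\bigl[M_v^{\mathcal D}(u^s\chi_{Q_I})\bigr]^{1/s}$ on $Q_I$; one then integrates against $v\,dA$ over $Q_I$ and invokes the $L^{1/s}(\mathbb D,v)$-boundedness of $M_v^{\mathcal D}$, which holds for any weight $v$ and is already recorded in Section~\ref{ss:dyad} with norm at most $(1-s)^{-1}$, to obtain $\avg{M_v^{\mathcal D}(u\chi_{Q_I})}_{v,Q_I}\le C(1-s)^{-1/s}\avg{u}_{v,Q_I}$. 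You instead run the Calder\'on--Zygmund stopping time and a layer-cake computation, in effect re-deriving the $L^{1/s}$ bound from scratch in this particular instance; the threshold-absorption trick (choosing $\theta$ with $C^s\theta^s\le\tfrac{1}{2}$ so that the $\{u\le\theta\lambda\}$ contribution is absorbed and $u^s$ upgrades to $u$ after Fubini) is the genuinely nontrivial self-improvement step, and you execute it correctly. Both arguments are fully dyadic and, as you note, require no doubling of $v$. Your version is more self-contained; the paper's is more modular and considerably shorter because it treats the weighted dyadic maximal operator as a black box. Your reduction to the local operator $M_v^{\mathcal D,I}$ coincides with the first display of the paper's proof, so there is no divergence there.
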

\begin{proof}
Let $I \in \mathcal D$ and $z \in Q_I$. Then,
	\[ M_v^{\mathcal D}(u\chi_{Q_I})(z) = \sup_{\substack{J \in \mathcal D: \\ J \subset I \\ z \in Q_J}} \avg{u}_{v,Q_J} \le C \left[ M_v^{\mathcal D}(u^s \chi_{Q_I})(z) \right]^{\frac{1}{s}}.\]
Multiplying the above display by $v(z)$, integrating over $z \in Q_I$, and using the fact that $M_v^{\mathcal D}$ is bounded on $L^{1/s}(\mathbb D,v)$ concludes the proof.
\end{proof}

An immediate consequence of Proposition \ref{prop:B2containment} is
	\begin{equation}\label{e:contain} \bigcup_{1 < p < \infty} \left\{ u : u \in \B_p(\mathcal D,v) \right\} \cup \left\{w^p : w \in \RH_p(\mathcal D,v) \right\} \subset \B_\infty(\mathcal D,v).\end{equation}
We say a weight $u$ satisfies the APR($\mathcal D$) condition if there exists $C>0$ such that for each $I \in \mathcal D$,
	\begin{equation}\label{e:APR} C^{-1} u(z_2) \le u(z_1) \le C u(z_2) \qquad \forall z_1,z_2 \in T_I.\end{equation}
This is a special case of the condition introduced in \cite{APR} with $\rho=\frac{1}{2}$ (though the condition in \cite{APR} is qualitatively independent of $\rho$ and ranges over all intervals).   Such weights  appear extensively in the literature and have been referred to as weights with bounded hyperbolic oscillation \cite{dayan2023, limani2023}.  

\begin{proposition}\label{prop:reverse Holder}
Suppose that $v$ is a doubling weight on $\mathcal{D}$, and $w \in \B_\infty(\mathcal D,v)$ satisfies the APR($\mathcal D$) condition. Then there exists $\tau>1$ only depending on $[w]_{\B_\infty(\mathcal D,v)}$ and $c_v$ so that $w \in \RH_{\tau}(\mathcal{D},v)$.

\begin{proof}
This is a straightforward adaptation of the argument in \cite{stockdale2023weighted}*{Lemma 2.8}. First note that, as a consequence of the APR($\mathcal D$) condition for $w$, for any Carleson box $Q_I$ with $I \in \mathcal D$ and $z \in Q_I$, we have $w(z) \lesssim M_{v}^{\mathcal D} (w \chi_{Q_I})(z).$ Indeed, note that if $z \in Q_I$, there is a unique interval $J \subset I$ in $\mathcal{D}$ so $z \in T_{J}.$ We then estimate, using \eqref{e:APR} and the second inequality in \eqref{e:doubling}:
\[ w(z) \lesssim  \avg{ w}_{T_{J},v} \lesssim \avg{w}_{Q_{J},v} \leq M^{\mathcal D}_{v}(w \chi_{Q_I})(z).\]
Therefore, we have reduced the problem to showing that for some $\tau>1$,
	\begin{equation}\label{e:reduce}\avg{M_{v}^{\mathcal D}(w \chi_{Q_I}) }_{\tau,v, Q_I} \lesssim \avg{w}_{v,Q_I}.\end{equation}

Fix $I \in \mathcal D$ and for each $\lambda>0,$ let $\Omega_\lambda= \{z \in Q_I: M_{v}^{\mathcal D}(w \chi_{Q_I})(z)> \lambda\}.$ For each $\lambda>0,$ write $\Omega_\lambda$ as a disjoint union of maximal $Q_{\lambda, j}$ and let ${\widehat Q}_{\lambda, j}$ denote the corresponding dyadic parents. Note that by maximality 
	\begin{align}\label{e:maximality1}&\avg{w \chi_{Q_I}}_{v, {\widehat Q}_{\lambda, j}} \leq \lambda, \\
	\label{e:maximality2}&M_v^{\mathcal D}(w \chi_{Q_I})(z) = M_v^{\mathcal D}(w \chi_{Q_{\lambda,j}})(z) \quad \mbox{for} \quad z \in Q_{\lambda,j}.\end{align}
Using the distribution function, for any $\tau>1$, split
\begin{equation}\label{e:distribution}\begin{aligned} \int_{Q_I} &[M_{v} (w \chi_{Q_I})]^{\tau} v \, dA \\
	&=  \int_{0}^{\avg{ w }_{v, Q_I}}  (\tau-1) \lambda^{\tau-2} \left( \int_{\Omega_\lambda} M_{v}^{\mathcal D} (w \chi_{Q_I}) v \, dA \right) \, d \lambda\\
	& + \int_{\avg{ w }_{v, Q_I}}^{\infty} (\tau-1) \lambda^{\tau-2} \left( \int_{\Omega_\lambda} M_{v}^{\mathcal D} (w \chi_{Q_I}) v \, dA \right) \, d \lambda.\end{aligned} \end{equation}
By virtue of the containment $\Omega_\lambda \subset Q_I$ and \eqref{e:Binfty}, the first integral is trivially bounded by $\brk{w}_{\B_\infty(\mathcal{D},v)}\avg{w}_{v,Q_I}^\tau v(Q_I)$. To handle the second integral, notice that for each $\lambda$ and $j$, by \eqref{e:maximality2}, \eqref{e:Binfty}, \eqref{e:maximality1}, and \eqref{e:doubling},
	\[ \int_{Q_{\lambda,j}} M_{v}^{\mathcal D} (w \chi_{Q_I}) v \, dA \le [w]_{\B_{\infty}(\mathcal D, v)} \int_{Q_{\lambda,j}} wv \, dA
	\le [w]_{\B_{\infty}(\mathcal D, v)} c_v \lambda \int_{Q_{\lambda,j}} v \, dA.\]
Summing this over $j$, we obtain
	\[ \begin{aligned} \int_{\avg{ w }_{v, Q_I}}^{\infty} &(\tau-1) \lambda^{\tau-2} \left( \int_{\Omega_\lambda} M_{v}^{\mathcal D} (w \chi_{Q_I}) v \, dA \right) \, d \lambda \\
		&\le [w]_{\B_\infty(\mathcal{D}, v)}\times c_v \times \frac{\tau-1}{\tau} \int_{\avg{ w }_{v, Q_I}}^{\infty} \tau \lambda^{\tau-1} v(\Omega_\lambda)\, d\lambda. \end{aligned} \]
Choosing $\tau = \frac{2c_{v} [w]_{\B_\infty(\mathcal{D},v)} }{2 c_{v}[w]_{\B_\infty(\mathcal{D},v)} -1}$, the final term is bounded by $\frac{1}{2}$ times the LHS of \eqref{e:distribution}; therefore \eqref{e:reduce} is established.
\end{proof} 
\end{proposition}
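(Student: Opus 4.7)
The plan is to run a Calderón–Zygmund stopping-time argument inside a fixed Carleson box $Q_I$, using the $\B_\infty(\mathcal D,v)$ condition as the self-improving ingredient and the APR($\mathcal D$) regularity to replace $w$ by its dyadic maximal function pointwise. This closely parallels \cite{stockdale2023weighted}*{Lemma 2.8}, but I need to keep careful track of $c_v$ since $v$ is only assumed doubling.

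First I would establish the pointwise bound $w(z) \lesssim M_v^{\mathcal D}(w\chi_{Q_I})(z)$ for every $z \in Q_I$. Any such $z$ lies in a unique top half $T_J$ with $J \in \mathcal D$, $J \subset I$; the APR($\mathcal D$) condition \eqref{e:APR} gives $w(z) \lesssim \avg{w}_{v,T_J}$, and then the second inequality in \eqref{e:doubling} comparing $\int_{T_J} v$ to $\int_{Q_J} v$ yields $\avg{w}_{v,T_J} \lesssim \avg{w}_{v,Q_J} \le M_v^{\mathcal D}(w\chi_{Q_I})(z)$. Combined with the trivial lower bound, this reduces proving $w \in \RH_\tau(\mathcal D,v)$ to the inequality
\[ \avg{M_v^{\mathcal D}(w\chi_{Q_I})}_{\tau,v,Q_I} \lesssim \avg{w}_{v,Q_I} \]
for a single $\tau > 1$ depending only on $c_v$ and $[w]_{\B_\infty(\mathcal D,v)}$.

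Next, fix $I \in \mathcal D$ and, for $\lambda > 0$, set $\Omega_\lambda = \{z \in Q_I : M_v^{\mathcal D}(w\chi_{Q_I})(z) > \lambda\}$. Since $M_v^{\mathcal D}$ is a dyadic maximal function, $\Omega_\lambda$ decomposes as a disjoint union of maximal dyadic boxes $\{Q_{\lambda,j}\}_j \subset \mathcal D$; by maximality their dyadic parents $\widehat Q_{\lambda,j}$ satisfy \eqref{e:maximality1}, and the localization property \eqref{e:maximality2} holds on each $Q_{\lambda,j}$. Writing the $L^\tau$ norm via the distribution function and splitting at $\lambda = \avg{w}_{v,Q_I}$ yields the decomposition \eqref{e:distribution}. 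The lower range $0 < \lambda \le \avg{w}_{v,Q_I}$ is bounded trivially by $[w]_{\B_\infty(\mathcal D,v)} \avg{w}_{v,Q_I}^\tau v(Q_I)$ using $\Omega_\lambda \subset Q_I$ and the definition \eqref{e:Binfty} of $\B_\infty(\mathcal D,v)$. For the upper range, on each $Q_{\lambda,j}$ apply \eqref{e:maximality2}, then \eqref{e:Binfty} to replace $M_v^{\mathcal D}(w\chi_{Q_{\lambda,j}})$ by $w$, then \eqref{e:maximality1} and doubling of $v$ from \eqref{e:doubling} applied to the parent, giving
\[ \int_{Q_{\lambda,j}} M_v^{\mathcal D}(w\chi_{Q_I})\,v\,dA \le [w]_{\B_\infty(\mathcal D,v)} \int_{Q_{\lambda,j}} wv\,dA \le [w]_{\B_\infty(\mathcal D,v)} c_v \lambda\, v(Q_{\lambda,j}). \]
Summing over $j$ and rewriting $(\tau-1)\lambda^{\tau-2}\cdot\lambda\, v(\Omega_\lambda)$ as $\frac{\tau-1}{\tau} \cdot \tau\lambda^{\tau-1} v(\Omega_\lambda)$ turns the upper range into $[w]_{\B_\infty(\mathcal D,v)} c_v \frac{\tau-1}{\tau}$ times the full left-hand side of \eqref{e:distribution}.

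The main obstacle — and the only delicate choice — is picking $\tau$ to close the bootstrap: take
\[ \tau = \frac{2 c_v [w]_{\B_\infty(\mathcal D,v)}}{2 c_v [w]_{\B_\infty(\mathcal D,v)} - 1} > 1 \]
so the self-coupling constant equals $\tfrac12$; absorbing this half of the LHS gives the desired bound with $\tau$ depending only on $c_v$ and $[w]_{\B_\infty(\mathcal D,v)}$. The one subtlety beyond bookkeeping is that the absorption is only legitimate when the $L^\tau$ integral is a priori finite. I would handle this by running the entire argument with $w$ replaced by $w_N := \min(w, N)$: the truncation preserves APR($\mathcal D$) with the same constant and does not increase $[w]_{\B_\infty(\mathcal D,v)}$ (since $M_v^{\mathcal D}(w_N\chi_{Q_I}) \le \min(M_v^{\mathcal D}(w\chi_{Q_I}), N)$ after a short check), while making the left-hand side finite; monotone convergence as $N \to \infty$ then yields the full reverse Hölder bound.
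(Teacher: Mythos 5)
Your argument is essentially identical to the paper's: the same APR-plus-doubling pointwise reduction to \eqref{e:reduce}, the same stopping-box decomposition of $\Omega_\lambda$ with \eqref{e:maximality1}--\eqref{e:maximality2}, the same splitting \eqref{e:distribution} of the distribution-function integral at $\avg{w}_{v,Q_I}$, and the same choice of $\tau$ for the absorption. The one place you go beyond the paper is the a priori finiteness needed to absorb, which is a fair point, but your fix is not justified as stated: from $M_v^{\mathcal D}(w_N\chi_{Q_I}) \le \min\bigl(M_v^{\mathcal D}(w\chi_{Q_I}),N\bigr)$ you cannot conclude $[w_N]_{\B_\infty(\mathcal D,v)} \le [w]_{\B_\infty(\mathcal D,v)}$, because the denominator $\avg{w_N}_{v,Q_I}$ in \eqref{e:Binfty} also decreases under truncation, so the ratio need not be controlled without further argument (APR is preserved, but that is not the issue). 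A cleaner repair that avoids re-estimating any constants is to truncate the maximal function rather than the weight: run the argument for $\int_{Q_I} \min\bigl(M_v^{\mathcal D}(w\chi_{Q_I}),N\bigr)^\tau v\,dA \le N^\tau v(Q_I) < \infty$, noting that the superlevel sets coincide with $\Omega_\lambda$ for $\lambda < N$ and are empty for $\lambda \ge N$, so both halves of \eqref{e:distribution} and the absorption go through verbatim with the same constants, and then let $N \to \infty$ by monotone convergence.
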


Given any two weights $u$, $v$, we introduce the following regularization associated to a dyadic grid $\mathcal{D}$:

\begin{equation}\label{e:reg} u_{\mathcal{D},v}:= \sum_{I \in \mathcal{D}}  \langle u \rangle_{v,T_I} \chi_{T_I}.\end{equation} 
When $v \equiv 1$, we simply write $u_{\mathcal{D}}.$ Let us list some elementary properties of the regularizations. 
\begin{lemma}\label{lemma:reg}
\begin{itemize}
\item[(i)] $u_{\mathcal D,v}$ satisfies the APR($\mathcal D$) condition.
	\item[(ii)] For each $I \in \mathcal D$, $\avg{u}_{v,Q_I} = \avg{u_{\mathcal D,v}}_{v,Q_I}$.
	\item[(iii)] For $0<s\le 1$, $(u^s)_{\mathcal D,v} \le u_{\mathcal D,v}^s$.
	\item[(iv)] If $u$ satisfies \eqref{e:weakRH}, then so does $u_{\mathcal D,v}$. 
\end{itemize}
\end{lemma}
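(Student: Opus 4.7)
The plan is to handle each of the four claims by direct computation, exploiting the fact that $u_{\mathcal{D},v}$ is piecewise constant on the partition $\{T_I\}_{I \in \mathcal D}$ of $\mathbb D$ and that every $Q_I$ decomposes as the disjoint union of $T_J$ with $J \in \mathcal D$, $J \subseteq I$ (property (ii) in Subsection~\ref{ss:dyad}).

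For (i), observe that $u_{\mathcal{D},v}$ is by definition constant on each $T_I$, so the APR$(\mathcal D)$ inequality \eqref{e:APR} holds trivially with constant $C=1$. For (ii), the decomposition $Q_I = \bigsqcup_{J \in \mathcal D,\; J \subseteq I} T_J$ gives
\[
\int_{Q_I} u_{\mathcal{D},v}\, v\, dA = \sum_{J \subseteq I} \avg{u}_{v,T_J} \int_{T_J} v\, dA = \sum_{J \subseteq I} \int_{T_J} u\, v\, dA = \int_{Q_I} u\, v\, dA,
\]
and dividing by $v(Q_I) = \int_{Q_I} v\, dA$ yields $\avg{u_{\mathcal{D},v}}_{v,Q_I} = \avg{u}_{v,Q_I}$.

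For (iii), fix $I \in \mathcal D$ and a point $z \in T_I$. Then $(u^s)_{\mathcal D,v}(z) = \avg{u^s}_{v,T_I}$ while $u_{\mathcal D,v}(z)^s = \avg{u}_{v,T_I}^s$. Since $0 < s \le 1$, the function $t \mapsto t^s$ is concave on $[0,\infty)$, so Jensen's inequality applied to the probability measure $\frac{v\, dA}{v(T_I)}$ on $T_I$ gives $\avg{u^s}_{v,T_I} \le \avg{u}_{v,T_I}^s$, and (iii) follows.

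For (iv), I would combine the previous parts. Using (ii) for $u^s$ (which is also a weight),
\[
\avg{u_{\mathcal D,v}}_{s,v,Q_I}^s = \frac{1}{v(Q_I)} \int_{Q_I} u_{\mathcal D,v}^s\, v\, dA \ge \frac{1}{v(Q_I)} \int_{Q_I} (u^s)_{\mathcal D,v}\, v\, dA = \avg{u^s}_{v,Q_I} = \avg{u}_{s,v,Q_I}^s,
\]
where the inequality is (iii) and the second equality is (ii) applied to $u^s$. Raising to the $1/s$ power and then using the reverse direction together with (ii) applied to $u$ itself,
\[
\avg{u_{\mathcal{D},v}}_{v,Q_I} = \avg{u}_{v,Q_I} \le C \avg{u}_{s,v,Q_I} \le C \avg{u_{\mathcal{D},v}}_{s,v,Q_I},
\]
which is precisely \eqref{e:weakRH} for $u_{\mathcal D,v}$. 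Since every step reduces to an averaging identity, Jensen's inequality, or the dyadic partition structure, no step is a genuine obstacle; the only mild subtlety is remembering to apply (ii) twice in (iv), once to $u$ and once to $u^s$, so that the two sides of \eqref{e:weakRH} can both be compared to the corresponding averages of $u_{\mathcal D,v}$.
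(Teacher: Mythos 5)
Your proposal is correct and follows essentially the same route as the paper: (i) and (ii) by direct computation from the partition $\{T_J\}$, (iii) by concavity of $t\mapsto t^s$ (equivalently, H\"older's inequality), and (iv) by chaining (ii), the hypothesis \eqref{e:weakRH}, (ii) applied to $u^s$, and (iii). The paper's one-line chain of inequalities for (iv) matches your argument after reordering the steps.
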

\begin{proof}
Statements (i) and (ii) are trivial. Statement (iii) follows from H\"older's inequality. Statement (iv) follows by applying (ii), \eqref{e:weakRH}, (ii) again, and (iii) to obtain
	\[ \avg{u_{\mathcal D,v} }_{v,Q_I} = \avg{u}_{v,Q_I} \le C \avg{u}_{s,v,Q_I} = C \avg{ (u^s)_{\mathcal D,v} }_{v,Q_I}^{\frac{1}{s}} \le C \avg{u_{\mathcal D,v} }_{s,v,Q_I}.\]
\end{proof}
We are now ready for the final ingredient to the proof of Theorem \ref{thm:main}.

\begin{proposition}\label{prop:reg}
Let $v = \abs{\psi'}^2$ for $\psi$ which is conformal on $\mathbb D$. Let $1 \le q < \infty$ such that $u^q$ satisfies \eqref{e:weakRH}. Then there exists $r>q$ such that
	\[ \avg{ u }_{Q_I} \lesssim [v]_{\B_{r}(\mathbb D)}^{\frac{1}{r}} \avg{ u }_{q,v,Q_I}, \qquad \forall I \in \mathcal D.\]
\end{proposition}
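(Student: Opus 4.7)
The plan is to reduce to the reverse Hölder inequality of Proposition \ref{prop:reverse Holder} via the regularization \eqref{e:reg}, and then apply Hölder's inequality to exchange an unweighted average for a $v$-weighted one, paying the price of a $\B_r$ characteristic of $v$.

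First, I would bound $\avg{u}_{Q_I}$ by the unweighted average of $(u^q)_{\mathcal D,v}^{1/q}$ over $Q_I$. The Koebe distortion theorem (Proposition \ref{KoebeDistort}) implies that $v=\abs{\psi'}^2$ is APR($\mathcal D$), so $\avg{u^q}_{T_J} \sim \avg{u^q}_{v,T_J}$ for every $J \in \mathcal D$. Combining this with Jensen's inequality for $t \mapsto t^q$ on $T_J$ gives, for each $z \in T_J$,
\[ \avg{u}_{T_J} \le \avg{u^q}_{T_J}^{1/q} \lesssim \avg{u^q}_{v,T_J}^{1/q} = (u^q)_{\mathcal D,v}(z)^{1/q}. \]
Integrating over $T_J$ and summing across the top halves in $Q_I$ yields $\avg{u}_{Q_I} \lesssim \avg{(u^q)_{\mathcal D,v}^{1/q}}_{Q_I}$.

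Next, I would invoke Proposition \ref{prop:reverse Holder} on the regularization. By Lemma \ref{lemma:reg}(i) and (iv), the weight $(u^q)_{\mathcal D,v}$ satisfies APR($\mathcal D$) and inherits \eqref{e:weakRH} from $u^q$, so Proposition \ref{prop:B2containment} places it in $\B_\infty(\mathcal D,v)$, and Proposition \ref{prop:reverse Holder} produces some $\tau > 1$ with
\[ \avg{(u^q)_{\mathcal D,v}^\tau}_{v,Q_I} \lesssim \avg{(u^q)_{\mathcal D,v}}_{v,Q_I}^\tau = \avg{u^q}_{v,Q_I}^\tau, \]
where the final equality uses Lemma \ref{lemma:reg}(ii).

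Finally, setting $r = q\tau > q$ (so that $v \in \B_r(\mathbb D)$ in the relevant regime $r > q_0$), Hölder's inequality with exponents $(r, r')$ applied to the factorization $(u^q)_{\mathcal D,v}^{1/q} = \bigl[(u^q)_{\mathcal D,v}^{1/q} v^{1/r}\bigr] v^{-1/r}$, together with the bound from the previous step, gives
\[ \avg{(u^q)_{\mathcal D,v}^{1/q}}_{Q_I} \lesssim \avg{u^q}_{v,Q_I}^{1/q} \avg{v}_{Q_I}^{1/r} \avg{v^{-1/(r-1)}}_{Q_I}^{1/r'} \le [v]_{\B_r(\mathbb D)}^{1/r} \avg{u}_{q,v,Q_I}, \]
which together with Step 1 yields the proposition. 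The main obstacle will be Step 1: since neither $u$ nor $u^q$ is assumed APR, Proposition \ref{prop:reverse Holder} cannot be applied to $u^q$ directly, forcing the detour through the regularization $(u^q)_{\mathcal D,v}$. What makes the scheme work is that $v$ itself has APR (a conformal, not merely an $\A_\infty$, consequence of Koebe), allowing Step 1 to pass between unweighted and $v$-weighted averages on each top half at the cost of only the Koebe constant.
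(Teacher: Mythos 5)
Your proof is correct and follows essentially the same route as the paper's: regularize via $(u^q)_{\mathcal D,v}$, use Koebe distortion (Proposition \ref{KoebeDistort}) to pass between unweighted and $v$-weighted averages on top halves, apply Propositions \ref{prop:B2containment} and \ref{prop:reverse Holder} to get a reverse H\"older exponent $\tau$ for the regularization, and close with H\"older's inequality to produce the $[v]_{\B_r(\mathbb D)}^{1/r}$ factor with $r=q\tau$. The only cosmetic difference is ordering: the paper performs the H\"older step on $u_{\mathcal D}$ first and then uses Koebe plus Lemma \ref{lemma:reg}(iii) to pass to $(u^q)_{\mathcal D,v}$, whereas you apply Jensen and Koebe at the level of each $T_J$ up front and thus work with $(u^q)_{\mathcal D,v}^{1/q}$ throughout.
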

\begin{proof}
Let $I \in \mathcal D$ and notice for any $r>1$,
\begin{equation}\label{e:nothing}\begin{aligned} \avg{ u }_{Q_I} &= \avg{ u_{\mathcal D} }_{Q_I} \\
			&= \avg{ u_{\mathcal D}v^{\frac{1} {r}} v^{ -\frac{1}{r}} }_{Q_I} \\
			&\le \avg{  u_{\mathcal D}^{r} v }_{Q_I}^{\frac 1r} \avg{ v^{-\frac{1}{r-1}} }_{Q_I}^{\frac {r-1}{r}} \\
			&\le \brk{v}_{\B_r}^{\frac 1r} \avg{ u_{\mathcal D} }_{r,v,Q_I}.
			\end{aligned} \end{equation}
However, since $v= \abs{\psi'}^2$ and $\psi$ is conformal, by the Koebe distortion theorem (Proposition \ref{KoebeDistort}), $\avg{ u }_{T_I} \sim \avg{ u }_{v,T_I}$. Thus $u_{\mathcal D} \sim u_{\mathcal D,v}$ which combined with \eqref{e:nothing} implies
	\[ \avg{ u }_{Q_I} \lesssim \brk{ v }_{\B_r}^{\frac 1 r} \avg{ u_{\mathcal D,v} }_{r,v, Q_I}.\]
By Lemma \ref{lemma:reg} (i) and (iv), $(u^q)_{\mathcal D,v}$ satisfies the APR($\mathcal D$) condition and \eqref{e:weakRH}, so Propositions \ref{prop:B2containment} and \ref{prop:reverse Holder} provide $\tau >1$ such that 
	\[ \avg{u_{\mathcal D,v}}_{q\tau,v,Q_I}^q \le \avg{(u^q)_{\mathcal D,v}}_{\tau,v,Q_I} \lesssim \avg{(u^q)_{\mathcal D,v}}_{v,Q_I} = \avg{u}_{q,v,Q_I}^q.\]
Taking $r=q\tau>q$ concludes the proof.
\end{proof}

Now the proof of Theorem \ref{thm:main} is immediate. Let $q_0 \ge 1$ and let $u$ belong to the RHS of \eqref{e:main-ss}. By \eqref{e:contain}, $u^{q_0}$ and $u^{-q_0}$ both satisfy  \eqref{e:weakRH}. Therefore, applying Proposition \ref{prop:reg} to $u$ and $u^{-1}$ with exponent $q=q_0$, gives $r_1,r_2> q_0$ such that \eqref{e:nothing} holds for the respective weights and parameters. Therefore, setting $r^* = \min\{r_1,r_2\} > q_0$, for each $I \in \mathcal D$,
	\begin{equation}\label{e:uup}
	\begin{aligned} \avg{ u }_{Q_I} \avg{ u^{-1} }_{Q_I}
		&\lesssim \brk{v}_{\B_{r^*}(\mathbb D)}^{\frac{2}{r^*}} \avg{u}_{q_0,v,Q_I} \avg{ u^{-1} }_{q_0,v,Q_I} \\
		& \le \brk{v}_{\B_{r^*}(\mathbb D)}^{\frac{2}{r^*}}  \left\{ \begin{array}{ll} \brk{u}_{\RH_{q_0}(\mathcal D,v)} \brk{u}_{\B_{\frac{q_0+1}{q_0}}(\mathcal D,v)}, & q_0>1; \\
				\brk{u}_{\B_2(\mathcal D,v)}, & q_0=1. \end{array} \right. \end{aligned} 
	\end{equation}

To derive Theorem \ref{thm:weighted-est} from Theorem \ref{thm:main}, let $\sigma$ be as in \eqref{e:strong-qual} with $p=2$. Setting $u = \sigma \circ \psi$ and $v = |\psi'|^2$, \eqref{e:intro-change} implies that $u$ belongs to the RHS of \eqref{e:main-ss} for any dyadic grid $\mathcal{D}$. Therefore, Theorem \ref{thm:main} places $u = \sigma \circ \psi \in \B_2(\mathcal{D},v)$, and taking $\mathcal{D}=\mathcal{D}_j$ for $j=1,2$ yields $u \in \B_2(\mathbb{D})$. By \eqref{e:w} $\BP_{\mathbb D}$ is bounded on $L^2(\mathbb D,u)$, and applying \eqref{e:transform} yields the conclusion when $p=2$. The case of general $p \in (p_0,p_0')$ follows by extrapolation \cite{cmp2011}*{Theorems 3.22 and 3.31} since $\{ \psi(Q_I)\}$ is a Muckenhoupt basis by Proposition \ref{prop:muck}.

Theorem \ref{thm:weighted-est} can also be proved without extrapolation. A more general statement than \eqref{e:main-ss} holds in Theorem \ref{thm:main}, namely that if
	\begin{equation}\label{e:main-ss2} v=\abs{\psi'}^2 \in \B_{q_0^+}(\mathbb D), \ \mbox{and} \ u \in \left\{ \begin{array}{ll} \B_{\frac{p}{p_0}}(\mathcal D,v) \cap \RH_{\frac{p_0'}{p_0'-p}}(\mathcal D,v) & p_0 > 1, \\ \B_p(\mathcal D,v) & p_0=1, \end{array} \right.\end{equation} 
then $uv^{1-p/2} \in \B_p(\mathcal D)$, where $p_0$ is related to $q_0$ by $\frac{p_0'}{2}=q_0'$. Assuming this, Theorem \ref{thm:weighted-est} can be proved in the same way as $p=2$ above. To prove that \eqref{e:main-ss2} implies $uv^{1-p/2} \in \B_p(\mathcal D)$, all that is needed is to tediously check many exponents and use the following generalization of Proposition \ref{prop:reg}.

\begin{proposition}\label{prop:reg2}
Let $v = \abs{\psi'}^2$ for $\psi$ which is conformal on $\mathbb D$ and $\theta < \frac 12$. Let $q \ge 1$ such that $(1-\theta)q' \ge 1$ and $u^q$ satisfies \eqref{e:weakRH}. Then there exists $r>\left( (1-\theta) q' \right)'$ such that
	\[ \avg{ uv^\theta }_{Q_I} \lesssim [v]_{\B_{r}(\mathbb D)}^{\frac{1}{r}} \avg{ u }_{q,v,Q_I} \avg{ v^{-1} }_{\frac{1}{r-1},Q_I}^{-\theta}, \qquad \forall I \in \mathcal D.\]
\end{proposition}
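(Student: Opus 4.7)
The plan is to extend the argument of Proposition~\ref{prop:reg} by carrying the extra factor $v^\theta$ through the H\"older step. Exactly as in that proof, begin with Koebe's distortion theorem (Proposition~\ref{KoebeDistort}): since $v = |\psi'|^2$ is nearly constant on each top half $T_J$, so is $v^\theta$ for any real $\theta$, and the regularization satisfies $u_{\mathcal D,v} \sim u_{\mathcal D}$. Thus
\[
  \avg{uv^\theta}_{Q_I} \sim \avg{u_{\mathcal D,v}\, v^\theta}_{Q_I}.
\]

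Next, decompose $u_{\mathcal D,v}\, v^\theta = (u_{\mathcal D,v}^{r} v)^{1/r} \cdot v^{\theta - 1/r}$ and apply H\"older's inequality with conjugate exponents $r$ and $r/(r-1)$:
\[
  \avg{u_{\mathcal D,v}\, v^\theta}_{Q_I} \le \avg{u_{\mathcal D,v}^{r} v}_{Q_I}^{1/r}\; \avg{v^{(\theta r-1)/(r-1)}}_{Q_I}^{(r-1)/r}.
\]
The first factor is handled exactly as in Proposition~\ref{prop:reg}: the hypothesis that $u^q$ satisfies \eqref{e:weakRH}, together with Lemma~\ref{lemma:reg}(iv), gives the same for $(u^q)_{\mathcal D,v}$, and Propositions~\ref{prop:B2containment} and~\ref{prop:reverse Holder} produce $\tau > 1$ with $(u^q)_{\mathcal D,v} \in \RH_\tau(\mathcal D,v)$. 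With $r = q\tau$ and the pointwise bound $u_{\mathcal D,v}^q \le (u^q)_{\mathcal D,v}$ from Lemma~\ref{lemma:reg}(iii), one obtains
\[
  \avg{u_{\mathcal D,v}^{r} v}_{Q_I}^{1/r} = \avg{u_{\mathcal D,v}}_{r,v,Q_I}\, \avg{v}_{Q_I}^{1/r} \lesssim \avg{u}_{q,v,Q_I}\, \avg{v}_{Q_I}^{1/r}.
\]
Iterating the reverse-H\"older self-improvement allows one to enlarge $\tau$ so that $r = q\tau > ((1-\theta)q')'$.

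The main obstacle is the second factor. Setting $w = v^{-1/(r-1)}$ and $s = 1 - \theta r$, the task reduces to proving
\[
  \avg{w^{s}}_{Q_I} \lesssim \avg{w}_{Q_I}^{s}.
\]
Once this is in hand, combining with the $\avg{v}_{Q_I}^{1/r}$ from the first factor produces precisely
\[
  \avg{uv^\theta}_{Q_I} \lesssim \avg{u}_{q,v,Q_I}\, \avg{v}_{Q_I}^{1/r}\, \avg{w}_{Q_I}^{(1-\theta r)(r-1)/r} = [v]_{\B_r(\mathbb D)}^{1/r}\, \avg{u}_{q,v,Q_I}\, \avg{v^{-1}}_{1/(r-1),Q_I}^{-\theta}.
\]
For $0 \le s \le 1$, i.e.\ $0 \le \theta r \le 1$, the inequality is just Jensen with constant $1$. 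Outside that regime --- most notably when $\theta < 0$, so $s > 1$ --- it is a reverse-H\"older statement for $w$. This is obtained by observing that $w \in \B_{r'}(\mathbb D) \Leftrightarrow v \in \B_r(\mathbb D)$, which follows from $v \in \B_{q_0^+}(\mathbb D)$ after a suitable choice of $r$, and that $w$ is essentially its own regularization by Koebe (hence satisfies APR); Propositions~\ref{prop:B2containment} and~\ref{prop:reverse Holder} applied to $w$ then furnish a reverse H\"older inequality with some exponent $\tau' > 1$. The flexibility in the hypothesis $r > ((1-\theta) q')'$, combined with the ability to iterate self-improvement, provides the parameter latitude needed to arrange $s \le \tau'$, thereby securing the required bound and completing the proof.
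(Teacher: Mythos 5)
Your reduction of the second H\"older factor to the inequality $\avg{w^s}_{Q_I} \lesssim \avg{w}_{Q_I}^s$ with $w = v^{-1/(r-1)}$ and $s = 1-\theta r$ is correct, and your handling of the first factor and the range $0 \le s \le 1$ (Jensen) matches what the paper intends; the paper's one-line proof glosses over exactly this second-factor issue, so it is good that you noticed it needs an argument. However, there are two genuine gaps in how you resolve it.

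First, you omit the case $s<0$. For $\theta>0$ this can be unavoidable: the hypothesis $r > \bigl((1-\theta)q'\bigr)'$ forces $r > 1/\theta$ precisely when $(1-\theta)^2q' \le 1$, which is compatible with the stated assumption $(1-\theta)q' > 1$ (for instance $\theta = 0.4$, $q'=2$). When $s<0$ the desired inequality is a $\B$-type statement, not a reverse-H\"older statement, and Jensen alone does not give it. It does follow quickly: since $|s| = \theta r - 1 \le r-1$ (because $\theta<1$), Jensen gives $\avg{v^{|s|/(r-1)}}_{Q_I} \le \avg{v}_{Q_I}^{|s|/(r-1)}$, and then the $\B_r$ condition $\avg{v}_{Q_I} \le [v]_{\B_r}\avg{v^{-1/(r-1)}}_{Q_I}^{-(r-1)}$ raised to the power $|s|/(r-1)$ finishes it.

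Second, for $s>1$ your route via Propositions~\ref{prop:B2containment} and~\ref{prop:reverse Holder} does not clearly produce a large enough exponent. Proposition~\ref{prop:reverse Holder} yields $\tau' = \frac{2c_1[w]_{\B_\infty}}{2c_1[w]_{\B_\infty}-1}$, which is \emph{bounded above} (by $\frac{2c_1}{2c_1-1}$), whereas $s = 1+|\theta|r$ can be arbitrarily large; and the iteration of self-improvement does not escape this, because $\bigl[w^{\tau_0\cdots\tau_{k-1}}\bigr]_{\B_\infty}$ grows as you iterate, driving the successive gains $\tau_k$ toward $1$. The cleaner route, which is what makes the exponent bookkeeping in the application (after \eqref{e:main-ss2}) close, is to bypass the reverse-H\"older machinery for $w$ entirely and use the $\B$-structure of $v$ at a \emph{different} index: writing $w^s = v^{-s/(r-1)} = v^{-1/(\rho-1)}$ with $\rho = 1 + (r-1)/s$, the condition $v\in\B_\rho$ gives $\avg{w^s}_{Q_I} \le [v]_{\B_\rho}^{s/(r-1)}\avg{v}_{Q_I}^{-s/(r-1)}$, and Jensen in the form $\avg{v^{-1/(r-1)}}_{Q_I} \ge \avg{v}_{Q_I}^{-1/(r-1)}$ closes the bound. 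One then checks $\rho>q_0$ (it is, for the $(\theta_j,q_j,r_j)$ used in the application), so $[v]_{\B_\rho}$ is finite from $v\in\B_{q_0^+}(\mathbb D)$. So the conclusion does require using $v\in\B_\rho$ for an index $\rho<r$ — an input your argument never draws on.
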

Proposition \ref{prop:reg} is the special case of $\theta=0$. Its proof is the same, using the regularization $u_{\mathcal D,v^\theta}$ rather than $u_{\mathcal D}$. Now assuming \eqref{e:main-ss2}, apply Proposition \ref{prop:reg2} to $(u,q_1,\theta_1)$ and $(u^{-\frac{1}{p-1}},q_2,\theta_2)$ with 
\[ \textstyle q_1 = \frac{p_0'}{p_0'-p}, \quad q_2 = \frac{p-1}{\frac{p}{p_0}-1}, \quad \theta_1 = 1-\frac{p}{2}, \quad \theta_2 = \frac{-1}{p-1} \left(1- \frac{p}{2} \right). \]
The key computation is that $((1-\theta_j) q_j')'=q_0$ for $j=1,2$, and then a similar argument to \eqref{e:uup} shows $uv^{1-p/2} \in \B_p(\mathcal D)$.

\section{Converse to Theorem \ref{thm:weighted-est} on uniform domains}\label{sec:necessity}
In this section, we will prove the following result showing that on uniform domains, Theorem \ref{thm:weighted-est} is sharp. Define the reverse H\"older characteristic with respect to disks centered on the boundary by
	\[ [\sigma]_{\mathrm{RHD}_s(\Omega)} = \sup_D \avg{\sigma}_{s,D \cap \Omega} \left( \avg{ \sigma}_{D \cap \Omega}\right)^{-1}.\]
\begin{theorem}\label{thm:TFAE}
Let $\Omega$ be a bounded, simply connected uniform domain and $\psi$ a conformal map from $\mathbb D$ onto $\Omega$. Suppose $1 \le p_0 < 2$ and set $q_0 = \frac{p_0}{2-p_0}$. The following are equivalent.
\begin{itemize}
	\item[A.] $\abs{\psi'}^2 \in \B_{q_0^+}(\mathbb D)$.
	\item[B.] For $p_0 < p < p_0'$, if
	\[\sigma \in \left\{ \begin{array}{ll} \B_p(\Omega) & p_0=1, \\[2mm] \B_{\frac{p}{p_0}}(\Omega) \cap \RH_{\frac{p_0'}{p_0'-p}}(\Omega), & p_0 > 1, \end{array} \right.\]
	then $ \BP_{\Omega} : L^p(\Omega,\sigma) \to L^p(\Omega,\sigma)$.
	\item[C.] For $p_0 < p < p_0'$, if
	\[ \sigma \in \left\{ \begin{array}{ll} \D_p(\Omega) & p_0=1, \\[2mm] \D_{\frac{p}{p_0}}(\Omega) \cap \mathrm{RHD}_{\frac{p_0'}{p_0'-p}}(\Omega), & p_0 > 1, \end{array} \right.\]
	then $\BP_{\Omega} : L^p(\Omega,\sigma) \to L^p(\Omega,\sigma)$.
\end{itemize}
\end{theorem}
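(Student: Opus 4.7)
The plan is to close the cycle A $\Rightarrow$ B $\Rightarrow$ C $\Rightarrow$ A. The first implication is immediate: A $\Rightarrow$ B is exactly Theorem \ref{thm:weighted-est}. For B $\Leftrightarrow$ C, I would invoke that on a uniform domain the weight classes featured in the two statements coincide with comparable characteristics: Proposition \ref{prop:BpO-psi} already establishes $[\sigma]_{\B_p(\Omega)} \sim [\sigma]_{\D_p(\Omega)}$, and the same chain of reasoning---quasisymmetry of the quasiconformal extension $\Psi$ together with Lemma \ref{lem:ainfintytents}, but applied to $L^s$-averages in place of $L^1$-averages---yields the analogous comparison $[\sigma]_{\RH_s(\Omega)} \sim [\sigma]_{\mathrm{RHD}_s(\Omega)}$.

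The substantive direction is C $\Rightarrow$ A. My first move would be to test hypothesis C against the constant weight $\sigma \equiv 1$, which lies trivially in every $\D_r(\Omega)$ and every $\mathrm{RHD}_s(\Omega)$, so C delivers the unweighted bound $\BP_{\Omega} : L^p(\Omega) \to L^p(\Omega)$ for each $p \in (p_0, p_0')$. Via the change-of-variable identity \eqref{e:transform} with $\sigma \equiv 1$ and the necessity half of the B\'ekoll\`e--Bonami theorem from \cite{bekolle1982}, this is equivalent to the parametrized condition
\[ |\psi'|^{2-p} \in \B_p(\mathbb D) \qquad \text{for every } p \in (p_0, p_0'). \]

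The hard part will be to upgrade this family of fractional-power B\'ekoll\`e--Bonami conditions into the target condition $|\psi'|^2 \in \B_{q_0^+}(\mathbb D)$. Here the uniform domain hypothesis is essential: through the quasiconformal extension $\Psi$ of $\psi$ provided by Proposition \ref{p:uniform}, Lemma \ref{lem:ainfintytents} furnishes a reverse H\"older-type inequality $\avg{|\psi'|^{2s}}_{Q_I}^{1/s} \lesssim \avg{|\psi'|^2}_{Q_I}$ for some $s>1$ and every dyadic $I$. I expect to combine this $\A_\infty$-type self-improvement of $|\psi'|^2$ with the B\'ekoll\`e--Bonami conditions on its fractional powers---interpolating the exponents in the spirit of the dyadic regularization developed in Section \ref{s:main-proof}---to recover $|\psi'|^2 \in \B_q(\mathbb D)$ for each $q > q_0$. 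The main obstacle is precisely this final interpolation, since $\B_p(\mathbb D)$ weights do not in general satisfy a reverse H\"older inequality; the $\A_\infty$ control inherited from the quasiconformal Jacobian---unavailable for arbitrary simply connected domains---is exactly what unlocks the passage and what pins down uniform domains as the natural setting for the converse.
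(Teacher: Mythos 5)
Your handling of A $\Rightarrow$ B (via Theorem \ref{thm:weighted-est}) and B $\Leftrightarrow$ C (via Proposition \ref{prop:BpO-psi} and its $\RH_s$ analogue) is correct and agrees with the paper. The issue is the converse direction, where your proposal departs from the paper and has a genuine gap.

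Testing only the constant weight $\sigma\equiv 1$ yields, via \eqref{e:transform} and the necessity half of \eqref{e:w}, the family of conditions $|\psi'|^{2-p}\in\B_p(\mathbb D)$ for $p\in(p_0,p_0')$. This is strictly weaker information than $v=|\psi'|^2\in\B_{q_0^+}(\mathbb D)$, and the upgrade you sketch does not close. Writing $\beta=1-p/2\in(0,(2-p_0)/2)$, the condition $v^\beta\in\B_{2(1-\beta)}$ controls $\avg{v^\beta}_{Q_I}\avg{v^{-\gamma}}_{Q_I}^{1-2\beta}$ with $\gamma=\beta/(1-2\beta)$, while $v\in\B_q$ asks for control of $\avg{v}_{Q_I}\avg{v^{-\gamma}}_{Q_I}^{1/\gamma}$ with $\gamma=1/(q-1)$. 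The exponents on $v^{-1}$ match up as $q\to q_0^+$, but bridging $\avg{v^\beta}_{Q_I}^{1/\beta}$ to $\avg{v}_{Q_I}$ requires the reverse-Jensen inequality $v^\beta\in\RH_{1/\beta}$ — a reverse H\"older whose exponent depends on $p$. Lemma \ref{lem:ainfintytents} does not supply this: it gives only the one-sided $\A_\infty$ estimate $|E|/|Q_I|\lesssim(v(E)/v(Q_I))^\delta$, whose consequence is that $v$ is doubling, not that it (or its fractional powers) obeys a reverse H\"older of prescribed exponent. Even invoking Gehring's theorem for $|J\Psi|$ only furnishes one fixed $s>1$, not a scale of exponents matched to each $\beta$. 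So the ``interpolation'' you flag as the main obstacle is exactly where the argument breaks.

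The paper's proof of B $\Rightarrow$ A is structurally different. Rather than the constant weight, it applies hypothesis B at $p=2$ to the tailored family of nonconstant weights $w=(M_\psi(\chi_{\psi(Q_1)}f))^q\in\B_1(\Omega)\cap\RH_{q_0}(\Omega)$ furnished by Lemma \ref{lemma:B1O}. This forces $w\circ\psi\in\B_2(\mathbb D)$, hence weakly doubling, which feeds into a Jones-style bisection argument (Proposition \ref{prop:homM}, using Lemma \ref{lemma:neigh}) to produce the maximal-function inequality \eqref{e:necM}. Choosing $g=v^{-p/(p-1)}$ and using that $v$ is doubling — the one place uniformity is needed, exactly to get $M_v(\chi_{Q_I}g)\gtrsim g$ pointwise — then delivers $v\in\B_p(\mathbb D)$ for every $p>q_0$. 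The essential point your proposal misses is that the weighted hypothesis must be exploited with nonconstant test weights; the unweighted corollary alone is too little information.
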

B. and C. are equivalent by Proposition \ref{prop:BpO-psi} (the same argument there works for $\RH_s(\Omega)$ and $\mathrm{RHD}_s(\Omega)$). A. implies B. by Theorem \ref{thm:weighted-est} so it remains to show that B. implies A.
We will need the following two lemmata to establish this remaining implication. The first is a consequence of the Koebe Distortion Theorem (Proposition \ref{KoebeDistort}).
\begin{lemma}\label{lemma:neigh}
Let $I_1,I_2$ be neighboring intervals in $\mathbb T$ and $I_0 = I_1 \cup I_2$. Then
 	 \[ |\psi(Q_{I_0})| \lesssim |\psi(Q_{I_1})| + |\psi(Q_{I_2})|.\]
\end{lemma}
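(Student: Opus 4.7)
The plan is to decompose $Q_{I_0}$ into its top half $T_{I_0}$ together with the two smaller Carleson boxes over $I_1$ and $I_2$, and then bound the $\psi$-image of $T_{I_0}$ by that of, say, $Q_{I_1}$ using the Koebe distortion theorem. Throughout I take ``neighboring'' to mean that $\ell(I_1) \sim \ell(I_2)$, which is the setting in which the lemma will be applied.

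First I would check directly from the defining formula \eqref{e:QI} that, up to boundary sets of measure zero,
\[
Q_{I_0} = T_{I_0} \sqcup Q_{I_1} \sqcup Q_{I_2},
\]
since $T_{I_0}$ occupies the band $\ell(I_0)/2 \le 1-r \le \ell(I_0)$ over $I_0$, while $Q_{I_1} \cup Q_{I_2}$ fills in the remaining band $1-r \le \ell(I_0)/2$ over $I_0 = I_1 \cup I_2$. Consequently
\[
|\psi(Q_{I_0})| = |\psi(T_{I_0})| + |\psi(Q_{I_1})| + |\psi(Q_{I_2})|,
\]
so it suffices to prove that $|\psi(T_{I_0})| \lesssim |\psi(Q_{I_1})|$.

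For the key estimate I would invoke Proposition \ref{KoebeDistort}: the integrand $|\psi'|^2$ is pointwise comparable on $T_{I_0}$, so
\[
|\psi(T_{I_0})| = \int_{T_{I_0}} |\psi'|^2 \, dA \sim |\psi'(z_0)|^2 |T_{I_0}|
\]
for any fixed $z_0 \in T_{I_0}$, and similarly $|\psi(T_{I_1})| \sim |\psi'(z_1)|^2 |T_{I_1}|$ for any $z_1 \in T_{I_1}$. Since $T_{I_0}$ and $T_{I_1}$ are adjacent regions of uniformly bounded hyperbolic diameter sharing the circular arc $1-r = \ell(I_1)$, any pair $(z_0,z_1) \in T_{I_0} \times T_{I_1}$ has hyperbolic distance bounded by an absolute constant. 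The classical Koebe bound $|\psi'(z_1)|/|\psi'(z_0)| \le e^{6\lambda(z_0,z_1)}$ from \cite{pommerenke-book}*{Cor 1.1.5}, used as in the proof of Proposition \ref{KoebeDistort}, then yields $|\psi'(z_0)| \sim |\psi'(z_1)|$. Combined with $|T_{I_0}| \sim \ell(I_0)^2 \sim \ell(I_1)^2 \sim |T_{I_1}|$ this gives
\[
|\psi(T_{I_0})| \sim |\psi(T_{I_1})| \le |\psi(Q_{I_1})|,
\]
which is the desired bound.

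There is no serious obstacle; the lemma is essentially a book-keeping consequence of the bounded hyperbolic geometry of the tops of Carleson boxes. The only point that requires a moment of care is justifying that $T_{I_0} \cup T_{I_1}$ sits inside a single Koebe unit, which is immediate once one records that their union has uniformly bounded hyperbolic diameter.
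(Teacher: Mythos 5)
Your proof is correct for the special case you treat, but the reduction to $\ell(I_1)\sim\ell(I_2)$ is unjustified and, more importantly, wrong: the lemma is invoked precisely in situations where the two neighbours have comparable $\psi$-image area but not comparable length. In the proof of Proposition~\ref{prop:homM}, the interval $I$ is split as $I_1\cup I_2$ so that $|\psi(Q_{I_1})|=|\psi(Q_{I_2})|$; if $\psi'$ degenerates near one endpoint, this forces $\ell(I_1)$ and $\ell(I_2)$ to be wildly different. The same happens in the Vitali covering lemma (Lemma~\ref{lemma:vitali}), where the neighbours $J_k^1,J_k^2$ are chosen so that $|\psi(Q_{J_k^j})|=|\psi(Q_{I_k})|$, and again in the proof of Lemma~\ref{lemma:B1O}. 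So the lemma must be proved for arbitrary neighbouring intervals.

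Once $\ell(I_1)\ne\ell(I_2)$ your decomposition $Q_{I_0}=T_{I_0}\sqcup Q_{I_1}\sqcup Q_{I_2}$ also fails: the band $1-r\le\ell(I_0)/2$ over the shorter interval, say $I_2$, is not contained in $Q_{I_2}$, leaving a residual vertical slab (the region $B$ in the paper's proof, spanning depths $\ell(I_2)\le 1-r\le\ell(I_1)$ over $I_2$). When $\ell(I_1)/\ell(I_2)$ is large, $B$ has unbounded hyperbolic diameter and cannot be compared to a single $T_I$ via Proposition~\ref{KoebeDistort} in one step. The paper handles it by covering $B$ by the disjoint tops $T_{J_k}$ of a geometric sequence of intervals $J_k$ sharing the left endpoint of $I_2$ with $\ell(J_k)=2^k\ell(I_2)$, using Koebe distortion on each $T_{J_k}$ to compare $|\psi(T_{J_k})|$ with the portion of $\psi(Q_{I_1}\cup T)$ it overlaps, and then summing the telescoping pieces. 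Your argument captures the part of the paper's proof that treats the top region $T$, but the chaining needed for $B$ is the essential content in the general case and is missing.
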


The second concerns the maximal function
	\[ M_{\psi} f(z) = \sup_{I \subset \mathbb T} \chi_{\psi(Q_I)} \avg{f}_{\psi(Q_I)}, \qquad f \in L^1(\Omega).\]
\begin{lemma}\label{lemma:B1O}
Let $f \in L^1(\Omega)$ and $q<1$. Then,
	\[ (M_\psi f)^q \in \B_1(\Omega).\]
\end{lemma}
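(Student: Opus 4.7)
The plan is to adapt the classical Coifman--Rochberg theorem (asserting that $(Mf)^q$ is an $A_1$ weight for $0<q<1$) to the Muckenhoupt basis $\{\psi(Q_I)\}$ on $\Omega$. The two ingredients are Kolmogorov's inequality and the dyadic structure of Carleson boxes.

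First I would localize to dyadic boxes. Define $M_\psi^{\mathcal D} f$ by restricting the supremum in \eqref{e:Mpsi} to $I \in \mathcal D$. By the approximation property of $\mathcal D_1, \mathcal D_2$ combined with the doubling of $v = |\psi'|^2$ from Proposition \ref{prop:muck}~(ii), one has $M_\psi f \lesssim M_\psi^{\mathcal D_1} f + M_\psi^{\mathcal D_2} f$ pointwise. Since $0 < q < 1$, the weight $(M_\psi f)^q$ is then pointwise comparable to $(M_\psi^{\mathcal D_1} f)^q + (M_\psi^{\mathcal D_2} f)^q$. A short calculation shows that a finite sum of $\B_1(\Omega)$ weights is again in $\B_1(\Omega)$, so it would suffice to prove $(M_\psi^{\mathcal D} f)^q \in \B_1(\Omega)$ for each dyadic grid $\mathcal D$.

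Next I would establish the weak-type $(1,1)$ of $M_\psi^{\mathcal D}$. Via \eqref{e:intro-change} one has the identity $M_\psi^{\mathcal D} f(w) = M_v^{\mathcal D}(f \circ \psi)(\psi^{-1}(w))$, and the standard stopping-time argument provides weak-type $(1,1)$ for the dyadic maximal function on $(\mathbb D, v\,dA)$. Changing variables back transfers this to $M_\psi^{\mathcal D} : L^1(\Omega) \to L^{1,\infty}(\Omega)$.

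Now fix $I \in \mathcal D$ with dyadic parent $\hat I$ and split $f = f_1 + f_2$ where $f_1 = f\chi_{\psi(Q_{\hat I})}$. The key combinatorial observation is that for $J \in \mathcal D$ with $\psi(Q_J) \ni w$ for some $w \in \psi(Q_I)$, the nesting of dyadic Carleson boxes (which transfers through the bijection $\psi$) forces either $Q_J \subseteq Q_{\hat I}$ (on which $f_2$ vanishes) or $Q_J \supseteq Q_{\hat I} \supseteq Q_I$; in the latter case $\avg{f}_{\psi(Q_J)} \le M_\psi^{\mathcal D} f(w')$ for every $w' \in \psi(Q_I)$. Thus $M_\psi^{\mathcal D} f_2 \le \inf_{\psi(Q_I)} M_\psi^{\mathcal D} f$ on $\psi(Q_I)$. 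For $f_1$, Kolmogorov's inequality together with the doubling of $v$ and the bound $\avg{f}_{\psi(Q_{\hat I})} \le M_\psi^{\mathcal D} f(w')$ for $w' \in \psi(Q_I)$ would give $\avg{(M_\psi^{\mathcal D} f_1)^q}_{\psi(Q_I)} \lesssim \inf_{\psi(Q_I)} (M_\psi^{\mathcal D} f)^q$. Assembling via $(a+b)^q \le a^q + b^q$ yields the $\B_1(\Omega)$ inequality restricted to dyadic $I$, completing the proof.

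The main obstacle is really just the careful tracking of the dyadic nesting and the weak-type $(1,1)$ bound for $M_\psi$; the latter is routine via the change-of-variables identity above, and the former follows from the fact that for distinct dyadic $I,J$ one has $Q_I \cap Q_J = \emptyset$ unless one is contained in the other.
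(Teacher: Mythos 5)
Your proof has the right overall skeleton (split $f$ into a ``local'' piece and an ``away'' piece, control the local piece via Kolmogorov's inequality and the weak-type $(1,1)$ bound, control the away piece by a containment argument), and the change-of-variables derivation of the weak-type $(1,1)$ estimate for $M_\psi^{\mathcal D}$ is fine. But there is a genuine gap: your argument requires $v = |\psi'|^2$ to be a \emph{doubling} weight, and that is not a hypothesis of the lemma. You invoke doubling in at least three places: to obtain $M_\psi f \lesssim M_\psi^{\mathcal D_1} f + M_\psi^{\mathcal D_2} f$ pointwise, to pass from dyadic Carleson boxes back to the full $\B_1(\Omega)$ condition (which is a supremum over \emph{all} intervals, not just dyadic ones), and, most critically, inside the Kolmogorov step where you need $|\psi(Q_{\hat I})| \lesssim |\psi(Q_I)|$ because $\ell(\hat I) = 2\ell(I)$. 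None of these hold without doubling, and for an arbitrary bounded simply connected $\Omega$, the weight $|\psi'|^2$ need not be doubling. You cite Proposition \ref{prop:muck}~(ii), but that proposition carries the extra hypothesis $|\psi'|^2 \in \cup_q \B_q(\mathbb D)$, which the lemma does not assume. Indeed, Remark \ref{rem:nec} makes clear that the paper deliberately reserves the doubling assumption for a later step, so the lemma is meant to hold without it.

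The paper sidesteps this by not working with dyadic parents at all. Instead, it chooses neighbors $I_1, I_2$ of $I$ whose \emph{lengths are adapted to $\psi$}, namely so that $|\psi(Q_{I_1})| = |\psi(Q_{I_2})| = |\psi(Q_I)|$ (so $\ell(I_j)$ can be wildly different from $\ell(I)$ if $v$ fails to double). Lemma \ref{lemma:neigh}, which is a one-sided subadditivity consequence of the Koebe distortion theorem and is available for every conformal map, then gives $|\psi(Q_J)| \lesssim |\psi(Q_I)|$ for $J = I_1 \cup I \cup I_2$. This replaces the doubling of $v$ in both the Kolmogorov estimate (with $f_1 = f\chi_{\psi(Q_J)}$) and the containment argument for $f_2$ (any $Q_K$ touching $Q_I$ and intersecting the complement of $Q_J$ must contain $I_1$ or $I_2$, and Lemma \ref{lemma:neigh} applied to $K\setminus I$ and $I$ gives the needed measure comparison). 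Working with all intervals directly also removes the need to transfer between dyadic and non-dyadic $\B_1$ conditions. If you add the hypothesis that $|\psi'|^2$ is doubling (which does hold in the uniform-domain setting of Theorem \ref{thm:TFAE}), your argument closes; but as a proof of Lemma \ref{lemma:B1O} as stated, it is incomplete without the balanced-neighbor device or something equivalent.
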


The proofs of Lemmata \ref{lemma:B1O} and \ref{lemma:neigh} are postponed to Section \ref{ss:B1O}. For now, let us prove following proposition, which will quickly show that B. implies A. It is loosely based on the necessity argument from \cite{jones-hom} concerning the homeomorphisms preserving $\operatorname{BMO}(\mathbb R)$.

\begin{proposition}\label{prop:homM}
Suppose $1\le s<\infty$ has the property that every $w \in \B_1(\Omega) \cap \RH_s(\Omega)$ has the additional property that $w \circ \psi$ is weakly doubling. Then for $v=\abs{\psi'}^2$, any $g \in L^1(\mathbb D,v)$, and any $q<\frac 1s$,
	\begin{equation}\label{e:necM} \avg{ M_v(\chi_{Q_I}g)}_{q,Q_I} \lesssim \avg{g}_{v,Q_I}.\end{equation}
\end{proposition}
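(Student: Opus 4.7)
The plan has four steps: a change of variables, producing weakly doubling of $u := (M_v(\chi_{Q_I}g))^q$ from the hypothesis, a Kolmogorov bound in the $v$-weighted sense, and a final transfer to the Lebesgue average using weakly doubling together with Koebe distortion.

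First I would set $f := g\circ\psi^{-1}$, $\tilde g := \chi_{Q_I}g$, and $\tilde f := \chi_{\psi(Q_I)}f$, and record that conformality of $\psi$ gives $M_v\tilde g = (M_\psi\tilde f)\circ\psi$ pointwise, while a change of variables yields $\langle \tilde f\rangle_{\psi(Q_I)} = \langle g\rangle_{v,Q_I}$. Next, exploiting $qs<1$, I would apply Lemma~\ref{lemma:B1O} to get $(M_\psi\tilde f)^{qs}\in \B_1(\Omega)$ and then, by taking $s$-th roots and invoking H\"older's inequality, conclude that $w := (M_\psi\tilde f)^q\in \B_1(\Omega)\cap\RH_s(\Omega)$. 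The hypothesis on $s$ would then be invoked exactly once, to conclude that $u = w\circ\psi$ is weakly doubling on $\mathbb D$.

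For the third step I would apply a Kolmogorov-type inequality to $M_v$: since $v\,dA$ is doubling (as $v\in\B_{q_0^+}$), the Carleson boxes form a Muckenhoupt basis for the measure $v\,dA$ and $M_v$ is of weak-type $(1,1)$ on $L^1(\mathbb D,v)$; Kolmogorov then yields $\langle u\rangle_{v,Q_I}^{1/q}\lesssim\langle g\rangle_{v,Q_I}$, i.e., $\langle u\rangle_{v,Q_I}\lesssim\langle g\rangle_{v,Q_I}^q$.

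The final and most substantial step is to transfer this $v$-weighted average of $u$ over $Q_I$ to the unweighted Lebesgue average. I would decompose $Q_I = \bigsqcup_{J\subseteq I,\,J\in\mathcal D}T_J$; by the Koebe distortion theorem (Proposition~\ref{KoebeDistort}), $v$ is comparable to a constant $v_J$ on each $T_J$, so $|T_J|\sim v_J^{-1}|\psi(T_J)|$ and $\int_{T_J}u\,dA \sim v_J^{-1}\int_{\psi(T_J)}w\,dA$. The hard part will be assembling these Koebe-controlled pieces using the weakly doubling of $u$: weakly doubling prevents $u$-mass from concentrating on deep dyadic descendants in a way incompatible with the Koebe-distorted decomposition, and a careful dyadic-tree summation should yield $\langle u\rangle_{Q_I}\lesssim\langle u\rangle_{v,Q_I}$, which chained with the Kolmogorov step gives \eqref{e:necM}. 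This transfer is the main obstacle: weakly doubling of $u$ on $\mathbb D$ must interact with the conformal distortion of $\psi$ in a nontrivial way to absorb the (potentially unbounded) variation of $v=|\psi'|^2$ over $Q_I$.
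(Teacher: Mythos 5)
Your first two steps line up with the paper: after changing variables, Lemma~\ref{lemma:B1O} applied with exponent $qs<1$ places the $s$-th power of $w=(M_\psi \tilde f)^q$ in $\B_1(\Omega)$, hence $w\in\B_1(\Omega)\cap\RH_s(\Omega)$ and $u=w\circ\psi$ is weakly doubling. Step~3 (Kolmogorov for $M_v$ on $L^1(\mathbb D,v)$) is also fine in substance, but the weak-type $(1,1)$ bound for $M_v$ with respect to $v\,dA$ should be deduced by change of variables from the weak-type $(1,1)$ of $M_\psi$ on $L^1(\Omega)$ (Lemma~\ref{lemma:vitali}); it cannot be justified by ``$v\in\B_{q_0^+}$'' since that is not a hypothesis of Proposition~\ref{prop:homM}.

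The genuine gap is step~4, and it is not merely technical. Changing variables, the inequality $\avg{u}_{Q_I}\lesssim\avg{u}_{v,Q_I}$ is equivalent to $\avg{w}_{\abs{(\psi^{-1})'}^2,\psi(Q_I)}\lesssim\avg{w}_{\psi(Q_I)}$, a comparison of a weighted and an unweighted average of $w$ over the \emph{same} region. Weak doubling of $u$ gives no handle on the correlation of $u$ with $v$, which is what that comparison requires. Worse, $w\in\B_1(\Omega)$ trivially yields the \emph{reverse}: $\avg{w}_{\psi(Q_I)}\le[w]_{\B_1(\Omega)}\inf_{\psi(Q_I)}w\le[w]_{\B_1(\Omega)}\avg{w}_{\abs{(\psi^{-1})'}^2,\psi(Q_I)}$. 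So the direction you need is exactly the one the available hypotheses fail to address, and I see no dyadic-tree argument producing it from weak doubling alone.

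The paper's proof avoids this two-measure comparison entirely. It partitions $I=I_1\cup I_2$ so that $|\psi(Q_{I_1})|=|\psi(Q_{I_2})|$, uses sublinearity and $q<1$ to select a half $Q_1\subset Q_I$ with $\int_{Q_I}\big(M_v(\chi_{Q_1}g)\big)^q\gtrsim\int_{Q_I}\big(M_v(\chi_{Q_I}g)\big)^q$, and then proves a \emph{pointwise} bound $u\lesssim\avg{g}_{v,Q_I}^q$ on a same-length neighbor $Q_J$ of $I$ sitting on the $I_2$-side: any Carleson box touching both $Q_J$ and $Q_1$ must contain $Q_{I_2}$ and hence carry $v$-mass $\gtrsim v(Q_I)$. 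Weak doubling is used exactly once, to pass from $\avg{u}_{Q_J}$, where the pointwise bound holds, to $\avg{u}_{Q_{J\cup I}}\gtrsim\avg{u}_{Q_I}$. No Kolmogorov inequality for $M_v$ and no transfer between $v$-weighted and Lebesgue averages over $Q_I$ is needed. To rescue your plan you would have to establish the step~4 comparison, and as it stands that is an unresolved claim rather than a proof.
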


\begin{proof}
Let $sq<1$, and $g \in L^1(\mathbb D,v)$. Set $\phi=\psi^{-1}$ and $f= g \circ \phi$. Fix an interval $I$.
Partition $I = I_1 \cup I_2$ where $|\psi(Q_{I_1})| = |\psi(Q_{I_2})|$. By Lemma \ref{lemma:neigh} these quantities are comparable to $|\psi(Q_I)|$. Let $e^{i\theta_j}$ be the center of $I_j$, and
partition $Q_I = Q_1 \cup Q_2$ where 
	\[ Q_j =\left\{ re^{i\theta} \in Q_I : |\theta-\theta_j| \le \frac{\ell(I_j)}{2} \right\}.\]
The sublinearity of the maximal function and the fact that $q < 1$ implies that for some $j \in \{1,2\}$,
	\begin{equation}\label{e:half} 2\int_{\psi(Q_I)} M_\psi( \chi_{\psi(Q_j)}f)^{q} \abs{\phi'}^2 \ge \int_{\psi(Q_I)} M_\psi( \chi_{\psi(Q_I)}f)^{q} \abs{\phi'}^2. \end{equation}
Without loss of generality, assume \eqref{e:half} holds for $j=1$ and set $w=M_\psi( \chi_{\psi(Q_1)}f)^{q}$. Then, by Lemma \ref{lemma:B1O}, $w^s \in \B_1(\Omega)$ so $w \in \B_1(\Omega) \cap \RH_s(\Omega)$. Take $J$ to be a neighbor of $I_2$ and $I$ with $\ell(J)=\ell(I)$, and we claim that
	\begin{equation}\label{e:claim-w} w \lesssim \left( \avg{f}_{\psi(Q_I)} \right)^{q} \quad \mbox{ on } \psi(Q_{J}).\end{equation}
Indeed, for $z \in \psi(Q_J)$ and any $K$ such that $z \in \psi(Q_K)$ and $K$ has nonempty intersection with $I_1$, $I_2$ must be contained in $K$. Therefore, $|\psi(Q_K)| \ge |\psi(Q_{I_2})| \gtrsim |\psi(Q_I)|$, which establishes \eqref{e:claim-w}.

Now we pull back with $\psi$. Set $u = w \circ \psi$  and notice that
	\[ \avg{ u }_{Q_{J \cup I}} \gtrsim \frac{1}{ |Q_I|} \int_{Q_I} w \circ \psi \gtrsim \frac{1}{|Q_I|} \int_{\psi(Q_I)} M_\psi( \chi_{\psi(Q_I)}f)^{q} \abs{\phi'}^2,\]
where the last inequality follows by changing variables and \eqref{e:half}. On the other hand, since we assume that $u$ is weakly doubling, by \eqref{e:claim-w},
	\[  \avg{ u }_{Q_{J \cup I}} \lesssim  \avg{ u }_{Q_{J}} \lesssim \left( \avg{f}_{\psi(Q_I)} \right)^{q}.\]
Combining these two displays, we obtain 
	\[  \left( \frac{1}{|Q_I|} \int_{\psi(Q_I)} M_\psi( \chi_{\psi(Q_I)}f)^{q} \abs{\phi'}^2 \right)^{\frac{1}{q}} \lesssim \avg{f}_{\psi(Q_I)}.\]
Changing variables, we obtain \eqref{e:necM}.
\end{proof}

\begin{proof}[Proof of B. implies A. in Theorem \ref{thm:TFAE}]
Let $1 \le p_0 <2$, and let 
	\[  w \in \left\{ \begin{array}{ll} \B_1(\Omega) & p_0=1; \\[2mm] \B_{1}(\Omega) \cap \RH_{q_0}(\Omega), & p_0 > 1. \end{array} \right.\] 
Then, since $\B_1(\Omega) \subset \B_{\frac{2}{p_0}}(\Omega)$, part B. implies that $\BP_{\Omega}$ is bounded on $L^2(\Omega, w)$, or equivalently by \eqref{e:transform} and \eqref{e:w}, $w \circ \psi \in \B_2(\mathbb{D}).$ Since any weight in $\B_2(\mathbb{D})$ is weakly doubling, we have inequality \eqref{e:necM} for any $q< \frac{1}{q_0}$. Now, let $p>q_0$ and select $g=v^{-\frac{p}{p-1}}$ so that on the one hand
	\[ \avg{g}_{v,Q_I} = \frac{ \int_{Q_I} v^{-\frac{p}{p-1}+1} \, dA}{\int_{Q_I} v \, dA } =  \frac{ \int_{Q_I} v^{-\frac{1}{p-1}} \, dA}{\int_{Q_I} v \, dA }.\]
On the other hand, we claim to have the pointwise domination $$M_{v}(\chi_{Q_I} g)(\zeta) \gtrsim g(\zeta) , \qquad \zeta \in Q_I.$$ To see this, fix $\zeta \in Q_I$ and find $J \subseteq I$ so that $\zeta \in T_{J}.$ Then, using the doubling property of $v$ from Lemma \ref{lem:ainfintytents} together with Proposition \ref{KoebeDistort}, we have 

\begin{align*}
g(\zeta)  \sim \avg{g}_{v,T_{J}}
 \lesssim \langle g \rangle_{v, Q_{J}}
 \lesssim M_v(\chi_{Q_I} g)(\zeta).
\end{align*}
Therefore, applying \eqref{e:necM} with $q= \frac 1p < \frac 1{q_0}$, we obtain
\[  \frac{ \int_{Q_I} v^{-\frac{1}{p-1}} \, dA}{\int_{Q_I} v \, dA } \gtrsim \langle M_v(\chi_{Q_I} g) \rangle_{q, Q_I} \gtrsim \left( \frac{1}{|Q_I|} \int_{Q_I} v^{-\frac{1}{p-1}} \, dA \right)^{p}.\]
Simple algebra and taking a supremum over all intervals $I$ then implies $v \in \B_p(\mathbb{D})$, and since $p>q_0$ was arbitrary we conclude $v \in \B_{q_0^{+}}(\mathbb{D})$.
\end{proof}

\begin{remark}\label{rem:nec} Notice that the only place we used that $\Omega$ was a uniform domain was in the Proof of B. implies A. to bound $g$ pointwise by the maxmal function, which relied on $v$ being doubling. So even in the case of non-uniform domain, we obtain the necessary condition
	\[ \avg{ M_v(\chi_{Q_I}v^{-\frac{p}{p-1}}) }_{\frac 1p,Q_I} \avg{ v}_{Q_I} \lesssim \avg{ v^{-\frac{1}{p-1}}}_{Q_I}, \quad p > q_0,\]
which reduces to the $\B_{q_0^+}(\mathbb D)$ condition if $M_v( \chi_{Q_I}v^{-\frac{p}{p-1}}) \gtrsim v^{-\frac{p}{p-1}}$, which holds in particular if $v$ is doubling.
\end{remark}

\subsection{Proofs of Lemmata \ref{lemma:neigh} and \ref{lemma:B1O}}\label{ss:B1O}
\begin{proof}[Proof of Lemma \ref{lemma:neigh}]
We may assume $\ell(I_1) \ge \ell(I_2)$ and $I_1$ is the right neighbor of $I_2$. Let $\theta_j$ be the center of $I_j$. Then $Q_{I_0}$ can be partitioned into 4 regions: $Q_{I_1}$, $Q_{I_2}$,
	\[ B = \left\{ re^{i\theta} : 1-\ell(I_1) \le r < 1-\ell(I_2), |\theta - \theta_2| \le \frac{\ell(I_2)}{2} \right\},\]
and
	\[ T = \left\{ re^{i\theta} : 1-\ell(I_0) \le r < 1-\ell(I_1), |\theta - \theta_0| \le \frac{\ell(I_0)}{2} \right\}.\]
\begin{figure}\label{f:neigh}
\begin{tikzpicture}[scale=0.75]
\fill[gray,fill opacity=0.3] (0,0) rectangle (5,5);
\draw[gray,line width=2pt] (0,0) rectangle (5,5);
\draw (2.5,1.5) node {$Q_{I_1}$};
\fill[purple,fill opacity=0.3] (0,0) rectangle (-1,1);
\draw[purple,line width=2pt] (0,0) rectangle (-1,1);
\draw[purple] (0,0) rectangle (-1,1);
\draw (-0.5,0.5) node {$Q_{I_2}$};
\fill[blue,fill opacity=0.3] (0,1) rectangle (-1,5);
\draw[blue,line width=2pt] (0,1) rectangle (-1,5);
\draw (-0.5,2.25) node {$B$};
\fill[red,fill opacity=0.3] (-1,5) rectangle (5,6);
\draw[red,line width=2pt] (-1,5) rectangle (5,6);
\draw (2,5.5) node {$T$};
\fill[green,fill opacity=0.3] (-0.5,2.75) rectangle (5,5.5);
\draw[green,line width=2pt] (-0.5,2.75) rectangle (5,5.5);
\draw (2,4) node {$T_J$};
\end{tikzpicture}
\hspace{5ex}
\begin{tikzpicture}[scale=0.75]
\fill[gray,fill opacity=0.3] (0,0) rectangle (5,5);
\draw[gray,line width=2pt] (0,0) rectangle (5,5);
\draw (2.5,1.5) node {$Q_{I_1}$};
\fill[purple,fill opacity=0.3] (0,0) rectangle (-1,1);
\draw[purple,line width=2pt] (0,0) rectangle (-1,1);
\draw[purple] (0,0) rectangle (-1,1);
\draw (-0.5,0.5) node {$Q_{I_2}$};
\fill[blue,fill opacity=0.3] (0,1) rectangle (-1,5);
\draw[blue,line width=2pt] (0,1) rectangle (-1,5);
\draw (-0.5,3) node {$B$};
\fill[red,fill opacity=0.3] (-1,5) rectangle (5,6);
\draw[red,line width=2pt] (-1,5) rectangle (5,6);
\draw (2,5.5) node {$T$};
\fill[green,fill opacity=0.3] (-1,1) rectangle (1,2);
\draw[green,line width=2pt] (-1,1) rectangle (1,2);
\draw (0.5,1.5) node {$T_{J_1}$};
\fill[green,fill opacity=0.3] (-1,2) rectangle (3,4);
\draw[green,line width=2pt] (-1,2) rectangle (3,4);
\draw (1,3) node {$T_{J_2}$};
\fill[green,fill opacity=0.3] (-1,4) rectangle (7,8);
\draw[green,line width=2pt] (-1,4) rectangle (7,8);
\draw (3,7) node {$T_{J_3}$};
\end{tikzpicture}
\caption{$Q_{I_0} = Q_{I_1} \cup Q_{I_2} \cup B \cup T$ and $B \subset \cup_{k=1}^K T_{J_k}$.}
\end{figure}
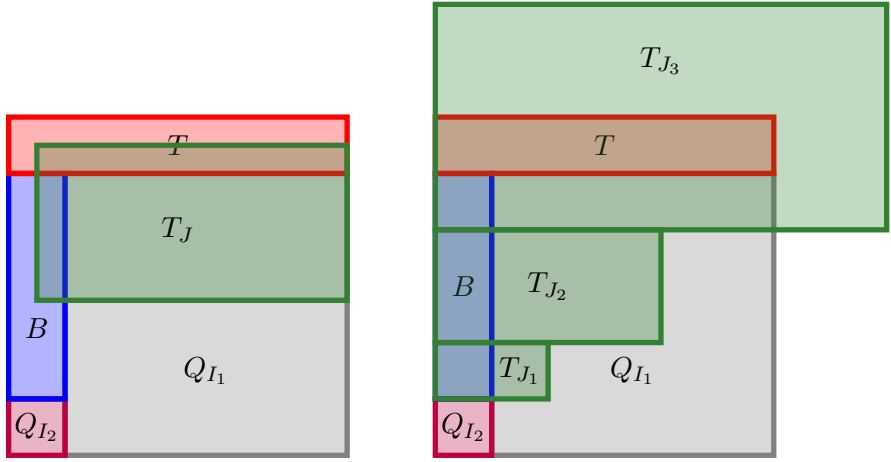
First, let $J$ be an interval with the same right endpoint as $I_1$ but with $\ell(J) = \frac{\ell(I_1) + \ell(I_0)}{2}$. There exists $z_1 \in T \cap T_J$ and $z_2 \in T_{I_1} \cap T_{J}$. Furthermore, since $T \subset T_{I_0}$, by Proposition \ref{KoebeDistort}, for any $z_3 \in T$ and $z_4 \in T_{I_1}$, 
	\[ \abs{\psi'(z_3)} \sim \abs{\psi'(z_1)} \sim \abs{\psi'(z_2)} \sim \abs{\psi'(z_4)}.\]
Since $|T| \lesssim |T_{I_1}|$ the above display implies $\abs{\psi(T)} \lesssim \abs{\psi(T_{I_1})} \le \abs{\psi(Q_{I_1})}$. 

To handle $B$, for each $k \in \mathbb N$, let $J_k$ be the interval with the same left endpoint as $I_2$, but with $\ell(J_k) = 2^k \ell(I_2)$. Then, $B \subset \cup_{k=1}^K T_{J_k}$ where $K = \lceil \log_2 \frac {\ell(I_1)}{\ell(I_2)} \rceil$. $\{T_{J_k}\}_{k=1}^K$ is pairwise disjoint and
	\[ |T_{J_k}| \lesssim |(Q_{I_1} \cup T) \cap T_{J_k}|.\]
Therefore, applying again Proposition \ref{KoebeDistort} and summing over $k$,
	\[ |\psi(B)| \le \sum_{k=1}^K |\psi(T_{J_k})| \lesssim \sum_{k=1}^K |\psi( (Q_{I_1} \cup T) \cap T_{J_k})| \lesssim \abs{\psi(Q_I)}.\]
\end{proof}
To prove Lemma \ref{lemma:B1O}, we will use
the following Vitali covering lemma for $\psi(Q_I)$.
\begin{lemma}\label{lemma:vitali} Let $\mathcal I$ be a finite collection intervals in $\mathbb T$. 
Then, there exist disjoint intervals $\{I_k\}_{k=1}^K \subset \mathcal I$, 
such that
	\[ \left\vert \bigcup_{I \in \mathcal I} \psi(Q_I) \right\vert \lesssim \sum_{k=1}^K \left\vert  \psi(Q_{I_k}) \right \vert,\]
where the implicit constant is absolute.
\end{lemma}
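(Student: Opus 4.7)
The plan is to adapt the classical Vitali covering argument to the family $\{\psi(Q_I)\}_{I \in \mathcal I}$. First, I would greedily extract a disjoint subcollection $\{I_k\}_{k=1}^K \subseteq \mathcal I$ by iteratively selecting the longest interval from $\mathcal I$ that is disjoint from all previously chosen $I_1,\dots,I_k$; since $\mathcal I$ is finite, this terminates. By the maximality in the greedy step, every $I \in \mathcal I$ either belongs to $\{I_k\}$ or meets some $I_k$ with $\ell(I_k) \geq \ell(I)$. In the latter case, since $\ell(I) \le \ell(I_k)$ and $I \cap I_k \ne \emptyset$, one has $I \subseteq 3I_k$, where $3I_k$ denotes the interval concentric with $I_k$ of length $\min\{3\ell(I_k),1\}$ (with the convention that when $3\ell(I_k) \geq 1$ we replace $3I_k$ by $\mathbb T$ and $Q_{3I_k}$ by $\mathbb D$). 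Combined with $\ell(I) \le \ell(I_k)$, this containment lifts to Carleson boxes: $Q_I \subset Q_{3I_k}$, and hence
\[ \bigcup_{I \in \mathcal I} \psi(Q_I) \subseteq \bigcup_{k=1}^K \psi(Q_{3I_k}). \]

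The remaining task is to prove $|\psi(Q_{3I_k})| \lesssim |\psi(Q_{I_k})|$ for each $k$ with an absolute implicit constant. For this I invoke Lemma \ref{lem:ainfintytents} with outer interval $3I_k$ and with $E = Q_{I_k} \subseteq Q_{3I_k}$: since $|Q_{I_k}|/|Q_{3I_k}|$ is bounded below by an absolute constant (essentially $1/9$), the conclusion
\[ \frac{|Q_{I_k}|}{|Q_{3I_k}|} \lesssim \biggl(\frac{|\psi(Q_{I_k})|}{|\psi(Q_{3I_k})|}\biggr)^{\delta} \]
rearranges to the desired estimate $|\psi(Q_{3I_k})| \lesssim |\psi(Q_{I_k})|$. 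Summing over $k$ then yields
\[ \Bigl| \bigcup_{I \in \mathcal I} \psi(Q_I) \Bigr| \le \sum_{k=1}^K |\psi(Q_{3I_k})| \lesssim \sum_{k=1}^K |\psi(Q_{I_k})|, \]
as claimed.

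The main obstacle is the doubling-type inequality $|\psi(Q_{3I_k})| \lesssim |\psi(Q_{I_k})|$: without the uniformity of $\Omega$, the conformal derivative $|\psi'|^2$ could concentrate on the flanks $Q_{3I_k}\setminus Q_{I_k}$ and swamp its contribution on $Q_{I_k}$, which would prevent a purely geometric Vitali argument through the enlargement $3I_k$ from closing. The $\A_\infty$ property of $|J\Psi|$ (available because the extension $\Psi$ is quasiconformal, by Proposition \ref{p:uniform}), packaged in Lemma \ref{lem:ainfintytents}, is precisely the ingredient that defeats this obstacle and supplies the absolute constant in the statement.
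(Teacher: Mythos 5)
Your overall strategy (a greedy Vitali-type selection followed by a covering enlargement) is natural, but the way you close the gap from $3I_k$ to $I_k$ introduces a hypothesis the lemma does not carry. Lemma \ref{lem:ainfintytents} is only available when $\Omega$ is a \emph{uniform} domain: its proof rests on the quasiconformal extension $\Psi$ of $\psi$ to $\mathbb C$, which exists precisely for uniform domains by Proposition \ref{p:uniform}, and on Gehring's theorem that $|J\Psi|\in\A_\infty(\mathbb C)$. However, Lemma \ref{lemma:vitali} is stated (and used) without any uniformity hypothesis; it feeds into the weak-type $(1,1)$ bound for $M_\psi$, Lemma \ref{lemma:B1O}, and Proposition \ref{prop:homM}, none of which assume uniformity, and Remark \ref{rem:nec} explicitly highlights that uniformity is invoked only later. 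So your proof establishes the lemma under a strictly stronger hypothesis than intended, which is a genuine gap rather than an alternative route.

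The deeper difficulty you correctly identify — that $|\psi'|^2$ can concentrate on the flanks of $Q_{3I_k}$ — is real and cannot be defeated by a length-based greedy selection. The paper's proof sidesteps it by changing the selection criterion: it iteratively picks the interval maximizing $|\psi(Q_I)|$ rather than $\ell(I)$, and then enlarges $I_k$ not to the geometric triple $3I_k$ but to $J_k = J_k^1\cup I_k\cup J_k^2$, where the neighbors $J_k^j$ are calibrated so that $|\psi(Q_{J_k^j})|=|\psi(Q_{I_k})|$ (they may be much longer or shorter than $I_k$). The needed comparability $|\psi(Q_{J_k})|\lesssim|\psi(Q_{I_k})|$ then follows from Lemma \ref{lemma:neigh}, which relies only on the Koebe distortion theorem and holds for any conformal $\psi$ on $\mathbb D$. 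The greedy selection by $|\psi(Q_I)|$ guarantees that every $I\in\mathcal I$ is absorbed into some $J_k$, because if $I$ escaped $J_k$ it would strictly contain one of $J_k^1,J_k^2$ and so have $|\psi(Q_I)|>|\psi(Q_{I_k})|$, contradicting the maximal choice. If you want your argument to cover the general case, you would need to replace the length-ordered greedy step and the triple $3I_k$ with this mass-ordered selection and mass-calibrated enlargement.
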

\begin{proof}
For any non-empty collection of intervals $\mathcal J$, let $I(\mathcal J)$ to be an interval in $\mathcal J$ with $|\psi(Q_{I(\mathcal J)})| = \max_{I \in \mathcal J} \abs{\psi(Q_I)}$. 
Let $\mathcal I_0 = \mathcal I$ and inductively define for $k \ge 0$,
	\[ I_k = I(\mathcal I_k), \quad \mathcal I_{k+1} = \{ I \in \mathcal I_k : I \cap I_k = \varnothing \}.\]
Since $\mathcal I$ is a finite collection, this process terminates after say $K$ steps once $\mathcal I_{K+1}=\varnothing$. 
By construction $\{I_k\}_{k=0}^K$ are disjoint. Fix $k$ and let $J_k^1$ and $J_k^2$ be the left and right neighbors of $I_k$ such that 
	\[  |\psi(Q_{J_k^1})| = |\psi(Q_{I_k})| = |\psi(Q_{J_k^2})|.\]  
Setting $J_k = J_k^1\cup I_k \cup J_k^2$, Lemma \ref{lemma:neigh} implies $\abs{\psi(Q_{J_k})} \lesssim \abs{\psi(Q_{I_k})}$. The lemma will be proved if we can show that for each $I \in \mathcal I$, there exists $k$ such that $I \subset J_k$.

To this end, given $I \in\mathcal I$, let $k$ be the first index such that $I \not\in \mathcal I_{k+1}$. Then $I \in \mathcal I_k$ and $I \cap I_{k} \ne \varnothing$. Suppose toward a contradiction that $I \backslash J_k \ne \varnothing$. Then $I \supsetneq J_k^{j}$ for some $j \in \{1,2\}$, but since $I \in \mathcal I_k$, 
	\[  \abs{\psi(Q_{I})} \le \abs{\psi(Q_{I_k})} = \abs{\psi(Q_{J_k^j})} < \abs{\psi(Q_{I})},\]
which is a contradiction.
\end{proof}
 As a consequence,
\begin{lemma} $M_\psi$ is of weak type $(1,1)$. \end{lemma}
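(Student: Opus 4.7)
The plan is to run the classical Vitali-type argument for maximal functions, using Lemma \ref{lemma:vitali} as the covering lemma in place of the usual Euclidean Vitali lemma. Fix $f \in L^1(\Omega)$ and $\lambda > 0$, and set
\[ E_\lambda = \{ z \in \Omega : M_\psi f(z) > \lambda \}.\]
For each $z \in E_\lambda$, by the definition of $M_\psi$ there exists an interval $I_z \subset \mathbb T$ with $z \in \psi(Q_{I_z})$ and
\[ |\psi(Q_{I_z})| < \frac{1}{\lambda} \int_{\psi(Q_{I_z})} |f| \, dA.\]
Hence $E_\lambda \subset \bigcup_{z \in E_\lambda} \psi(Q_{I_z})$.

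Since we only need to prove a weak-type estimate, it suffices to bound the measure of an arbitrary compact subset $K \subset E_\lambda$. By compactness, $K$ is covered by finitely many of the sets $\psi(Q_{I_z})$; call the corresponding finite collection of intervals $\mathcal I$. Applying Lemma \ref{lemma:vitali} to $\mathcal I$ produces disjoint intervals $\{I_k\}_{k=1}^K \subset \mathcal I$ such that
\[ |K| \le \Big| \bigcup_{I \in \mathcal I} \psi(Q_I) \Big| \lesssim \sum_{k=1}^K |\psi(Q_{I_k})|.\]
Using the selection property of each $I_k$ and the disjointness of the $\psi(Q_{I_k})$ (which follows from disjointness of the $I_k$, since $\psi$ is injective on $\mathbb D$), we conclude
\[ |K| \lesssim \sum_{k=1}^K \frac{1}{\lambda} \int_{\psi(Q_{I_k})} |f| \, dA \le \frac{1}{\lambda} \|f\|_{L^1(\Omega)}.\]
Taking the supremum over compact $K \subset E_\lambda$ yields $|E_\lambda| \lesssim \lambda^{-1}\|f\|_{L^1(\Omega)}$, which is the weak-type $(1,1)$ estimate.

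The only genuine issue is the reduction to a finite subcover, which is standard via inner regularity of Lebesgue measure; the heart of the matter (controlling overlap of the selected $\psi(Q_I)$ in a geometry we do not directly understand) is exactly what Lemma \ref{lemma:vitali} is designed to handle.
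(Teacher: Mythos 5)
Your proof is correct and takes exactly the same route as the paper: cover the level set with sets $\psi(Q_I)$ witnessing $\avg{f}_{\psi(Q_I)}>\lambda$, reduce to a compact subset and a finite subcover, then apply Lemma \ref{lemma:vitali} and use disjointness of the selected $\psi(Q_{I_k})$ (which holds since disjoint intervals give disjoint Carleson boxes and $\psi$ is injective). The only point the paper leaves implicit and you make explicit is this disjointness, which is worth stating.
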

\begin{proof} Let $f \in L^1(\Omega)$, $\lambda>0$, and $E=\{z \in \Omega : M_{\psi}f(z) > \lambda\}$. For each $z \in E$, there exists $I(z)$ such that $z \in \psi(Q_{I(z)})$ and $\avg{f}_{\psi(Q_{I(z)})} > \lambda$. Therefore $\{\psi(Q_{I(z)})\}_{z \in E}$ covers $E$. Now let $F$ be an arbitrary compact subset of $E$. $F$ can be covered by a finite subcollection of $\{\psi(Q_{I(z)})\}_{z \in E}$, say $\{\psi(Q_{I})\}_{I \in \mathcal I}$. Applying Lemma \ref{lemma:vitali} above, there exists a pairwise disjoint subcollection $\{I_k\}_{k=1}^K \subset \mathcal I$ such that
	\[ |F| \lesssim \sum_{k=1}^K |\psi(Q_{I_k})| \le \frac{1}{\lambda} \sum_{k=1}^K \int_{\psi(Q_{I_k})} |f| \, dA \le \frac{1}{\lambda} \norm {f}_{L^1(\Omega)}.\]
We conclude by taking the supremum over all $F$ which are compact subsets of $E$.
\end{proof}

\begin{proof}[Proof of Lemma \ref{lemma:B1O}]
The $\B_1(\Omega)$ condition can equivalently be stated as follows. There exists $C>0$ such that for each $z \in \Omega$ and $I$ such that $z \in \psi(Q_I)$,
	\[ \avg{ M_\psi f }_{q,\psi(Q_I)} \le C^{\frac{1}{q}} M_\psi f(z).\]
Fix such $z$ and $I$. Let $I_1,I_2$ be the right and left neighbors of $I$ such that $|\psi(Q_{I_j})| = |\psi(Q_I)|$. Setting $J = {I_1} \cup I \cup {I_2}$,
Lemma \ref{lemma:neigh} implies that
	\begin{equation}\label{e:DJ} |\psi(Q_I)| \sim |\psi(Q_J)|. \end{equation}  
Split $f=f_1+f_2$ where $f_1 = f \chi_{\psi(Q_J)}$.
Using the distribution function and the weak type estimate, for any $T>0$,
	\[ \begin{aligned} \int_{\psi(Q_I)} (M_\psi f_1)^q \, dA & \le \int_0^T + \int_T^\infty q \lambda^{q-1} | \{ z \in \psi(Q_I) : M_{\psi} f(z) > \lambda \} | \, d\lambda \\
	&\le T^q |\psi(Q_I)| + \norm{ f_1}_{L^1(\Omega)} \frac{q}{1-q} T^{q-1}. \end{aligned} \]
Taking $T= \norm{f_1}_{L^1(\Omega)} |\psi(Q_I)|^{-1}$, we obtain
	\[ \avg{ M_\psi f_1 }_{q,\psi(Q_I)} \lesssim \left( \frac{ \norm{ f_1}_{L^1(\Omega)} }{|\psi(Q_I)|} \right)^q \lesssim \avg{ f }_{\psi(Q_J)} \le M_\psi f(z),\]
where the second inequality comes from \eqref{e:DJ} and the support of $f_1$. For $f_2$, let $w \in \psi(Q_I) \cap \psi(Q_K)$. If $K \subset J$, then $\avg {f}_{\psi(Q_K)}=0$. Otherwise, $K$ exits $J$, but since it also has nonempty intersection with $I$, either $I_1$ or $I_2$ is contained in $K$. WLOG assume it is $I_1$. Then, setting $L = K \cup I$ and $K_1 = K \backslash I$, Lemma \ref{lemma:neigh}, applied to $K_1$ and $I$, implies
	\[ |\psi(Q_L)| \lesssim |\psi(Q_{K_1})| + |\psi(Q_I)| =  |\psi(Q_{K_1})| + |\psi(Q_{I_1})| \le 2|\psi(Q_{K})|.\]
Therefore, $\avg{f_2}_{\psi(Q_K)} \lesssim \avg{f}_{\psi(Q_L)}$ which implies $M_\psi f_2(w) \lesssim M_\psi f(z)$.
\end{proof}

\section{Proof of Theorem \ref{thm:weak}}\label{sec:weak}
Let $\sigma \in \B_1(\Omega)$. Set $u = \sigma \circ \psi$, $v=|\psi'|^2$ and $w=|\psi'|$. The weights $u$, $v$, and $w$ are defined on $\mathbb D$ so we omit the explicit reference to $\mathbb D$ in the $\B_p$ characteristics that follow.
We also identify weights with the absolutely continuous measure they induce via the notation $u(E) = \int_E u \, dA$ for measurable sets $E$.
The following weak-type estimate for the weighted dyadic maximal function can be proven using a standard maximal covering argument that is independent of the underlying measure $w \, dA$, which we omit. 
\begin{proposition}
Suppose $u, v, w$ are weights on $\mathbb{D}$ such that $uw \in \B_1(w)$ and $v=w^2$. Then, for all $f \in L^1(\mathbb{D}, uv),$

\begin{equation}
uv \left( \left \{ M_{w}^{\mathcal D} f > \lambda \right \} \right) \leq \frac{[uw]_{\B_1(w)}}{\lambda} \int_{\mathbb{D}} |f| \, uv \, dA.
\label{MaximalWeak}
\end{equation}
\end{proposition}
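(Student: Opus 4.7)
The plan is to run the usual Calderón--Zygmund-style maximal decomposition argument adapted to the measure $w\, dA$, and then convert averages against $w$ to integrals against $uv$ by exploiting both the identity $uv = (uw)\,w$ and the $\B_1(w)$ condition on $uw$.

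First, because $M_w^{\mathcal D}$ is a dyadic maximal operator, the level set $E_\lambda := \{M_w^{\mathcal D} f > \lambda\}$ admits the standard decomposition into a pairwise disjoint family $\{Q_{I_j}\}$ of maximal dyadic Carleson boxes (with $I_j \in \mathcal D$) for which
\[
\avg{|f|}_{w,Q_{I_j}} = \frac{1}{\int_{Q_{I_j}} w \, dA}\int_{Q_{I_j}} |f|\,w\,dA > \lambda.
\]
This reduction is standard and relies only on properties (i)--(iv) of a dyadic grid listed in Subsection \ref{ss:dyad}; no properties of the weight $w$ are used beyond its positivity and local integrability.

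Second, I estimate $uv(Q_{I_j})$ box by box. Writing $uv = (uw)\cdot w$ and using the $\B_1(w)$ hypothesis, which unpacks to
\[
\avg{uw}_{w,Q_I} \;\le\; [uw]_{\B_1(w)} \cdot \essinf_{Q_I}(uw),
\]
I get
\[
uv(Q_{I_j}) = \avg{uw}_{w,Q_{I_j}} \int_{Q_{I_j}} w\,dA \;\le\; [uw]_{\B_1(w)} \cdot \essinf_{Q_{I_j}}(uw) \int_{Q_{I_j}} w\,dA.
\]
Combining this with the strict inequality from the maximal decomposition,
\[
\int_{Q_{I_j}} w\,dA < \frac{1}{\lambda} \int_{Q_{I_j}} |f|\,w\,dA,
\]
and pulling the essential infimum of $uw$ inside the integral yields
\[
\essinf_{Q_{I_j}}(uw) \int_{Q_{I_j}} |f|\,w\,dA \;\le\; \int_{Q_{I_j}} |f|\,(uw)\,w\,dA = \int_{Q_{I_j}} |f|\,uv\,dA.
\]
Chaining these inequalities gives the per-box bound
\[
uv(Q_{I_j}) \;\le\; \frac{[uw]_{\B_1(w)}}{\lambda} \int_{Q_{I_j}} |f|\,uv\,dA.
\]

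Finally, summing over the disjoint family $\{Q_{I_j}\}$ and using $E_\lambda = \bigsqcup_j Q_{I_j}$ delivers \eqref{MaximalWeak}. There is no real obstacle here: the only non-bookkeeping step is recognizing that $uv = (uw)\,w$ is precisely the decomposition that lets the $\B_1(w)$ characteristic convert a $w$-average of $|f|$ into a $uv$-average. The appeal to the maximal covering argument is the piece the authors say they omit, and it is indeed standard.
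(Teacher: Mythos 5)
Your proof is correct and is precisely the standard maximal covering argument the paper alludes to and omits: decompose the level set into maximal dyadic Carleson boxes on which the $w$-average of $|f|$ exceeds $\lambda$, then use the factorization $uv=(uw)w$ together with the $\B_1(w)$ condition to convert the $w$-average bound into the $uv$-measure bound box by box. Since the paper gives no proof beyond this description, there is nothing to compare further.
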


Next, we compute the analogue of \eqref{e:transform} for the weak-type estimate.
\begin{proposition}\label{prop:weak-transform} For each $\lambda>0$,
	\begin{equation}\label{e:weak-transform} \begin{aligned} &\sup_{ \norm {f}_{L^1(\Omega,\sigma)} =1} \sigma\left( \left\{ z \in \Omega : \abs{\BP_{\Omega} f(z)} > \lambda \right\} \right) \\
	= &\sup_{ \norm {g}_{L^1(\mathbb{D},uw)} =1} uv \left( \left\{ \zeta \in \mathbb D : \abs{ w(\zeta)^{-1} \BP_{\mathbb D} g(\zeta)} > \lambda \right\} \right) \end{aligned} \end{equation}
\end{proposition}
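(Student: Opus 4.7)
The plan is to realize both sides as two faces of the same conformal change of variables, relying on the standard transformation rule for the Bergman projection. Recall that if $\psi:\mathbb D\to\Omega$ is conformal, then the Bergman kernels transform as $K_\Omega(\psi(\zeta),\psi(\eta))\,\psi'(\zeta)\,\overline{\psi'(\eta)}=K_{\mathbb D}(\zeta,\eta)$, and a change of variable $\eta\mapsto w=\psi(\eta)$ in the integral defining $\BP_\Omega$ yields the pointwise identity
\begin{equation}\label{e:BPtrans}
\psi'(\zeta)\,\BP_\Omega f(\psi(\zeta))=\BP_{\mathbb D}\bigl((f\circ\psi)\,\psi'\bigr)(\zeta),\qquad \zeta\in\mathbb D.
\end{equation}
This is the analogue of \eqref{e:transform} at the level of functions rather than norms, and it is what allows the multiplier $w=|\psi'|$ to appear as an outside factor of $\BP_{\mathbb D}g$.

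Next I would set up the bijection between source spaces. Given $f$ on $\Omega$, let $g=(f\circ\psi)\,\psi'$ on $\mathbb D$; since $\psi'$ is non-vanishing on $\mathbb D$, the map $f\mapsto g$ is a bijection from measurable functions on $\Omega$ to measurable functions on $\mathbb D$ with inverse $g\mapsto (g\circ\psi^{-1})/(\psi'\circ\psi^{-1})$. The change of variable $z=\psi(\zeta)$ with $dA(z)=|\psi'(\zeta)|^2\,dA(\zeta)=v(\zeta)\,dA(\zeta)$ gives
\begin{equation*}
\|g\|_{L^1(\mathbb D,uw)}=\int_{\mathbb D}|f\circ\psi|\,|\psi'|\,(\sigma\circ\psi)\,|\psi'|\,dA=\int_\Omega|f|\,\sigma\,dA=\|f\|_{L^1(\Omega,\sigma)},
\end{equation*}
so $f\mapsto g$ is an isometry of $L^1(\Omega,\sigma)$ onto $L^1(\mathbb D,uw)$.

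Then I would transport the level sets. Using \eqref{e:BPtrans} and $w=|\psi'|$,
\begin{equation*}
\bigl|\BP_\Omega f(\psi(\zeta))\bigr|=w(\zeta)^{-1}\bigl|\BP_{\mathbb D}g(\zeta)\bigr|,
\end{equation*}
so the preimage under $\psi$ of $\{z\in\Omega:|\BP_\Omega f(z)|>\lambda\}$ is exactly $\{\zeta\in\mathbb D:w(\zeta)^{-1}|\BP_{\mathbb D}g(\zeta)|>\lambda\}$. Applying the same change of variable to the measure $\sigma\,dA$ shows $\sigma(\psi(E))=\int_E uv\,dA=(uv)(E)$ for every Borel $E\subset\mathbb D$, hence
\begin{equation*}
\sigma\bigl(\{|\BP_\Omega f|>\lambda\}\bigr)=(uv)\bigl(\{w^{-1}|\BP_{\mathbb D}g|>\lambda\}\bigr).
\end{equation*}
Taking the supremum over $\|f\|_{L^1(\Omega,\sigma)}=1$ on the left and using the norm-preserving bijection $f\leftrightarrow g$ to rewrite it as a supremum over $\|g\|_{L^1(\mathbb D,uw)}=1$ on the right yields \eqref{e:weak-transform}.

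There is no real obstacle here; the only thing to be careful about is bookkeeping the two different roles of $|\psi'|$: one factor of $|\psi'|^2=v$ comes from the Jacobian of $\psi$ and is absorbed into the underlying measure, while one extra factor of $|\psi'|=w$ is produced by the multiplier $\psi'$ in \eqref{e:BPtrans} and must either be grouped with the measure (giving $uw$ on the source side) or left as the multiplier $w^{-1}$ acting on $\BP_{\mathbb D}g$ on the target side. This is exactly the asymmetric phenomenon, alluded to just before the statement, that distinguishes the weak-type formulation from the $L^p$ one.
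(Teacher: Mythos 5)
Your proof is correct and follows essentially the same route as the paper's: both define $g=(f\circ\psi)\psi'$, use the transformation law for the Bergman kernel to identify the level sets, and change variables to match the $L^1(\Omega,\sigma)$ norm with the $L^1(\mathbb D,uw)$ norm and the measure $\sigma$ of the superlevel set with the $uv$-measure of its preimage. Your explicit remark that $f\mapsto g$ is a bijection (so both suprema range over the same data) is a useful clarification that the paper leaves implicit.
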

\begin{proof}
Let $\lambda>0$, $f \in L^1(\Omega,\sigma)$ with unit norm, and set 
	\[ E= \left\{ z \in \Omega:|\BP_{\Omega} f(z)|>\lambda \right\}.\] 
Let $g: \mathbb{D} \rightarrow \mathbb{C}$ be defined by $g=(f \circ \psi)\cdot \psi'$. Thus, by change of variable,
\[ 1=\int_{\Omega} \abs{f} \sigma \, dA= \int_{\mathbb{D}} \abs{g} u \abs{\psi'} \, dA, \quad 
 \sigma\left( E \right) = \int_{\psi^{-1}(E)} u\abs{\psi'}^2 \, dA.\]
Using the transformation law for the Bergman kernel (see for example \cite[p. 72]{lanzani-stein-2004}), one can conclude
\[\psi^{-1}(E) = \left\{\zeta \in \mathbb{D}: \, \frac{|\BP_{\mathbb{D}}g(\zeta)|}{ \abs{\psi'(\zeta)} } > \lambda \right\} .\]
\end{proof}

Equipped with Propositions \ref{prop:weak-transform} and \ref{MaximalWeak}, we are ready to prove Theorem \ref{thm:weak}.  First, we claim
	\begin{equation}\label{e:uwB1} \max\left\{ [u w]_{\B_1(w)},[u w]_{\B_1} \right\} \le [v]_{\B_1} [\sigma]_{\B_1(\Omega)}. \end{equation}
The estimate in \eqref{e:uwB1} for $[u w]_{\B_1}$ follows from \eqref{e:introB1Bp} with $p=1$ and a slight modification will show the estimate for $[u w]_{\B_1(w)}.$ 
Indeed, there holds for each $Q_I$,
\begin{align*}
[\sigma]_{\B_1(\Omega)} [v]_{\B_1}& \geq \frac{[v]_{\B_1}}{\int_{Q_I} v \, dA} \int_{Q_I} u v \, dA\cdot \|u^{-1} \|_{L^\infty(Q_I)}\\
& \geq \frac{1}{\inf_{Q_I} v} \avg{uv}_{Q_I}\|u^{-1} \|_{L^\infty(Q_I)}\\
& \geq \frac{1}{(\inf_{Q_I} v)^{1/2}} \avg{uv}_{Q_I}\|u^{-1} v^{-1/2} \|_{L^\infty(Q_I)}\\
& = \frac{\avg{ w }_{Q_I}}{\inf_{Q_I} w } \avg{uw}_{w,Q_I} \|u^{-1} w^{-1} \|_{L^\infty(Q_I)} \\
& \geq \avg{uw}_{w,Q_I}\|u^{-1} w^{-1} \|_{L^\infty(Q_I)}.
\end{align*}
Taking the supremum over $I$ establishes \eqref{e:uwB1}. Therefore, by \eqref{e:uwB1}, Proposition \ref{prop:weak-transform}, and the trivial estimates $c_w \le \brk{w}_{\B_1} \le \brk{v}_{\B_1}$, it is enough to show
\begin{equation}
uv \left( \left\{\zeta \in \mathbb{D}: \, \frac{|\BP_{\mathbb{D}}g(\zeta)|}{ w(\zeta)} > \lambda \right\} \right) \lesssim \frac{\left( c_w [w]_{\B_1} \right)^3 [uw]_{\B_1(w)}^{2}[uw]_{\B_1}}{\lambda} \int_{\mathbb{D}} |g| \, u w
\label{WeakTypePullback}
\end{equation}
Moreover, $\BP_{\mathbb{D}}$ is pointwise   bounded by   $\mathcal{A}_{\mathcal{D}_1}+  \mathcal{A}_{\mathcal{D}_2}$, defined by \eqref{e:A} with $v \equiv 1$
(see \cite[Lemma 5]{rtw2017}), so it is enough to prove \eqref{WeakTypePullback} for $\mathcal{A}_{\mathcal{D}}$ in place of $\BP_{\mathbb{D}}$ for a generic dyadic grid $\mathcal D$ and for $g$ positive. 

To this end, let $E_\lambda= \{ \zeta \in \mathbb D : M^{\mathcal{D}}_{w}(g w^{-1} )(\zeta)>\lambda\}$ and write $E_\lambda$ as a union of disjoint maximal Carleson boxes $Q_{\lambda,k}$ and let $\hat Q_{\lambda,k}$ denote the dyadic parents. Write
\[g=g_1+g_2, \quad g_1= g \chi_{E_\lambda^c}, \quad g_2= g \chi_{E_\lambda}.\]
Notice that if $I \in \mathcal{D}$, then either there exists $k$ so that $Q_I \subset Q_{\lambda,k}$ or $Q_I$ intersects $E_\lambda^c.$ 
In the latter case, note by definition of $Q_{\lambda,k}$ and the doubling of $w$, we have 
\begin{equation}    \avg{ g_1w^{-1} }_{w,T_I} 
\leq c_w \avg{ g_1 w^{-1} }_{w,Q_I} \le c_w \lambda, \label{GoodFunctionBound} \end{equation}
In the former case,
$ \avg{g_1 w^{-1} }_{w,T_I}=0$ since $Q_{\lambda,k} \subset E_\lambda$, so \eqref{GoodFunctionBound} holds for all $I \in \mathcal D$.
Similarly, for the function $g_2$, by maximality of $Q_{\lambda,k}$ and doubling of $w$,
\begin{equation} \avg{ g_2 w^{-1} }_{w,Q_{\lambda,k}}  
\leq c_w \avg{ g_2 w^{-1} }_{w, \hat Q_{\lambda,k}} 
< c_w \lambda.\label{GoodFunctionBound2} \end{equation}
Write
\begin{align*}
uv \left( \left\{\zeta \in \mathbb{D}: \, \frac{|\mathcal{A}_{\mathcal{D}}g(\zeta)|}{ w(\zeta)} > \lambda \right\} \right) & \leq uv \left( \left\{\zeta \in \mathbb{D}: \, \frac{|\mathcal{A}_{\mathcal{D}}g_1(\zeta)|}{ w(\zeta)} > \frac{\lambda}{2} \right\} \right) \\
&\quad + uv \left( \left\{\zeta \in E_\lambda^c: \, \frac{|\mathcal{A}_{\mathcal{D}} g_2(\zeta)|}{ w(\zeta)} > \frac{\lambda}{2} \right\} \right)\\
&\quad + uv(E_\lambda) \\
& := (I) + (II) +(III).
\end{align*}
$(III)$ is controlled by $\frac{ [uw]_{\B_1(w)}}{\lambda} \int_{\mathbb{D}} |g| \, u w$ using Proposition \ref{MaximalWeak}. 
To control $(I)$ and $(II)$, note that using the $\B_1$ condition for $w$,
\[\mathcal{A}_{\mathcal{D}}g_1(\zeta) w^{-1} (\zeta) \leq [w]_{\B_1} \sum_{I \in \mathcal{D}} \frac{\int_{Q_I}g_1 \, dA}{\int_{Q_I} w \, dA} \chi_{Q_I}(\zeta) = [w]_{\B_1} \mathcal{A}_{\mathcal{D}, w}(g_1 w^{-1} )(\zeta)\]
Recalling the dyadic regularization from \eqref{e:reg}, set $\widetilde {g_1} = (g_1)_{\mathcal D}$. By Lemma \ref{lemma:reg}.ii, $\int_{Q_I} \tilde g_1 \, dA = \int_{Q_I} g_1 \, dA$ for each $I \in \mathcal D$ whence
\[  \mathcal{A}_{\mathcal{D}, w}(g_1 w^{-1} ) = \mathcal{A}_{\mathcal{D}, w}(\widetilde{ g_1} w^{-1}).\]
Furthermore, for $I \in \mathcal D$ and $\zeta \in T_I \subset Q_I$, $w^{-1}(\zeta) \le [w]_{\B_1}\left( \int_{T_I} w \, dA \right)^{-1}$, which, combined with \eqref{GoodFunctionBound}, yields
\[ \widetilde{g_1} w^{-1} \le [w]_{\B_1}\sum_{I \in \mathcal{D}} \langle g_1 w^{-1} \rangle_{w,T_I} \chi_{T_I} \leq c_w \times [w]_{\B_1} \times \lambda.\] 
On the other hand, defining $ \widetilde{g_2}= \sum_k \langle g_2 \rangle_{Q_{\lambda,k}} \chi_{Q_{\lambda,k}} $, \eqref{GoodFunctionBound2} gives $\widetilde{g_2}w^{-1} \leq c_w[w]_{\B_1}\lambda$ and by the support condition on $g_2$, for $\zeta \in E_\lambda^c$ there holds 
\begin{align*}
\mathcal{A}_{\mathcal{D}}g_2(\zeta) w^{-1} (\zeta) & \leq [w]_{\B_1} \sum_{I \in \mathcal{D}} \frac{\int_{Q_I}g_2}{\int_{Q_I} w} \chi_{Q_I}(\zeta)\\
& \leq [w]_{\B_1}\sum_{k} \sum_{\substack{I\in \mathcal{D}\\ Q_I \supset Q_{\lambda_k}}}\frac{\int_{Q_I}g_2}{\int_{Q_I} w} \chi_{Q_I}(\zeta)\\
& \leq [w]_{\B_1} \mathcal{A}_{\mathcal{D}, w}(\widetilde{g_2} w^{-1} )(\zeta).
\end{align*}

Therefore,
	\[ \begin{aligned} (I) + (II) &\lesssim \sum_{j=1,2} \frac{\left([w]_{\B_1}\right)^2}{\lambda^2} \int_{\mathbb{D}} \left|\mathcal{A}_{\mathcal{D}, w}(\widetilde{g_j} w^{-1} )\right|^2 \, uv \, dA \\
		&\lesssim \sum_{j=1,2} \frac{\left(c_w [w]_{\B_1} [u w]_{\B_2(w)} \right)^2}{\lambda^2} \int_{\mathbb{D}} \left|\widetilde{g_j} w^{-1} \right|^2 \, uv \, dA \\
		&\lesssim \sum_{j=1,2} \frac{ \left(c_w [w]_{\B_1}\right)^3 \left( [u w]_{\B_2(w)} \right)^2}{\lambda} \int_{\mathbb{D}} \left|\widetilde{g_j} w^{-1}\right| \, uv \, dA, \end{aligned} \]
where the second inequality follows from \eqref{e:Aw} with $p=2$, $w \mapsto uw$, and $v \mapsto w$.
The proof of \eqref{WeakTypePullback} is concluded by again invoking the fact $uw \in \B_1$ to obtain 
	\[ \int_F \avg{h}_{F} uw \, dA \le \brk{uw}_{\B_1} \int_{F} |h|uw \, dA, \quad F \in \{Q_I,T_I\}_{I \in \mathcal D},\]
for any $h \in L^1(uw)$, whence
	\[ \int_{\mathbb{D}} \left|\widetilde{g_j} w^{-1}\right| \, uv \, dA \lesssim \brk{uw}_{\B_1} \int_{\mathbb{D}} \left\vert g \right\vert \, uw \, dA.\]

\begin{bibdiv}
\begin{biblist}

\bib{astala-book}{book}{
      author={Astala, Kari},
      author={Iwaniec, Tadeusz},
      author={Martin, Gaven},
       title={Elliptic partial differential equations and quasiconformal
  mappings in the plane},
      series={Princeton Mathematical Series},
   publisher={Princeton University Press, Princeton, NJ},
        date={2009},
      volume={48},
        ISBN={978-0-691-13777-3},
}

\bib{APR}{article}{
      author={Aleman, A.},
      author={Pott, S.},
      author={Reguera, M.~C.},
       title={Characterizations of a limiting class {$B_\infty$} of
  {B}\'{e}koll\'{e}-{B}onami weights},
        date={2019},
     journal={Rev. Mat. Iberoam.},
      volume={35},
      number={6},
       pages={1677\ndash 1692},
}

\bib{bekolle1986}{article}{
      author={B\'{e}koll\'{e}, D.},
       title={Projections sur des espaces de fonctions holomorphes dans des
  domaines plans},
        date={1986},
     journal={Canad. J. Math.},
      volume={38},
      number={1},
       pages={127\ndash 157},
}

\bib{bekollebonami1978}{article}{
      author={Bekoll\'{e}, David},
      author={Bonami, Aline},
       title={In\'{e}galit\'{e}s \`a poids pour le noyau de {B}ergman},
        date={1978},
     journal={C. R. Acad. Sci. Paris S\'{e}r. A-B},
      volume={286},
      number={18},
       pages={A775\ndash A778},
}

\bib{bekolle1982}{article}{
      author={Bekoll\'{e}, David},
       title={In\'{e}galit\'{e} \`a poids pour le projecteur de {B}ergman dans
  la boule unit\'{e} de {${\bf C}^{n}$}},
        date={1981/82},
     journal={Studia Math.},
      volume={71},
      number={3},
       pages={305\ndash 323},
}

\bib{bonami82}{article}{
      author={Bonami, Aline},
      author={Lohou\'{e}, No\"{e}l},
       title={Projecteurs de bergman et szego pour une classe de domaines
  faiblement pseudo-convexes et estimations {$L^{p}$}},
        date={1982},
     journal={Compositio Math.},
      volume={46},
      number={2},
       pages={159\ndash 226},
}

\bib{borichev04}{article}{
      author={Borichev, Alexander},
       title={On the {B}ekoll\'{e}-{B}onami condition},
        date={2004},
     journal={Math. Ann.},
      volume={328},
      number={3},
       pages={389\ndash 398},
}

\bib{burbea80}{article}{
      author={Burbea, Jacob},
       title={The {B}ergman projection on weighted norm spaces},
        date={1980},
     journal={Canadian J. Math.},
      volume={32},
      number={4},
       pages={979\ndash 986},
}

\bib{burbea83}{article}{
      author={Burbea, Jacob},
       title={The {B}ergman and {S}chiffer transforms on weighted norm spaces},
        date={1983},
     journal={Studia Math.},
      volume={76},
      number={2},
       pages={153\ndash 161},
}

\bib{conde-alonso2016pointwise}{article}{
      author={Conde-Alonso, Jos{\'e}~M.},
      author={Rey, Guillermo},
       title={A pointwise estimate for positive dyadic shifts and some
  applications},
        date={2016},
     journal={Math. Ann.},
      volume={365},
      number={3},
       pages={1111\ndash 1135},
}

\bib{dcu-martell}{article}{
      author={Cruz-Uribe, D.},
      author={Martell, J.~M.},
      author={P\'{e}rez, C.},
       title={Weighted weak-type inequalities and a conjecture of {S}awyer},
        date={2005},
     journal={Int. Math. Res. Not.},
      number={30},
       pages={1849\ndash 1871},
}

\bib{cmp2011}{book}{
      author={Cruz-Uribe, David~V.},
      author={Martell, Jos\'{e}~Maria},
      author={P\'{e}rez, Carlos},
       title={Weights, extrapolation and the theory of {R}ubio de {F}rancia},
      series={Operator Theory: Advances and Applications},
   publisher={Birkh\"{a}user/Springer Basel AG, Basel},
        date={2011},
      volume={215},
}

\bib{dayan2023}{article}{
      author={Dayan, A.},
      author={Llinares, A.},
      author={Perfekt, K.M.},
       title={Restrictions of B\'{e}koll\'{e}--Bonami weights and Bloch functions},
        date={2023},
     journal={preprint arXiv:2308.04859},
}

\bib{gc-book}{book}{
      author={Garc\'{\i}a-Cuerva, Jos\'{e}},
      author={Rubio~de Francia, Jos\'{e}~L.},
       title={Weighted norm inequalities and related topics},
      series={North-Holland Mathematics Studies},
   publisher={North-Holland Publishing Co., Amsterdam},
        date={1985},
      volume={116},
        ISBN={0-444-87804-1},
        note={Notas de Matem\'{a}tica [Mathematical Notes], 104},
}

\bib{gehring}{article}{
      author={Gehring, F.~W.},
       title={The {$L^p$}-integrability of the partial derivatives of a
  quasiconformal mapping},
        date={1973},
     journal={Acta Math.},
      volume={130},
      number={1},
       pages={265\ndash 277},
}

\bib{hedenmalm}{article}{
      author={Hedenmalm, H.},
       title={The dual of a {B}ergman space on simply connected domains},
        date={2002},
     journal={J. Anal. Math.},
      volume={88},
       pages={311\ndash 335},
        note={Dedicated to the memory of Tom Wolff},
}

\bib{huowick2022}{article}{
      author={Huo, Z.},
      author={Wick, B.~D.},
       title={Weighted estimates of the bergman projection with matrix
  weights},
        date={2022},
     journal={preprint arXiv:2012.13810},
}

\bib{hytonen12}{article}{
      author={Hyt{\"o}nen, Tuomas},
       title={The sharp weighted bound for general {C}alder{\'o}n-{Z}ygmund
  operators},
        date={2012},
     journal={Ann. Math.},
       pages={1473\ndash 1506},
}

\bib{FJohn}{article}{
   author={John, Fritz},
   title={Rotation and strain},
   journal={Comm. Pure Appl. Math.},
   volume={14},
   date={1961},
   pages={391--413},
}

\bib{JN}{article}{
      author={Johnson, R.},
      author={Neugebauer, C.~J.},
       title={Homeomorphisms preserving {$A_p$}},
        date={1987},
     journal={Rev. Mat. Iberoam.},
      volume={3},
      number={2},
       pages={249\ndash 273},
}

\bib{jones80}{article}{
      author={Jones, Peter~W.},
       title={Extension theorems for {BMO}},
        date={1980},
     journal={Indiana Univ. Math. J.},
      volume={29},
      number={1},
       pages={41\ndash 66},
}

\bib{jones81}{article}{
      author={Jones, Peter~W.},
       title={Quasiconformal mappings and extendability of functions in
  {S}obolev spaces},
        date={1981},
     journal={Acta Math.},
      volume={147},
      number={1-2},
       pages={71\ndash 88},
}

\bib{jones-hom}{article}{
      author={Jones, Peter~W},
       title={Homeomorphisms of the line which preserve bmo},
        date={1983},
     journal={Arkiv f{\"o}r Matematik},
      volume={21},
      number={1-2},
       pages={229\ndash 231},
}

\bib{lacey-a2}{article}{
      author={Lacey, Michael~T.},
       title={An elementary proof of the ${A}_2$ bound},
        date={2017},
     journal={Isr. J. Math.},
      volume={217},
      number={1},
       pages={181\ndash 195},
}

\bib{lanzani-stein-2004}{article}{
   author={Lanzani, Loredana},
   author={Stein, Elias M.},
   title={Szeg\"{o} and Bergman projections on non-smooth planar domains},
   journal={J. Geom. Anal.},
   volume={14},
   date={2004},
   number={1},
   pages={63--86},
   
}

\bib{lerner-a2}{article}{
      author={Lerner, Andrei~K.},
       title={A simple proof of the ${A}_2$ conjecture},
        date={2013},
     journal={Int. Math. Res. Not.},
      volume={2013},
      number={14},
       pages={3159\ndash 3170},
}

\bib{lerner2019intuitive}{article}{
      author={Lerner, Andrei~K.},
      author={Nazarov, Fedor},
       title={Intuitive dyadic calculus: the basics},
        date={2019},
     journal={Exp. Math.},
      volume={37},
      number={3},
       pages={225\ndash 265},
}

\bib{limani2023}{article}{
   author={Limani, Adem},
   author={Nicolau, Artur},
   title={Bloch functions and Bekoll\'{e}-Bonami weights},
   journal={Indiana Univ. Math. J.},
   volume={72},
   date={2023},
   number={2},
   pages={381--407},
}

\bib{moen2012}{article}{
      author={Moen, Kabe},
       title={Sharp weighted bounds without testing or extrapolation},
        date={2012},
        ISSN={0003-889X},
     journal={Arch. Math. (Basel)},
      volume={99},
      number={5},
       pages={457\ndash 466}
}

\bib{muckenhoupt1977}{article}{
      author={Muckenhoupt, Benjamin},
      author={Wheeden, Richard~L.},
       title={Some weighted weak-type inequalities for the {H}ardy-{L}ittlewood
  maximal function and the {H}ilbert transform},
        date={1977},
     journal={Indiana Univ. Math. J.},
      volume={26},
      number={5},
       pages={801\ndash 816},
}

\bib{pommerenke-VMO}{article}{
      author={Pommerenke, Ch.},
       title={On univalent functions, {B}loch functions and {VMOA}},
        date={1978},
     journal={Math. Ann.},
      volume={236},
      number={3},
       pages={199\ndash 208},
}

\bib{pommerenke-book}{book}{
      author={Pommerenke, Ch.},
       title={Boundary behaviour of conformal maps},
      series={Grundlehren der mathematischen Wissenschaften [Fundamental
  Principles of Mathematical Sciences]},
   publisher={Springer-Verlag, Berlin},
        date={1992},
      volume={299},
}

\bib{PR}{article}{
      author={Pott, S.},
      author={Reguera, M.~C.},
       title={Sharp {B}\'{e}koll\'{e} estimates for the {B}ergman projection},
        date={2013},
     journal={J. Funct. Anal.},
      volume={265},
      number={12},
       pages={3233\ndash 3244},
}

\bib{rtw2017}{article}{
      author={Rahm, Rob},
      author={Tchoundja, Edgar},
      author={Wick, Brett~D.},
       title={Weighted estimates for the {B}erezin transform and {B}ergman
  projection on the unit ball},
        date={2017},
     journal={Math. Z.},
      volume={286},
      number={3-4},
       pages={1465\ndash 1478},
}

\bib{sawyer-weak}{article}{
      author={Sawyer, E.},
       title={A weighted weak type inequality for the maximal function},
        date={1985},
     journal={Proc. Amer. Math. Soc.},
      volume={93},
      number={4},
       pages={610\ndash 614},
}

\bib{stockdale2023weighted}{article}{
      author={Stockdale, Cody~B.},
      author={Wagner, Nathan~A.},
       title={Weighted theory of {T}oeplitz operators on the {B}ergman space},
        date={2023},
     journal={Math. Z.},
      volume={305},
      number={10},
}

\bib{wagner22}{article}{
   author={Wagner, Nathan A.},
   title={Some results for the Szeg\H{o} and Bergman projections on planar
   domains},
   conference={
      title={Recent developments in harmonic analysis and its applications},
   },
   book={
      series={Contemp. Math.},
      volume={792},
      publisher={Amer. Math. Soc., [Providence], RI},
   },
   isbn={978-1-4704-7140-8},
   date={2024},
   pages={101--123},
}

\bib{zhu2007}{book}{
      author={Zhu, Kehe},
       title={Operator theory in function spaces},
     edition={Second},
      series={Mathematical Surveys and Monographs},
   publisher={American Mathematical Society, Providence, RI},
        date={2007},
      volume={138},
}

\end{biblist}
\end{bibdiv}

\end{document}